\newtheorem*{introproposition}{Reduction Lemma}
\newtheorem*{introtheorem}{Two Universal Properties}
\newtheorem{theorem}{Theorem}
\newtheorem{lemma}{Lemma}
\newtheorem{sublemma}{Sublemma}
\newtheorem{corollary}{Corollary}
\newtheorem{definition}{Definition}
\newtheorem{remark}{Remark}
\newtheorem{convention}{Convention}
\numberwithin{equation}{section}
\numberwithin{theorem}{section}
\numberwithin{corollary}{section}
\numberwithin{proposition}{section}
\numberwithin{definition}{section}
\numberwithin{lemma}{section}
\numberwithin{sublemma}{section}
\numberwithin{remark}{section}
\numberwithin{convention}{section}
\begin{document}
\title[Isoparametric Hypersurface]{Isoparametric hypersurfaces with four principal curvatures, IV}
\author{Quo-Shin Chi}

\address{Department of Mathematics, Washington University, St. Louis, MO 63130}
\email{chi@math.wustl.edu}

\begin{abstract} We prove that an isoparametric hypersurface with four principal curvatures and multiplicity pair $(7,8)$ is either the one constructed by Ozeki and Takeuchi, or one of the two constructed by Ferus, Karcher, and M\"{u}nzner. This completes the classification of isoparametric hypersurfaces in spheres that \'{E}. Cartan initiated in the late 1930s.
\end{abstract}
\keywords{Isoparametric hypersurfaces}
\subjclass{Primary 53C40}
\maketitle
\pagestyle{myheadings}
\markboth{QUO-SHIN CHI}{ISOPARAMETRIC HYPERSURFACES}

\section{Introduction} 
The class of isoparametric hypersurfaces with four principal curvatures and multiplicity pair $(7,8)$ in $S^{31}$ is the only one that has remained unclassified~\cite{CCJ},~\cite{Chiq},~\cite{Chiq4},~\cite{DN},~\cite{Mi2},~\cite{Mi3}. The subtlety of a possible classification suggests itself when one looks into the three existing examples that are all inhomogeneous, where the octonion algebra is in full force to interplay with the underlying geometric structure, in contrast to the three other anomalous classes of respective multiplicity pairs $(3,4), (4,5),$ and $(6,9)$, where one category (out of at most two) of each class is homogeneous that carries more manageable structural data for the classification~\cite{CCJ},~\cite{Chiq},~\cite{Chiq4}. 

From an algebraic point of view, a classification must begin with classifying the orthogonal multiplications of type $[7,8,15]$, i.e., classifying those bilinear maps 
$$
F: {\mathbb R}^7\times {\mathbb R}^8\rightarrow {\mathbb R}^{15}
$$
satisfying $|F(x,y)|=|F(x)||F(y)|$, or more conveniently for our setup, classifying the following quadratic composition formula of type $[7,8,15]$
$$
(x_1^2+\cdots+x_7^2)(y_1^2+\cdots+y_8^2)=z_1^2+\cdots+z_{15}^2,
$$
where $z_1,\cdots,z_{15}$ are bilinear in $x_1,\cdots,x_7$ and $y_1,\cdots,y_8$, as can be seen by a glance at the first two identities in~\eqref{conv} below. Indeed, the composition formula is equivalent to the Hurwitz matrix equations
$$
F_a F_b^{tr}+F_b F_a^{tr}=2\delta_{ab} I_8,\quad 1\leq a,b\leq 7,
$$
where 
$$
F_a:= \begin{pmatrix} A_a&\sqrt{2}B_a\end{pmatrix}
$$
for $A_a$ of size 8-by-8 and $B_a$ of sixe 8-by-7. With $F_a$ in place one next solves the same problem for another set of seven matrices 
$$
G_a:=\begin{pmatrix} A_a^{tr}&\sqrt{2}C_a\end{pmatrix}
$$
for some $C_a$ of size 8-by-7. Then $A_a,B_a,C_a$ are candidates to form the shape operator $S_a$, in the normal $a$-direction, of the shape operator of the focal manifold $M_{+}$ of the isoparametric hypersurface of the smaller codimension ($=8$) in the sphere $S^{31}$, given by
$$
S_0=\begin{pmatrix}Id&0&0\\0&-Id&0\\0&0&0\end{pmatrix},\quad
S_a=\begin{pmatrix}0&A_a&B_a\\A_a^{tr}&0&C_a\\B_a^{tr}&C_a^{tr}
&0\end{pmatrix},\quad 1\leq a\leq 7.
$$
The possible choices of $A_a,B_a,C_a$ are further restricted because they must verify that the eigenvalues of $S_n$ are 0 and $\pm 1$ in all normal directions $n$ so that $(S_n)^3=S_n$. Algebraically, this says 
$$
(\sum_{a=0}^7c_aS_a)^3=(\sum_{a=0}^7c_a^2)(\sum_{a=0}^7 c_aS_a),\quad\forall c_0\cdots,c_7\in{\mathbb R},
$$
that an isoparametric hypersurface with four principal curvatures and multiplicity pair $(7,8)$ enjoys, which simplifies to those equations in~\eqref{conv} below, plus a few more not listed (see~\cite[II, p. 45]{OT}). This accounts for the possible second fundamental form of the focal manifold and constitutes the first three of the ten defining identities of an isoparametric hypersurface~\cite[I, p. 523]{OT}. One must then pin down the third fundamental form of the focal manifold that is convoluted with the second fundamental form in the seven remaining identities.

 For instance, one can take $B_a=C_a=0$ in all $F_a$ and $G_a$, which is equivalent to Condition A of Ozeki and Takeuchi~\cite[I]{OT} to the effect that there is a point $p\in M_{+}$ at which the shape operators in all normal directions share the same kernel. Then $A_a$ arise from the left or right multiplication of the octonion algebra. Since the two octonion multiplications are inequivalent, it results in two distinct second fundamental forms and three distinct third fundamental forms that give rise to the three inhomogeneous examples in the case when the multiplicity pair is $(7,8)$. This is the approach taken in~\cite{Chi} to give a different proof of a result in~\cite{DN1} that states that the existence of a point of Condition A implies that the isoparametric hypersurface is one of the three inhomogeneous ones.
 
In general, however, there is no known classification of the above quadratic composition formula. 

Algebraic geometry comes to the rescue. In this paper, we shall refer to our fairly detailed survey articles~\cite{Chi5},~\cite{Chi6} and the references therein for all the background material
that we employed in~\cite{CCJ},~\cite{Chiq},~\cite{Chiq4} without dwelling much on it, unless necessarily, except to remark that
the unified theme in the classification is the  
notion of normal varieties and Serre's criterion for
verifying the normality of a variety, in terms of a subtle
codimension 2 test on the generating functions of the ideal of the
variety. Its technical side we developed in~\cite{CCJ},~\cite{Chiq},~\cite{Chiq4} enabled
us to harness the components $p_0,\cdots,p_{m_{+}}$ of the second fundamental form of the focal manifold $M_{+}$ 
of the smaller codimension $1+m_{+}$ in the sphere, to gain a good global control over the codimension 2
estimate on the variety carved out by $p_0,\cdots,p_{m_{+}}$. In fact, an essential step is to study the {\it singular locus} ${\mathscr S}$
of the (complex) linear system of cones ${\mathcal C}_\lambda$
$$
c_0p_0+\cdots+c_{m_{+}}p_{m_{+}}=0
$$
as $\lambda:=[c_0:\cdots:c_{m_{+}}]$ sweeps out ${\mathbb C}P^{m_{+}}$. The codimension 2 estimate gets sharper
when we understand better how $p_0,\cdots,p_{m_{+}}$ cut the singular locus ${\mathscr S}_\lambda$ of the cone ${\mathcal C}_\lambda$,
remarking that ${\mathscr S}=\cup_{\lambda} {\mathscr S}_\lambda$. 

In~\cite{CCJ},~\cite{Chiq},~\cite{Chiq4}, we were able to classify all isoparametric hypersurfaces with four principal
curvatures, except for the case when the principal multiplicity pair is
$(m_{+},m_{-})=(7,8)$, essentially by exploring the cut between $p_0=p_2=0$ and ${\mathscr S}_\lambda$, remarking that, by symmetry, $p_0=0$ and $p_1=0$
produce the same cut into ${\mathscr S}_\lambda$. Intersection of more varieties needs to be considered for a
global codimension 2 estimate in the case when the multiplicity pair is $(7,8)$, which, however, gets untamed without an effective cutting
strategy.

To overcome this obstacle, we introduce in this paper (see Section~\ref{a}) a notion called {\bf $r$-nullity}, which generalizes
Condition A that is 0-null of Ozeki and Takeuchi, remarking that Condition A is important in the classification   
of the anomalous cases when the multiplicity pair is $(m_{+},m_{-})=(3,4),(4,5),$ or $(6,9)$. 

In fact, for Serre's codimension 2 test
it suffices to consider only those ${\mathscr S}_\lambda$ for which $\lambda=[c_0:\cdots:c_{m_{+}}]$ live in the complex hyperquadric 
$$
c_0^2+\cdots+c_{m_{+}}^2=0,
$$
so that each $\lambda$ is a 2-plane spanned by an (oriented) orthonormal pair $(n_0,n_1)$ of a normal basis
$n_0,n_1,\cdots,n_{m_{+}}$ with the corresponding $p_0,p_1,\cdots,p_{m_{+}}$~\cite{Chiq4}. 
Let $r$ be the number 
$$
r:= m_{+}-\dim(\text{kernel}(S_{n_0})\cap\text{kernel}(S_{n_1})).
$$
We say a normal basis element $n_l,l\geq 2,$ is 
{\bf $r$-null} if $p_l$ is identically zero when it is restricted to ${\mathscr S}_\lambda$. We say the normal basis $n_0,n_1,\cdots,n_{m_{+}}$
is $r$-null if $n_l$ is $r$-null for {\bf all} $l\geq 2$. 

As we shall see, a normal basis being $r$-null is the worst case scenario one can encounter in the codimension 2 estimate, since the intersection
between each $p_l=0,l\geq 2,$ and ${\mathscr S}_\lambda$ is trivial, and hence contributes nothing to the codimension 2 estimate. 

At a first glance, this algebro-geometric definition of $r$-nullity seems to lack of differential-geometric content. However, we show in Section~\ref{a} (see Lemma~\ref{rnull}) that $r$-nullity is equivalent to that all the upper left $(m_{-}-r)$-by-$(m_{+}-r)$ blocks of $B_a$ and $C_a$ vanish for $1\leq a\leq m_{+}$, so that in particular $r$-nullity holds if the generic rank of linear combinations of $B_1,\cdots,B_{m_+}$ is $r$.
It is clear now that 
Condition A is equivalent to that the normal basis is $0$-null.

We may assume the isoparametric hypersurface $M$ with multiplicity pair $(m_+,m_{-})=(7,8)$ is not the one constructed by Ozeki and Takeuchi~\cite[I]{OT}. Then we can conclude in Sections~\ref{4n} and~\ref{mirror} (see
Lemma~\ref{r} and Corollary~\ref{CA}), after a long technical preparation of placing constraints on 1-, 2-, and 3-nullity in Section~\ref{constraints} (with the help of certain codimension 2 estimates given in Appendix I) that the focal manifold $M_{+}$ is generically  
$4$-null when we are away from points of Condition A. This enables us to prove in Section~\ref{mirror} the following 

\begin{introproposition} Let $M$ be an isoparametric hypersurface with multiplicity pair $(m_{+},m_{-})=(7,8)$ not constructed by Ozeki
and Takeuchi. Given any point $p\in M$ with its unit normal 
$n$ and any vector $v$ at $p$
tangent to a curvature surface $($which is a sphere$)$ of dimension $7$, there is a $16$-dimensional Euclidean space passing through $p,n$ and $v$ such that it cuts
$M$ in a {\bf homogeneous} isoparametric hypersurface with multiplicity pair $(m_{+},m_{-})=(3,4)$. 
\end{introproposition}  

The key ingredient in establishing the reduction lemma is to look back and forth at the ``mirror'' points~\cite{Chi} of a point $(x,n)$ on the unit normal bundle of $M_{+}$ and $M_{-}$, where $M_{-}$ is the other focal manifold with larger codimension $1+m_{-}$ in the sphere. Here, by the mirror point $(x^\#,n^\#)$ of $(x,n)$ on the unit normal bundle of $M_{+}$, and the mirror point $(x^*,n^*)$ of $(x,n)$ on the unit normal bundle of $M_{-}$, we mean they are the points
$$
(x^\#,n^\#):=(n,x),\quad (x^*,n^*):=((x+n)/\sqrt{2},(x-n)/\sqrt{2}).
$$
Suffices it to say that the shape operators $S_n,S_{n^\#},$ and $S_{n^*}$ are interlocked (see~\eqref{gOOd},~\eqref{gooD},~\eqref{Good}), so that generic 4-nullity at both $x$ and $x^\#$ enables us to read off many zero blocks of $S_n, S_{n^\#},$ and $S_{n^*}$, which, when viewed at $x^*$, fits exactly in the quaternionic framework in~\cite{Chi}. Indeed, we have (see~\eqref{good}, all counterpart quantities at $x^*$ will be denoted with an extra superscript *)
$$
\aligned
&A_\alpha^*=\begin{pmatrix} 0&0\\0&\cdot\end{pmatrix},\quad B_\alpha^*=\begin{pmatrix} \cdot&0\\0&\cdot\end{pmatrix},\quad
C_\alpha^*=\begin{pmatrix} \cdot&0\\0&\cdot\end{pmatrix},\quad 1\leq \alpha\leq 4;\\
&A_\alpha^*=\begin{pmatrix} 0&\cdot\\\cdot&\cdot\end{pmatrix},\quad B_\alpha^*=\begin{pmatrix} 0&\cdot\\\cdot&\cdot\end{pmatrix},\quad
C_\alpha^*=\begin{pmatrix} 0&\cdot\\\cdot&\cdot\end{pmatrix}, \quad5\leq \alpha\leq 8,
\endaligned
$$
where the lower right blocks are all of size 4-by-4, from which the above reduction lemma follows by investigating how the upper left blocks interact with the remaining blocks through the third fundamental form of $M_{-}$.

We are half way home. To determine the remaining blocks of $S_n^*$, it is more convenient to convert the data to $M_{+}$, where now (see~\eqref{mtx})
$$
\aligned
&A_a=\begin{pmatrix}z_a&0\\0&w_a\end{pmatrix},\quad B_a=\begin{pmatrix}0&0\\0&c_a\end{pmatrix},\quad C_a=\begin{pmatrix}0&0\\0&f_a\end{pmatrix},\quad 1\leq a\leq 3,\\
&A_a=\begin{pmatrix} 0&\beta_a\\\gamma_a&\delta_a\end{pmatrix},\quad B_a=\begin{pmatrix}0&d_a\\b_a&c_a\end{pmatrix},\quad C_a=\begin{pmatrix} 0&g_a\\b_a&f_a\end{pmatrix},\quad 4\leq a\leq 7.
\endaligned
$$
An important observation to make is that $(\sqrt{2}c_a,w_a),1\leq a\leq 3,$ generate a quadratic composition formula of type $[3,4,8]$. In~\cite{CW}, the moduli space of orthogonal multiplications of type $[3,4,p], p\leq 12,$ is studied; when
it is incorporated with the data conversion between $x$ and $x^\#$, we are finally able to specify decisive characteristics of the $b_a,c_a,f_a,d_a,g_a$ blocks, to be presented in Section~\ref{sec7}. The driving force for all this to happen is the crucial step that shows the $b_a$ matrices, $4\leq a\leq 7,$ are generically of rank $\leq 2$, so that when we consider the linear combination 
$$
b(x):=x_1b_4+\cdots+x_4 b_7
$$ 
over the polynomial ring ${\mathbb R}[x_1,\cdots,x_4]$, it perfectly fits in the Koszul complex~\cite[p. 423]{Ei} to let us arrive at the important conclusion that all $b_a,1\leq a\leq 7,$ have a common zero column (see Lemma~\ref{ALG}, Corollary~\ref{cccor}, and Corollary~\ref{corollary10}). We phrase it in the following context.

\begin{introtheorem} If the isoparametric hypersurface with multiplicity pair $(m_{+},m_{-})=(7,8)$ is not the one constructed by Ozeki and Takeuchi, then at each point of $M_{+}$ the intersection of kernels of shape operators in all normal directions, or equivalently, of kernels of all $B_a,1\leq a\leq 7,$ is at least $1$-dimensional, and moreover, it is $1$-dimensional at a generic point. Furthermore, the intersection of kernels of all $B_a^{tr},1\leq a\leq 7,$ is generically $2$-dimensional. The statement also holds for $C_a, 1\leq a\leq 7.$
\end{introtheorem}

These two properties, pivotal for the classification in this paper, can be seen to hold true for the two isoparametric hypersurfaces constructed by Ferus, Karcher and M\"{u}nzner through straightforward calculations in Section~\ref{subsec}
to be given as motivation for subsequent development.

Without plunging into technical details, we point out that, with the characteristic features of $A_a, B_a, C_a,1\leq a\leq 7,$ pinpointed, we shall be able to demonstrate in Section 7 that we can come up with a {\bf Clifford frame} over $M_{-}$ (see~\eqref{cliff}) in which the second universal property above plays a vital role. In essence, a Clifford frame~\cite{CCJ},~\cite{Ch} gives rise to an 8-dimensional sphere worth of intrinsic isometries of $M_{-}$ which can be extended to ambient $Spin(9)$ isometries, and hence the hypersurface is one of the two constructed by Ferus, Karcher, and M\"{u}nzner, if it is not the one constructed by Ozeki and Takeuchi.

It is noteworthy that in recent years there has been much effort to investigate isoparametric foliations on Riemannian manifolds other than the standard spheres, such as exotic spheres~\cite{GT},~\cite{QT}, compact manifolds of positive  scalar curvature~\cite{TX}, complex and quaternionic projective spaces~\cite{DV}, ~\cite{DG}, Damek-Ricci spaces~\cite{DD}, and more generally singular foliations on Riemannian manifolds~\cite{GG},~\cite{GR} (and the references therein). Moreover, since isoparametric hypersurfaces form an ideal testing ground to furnish examples and counterexamples, the Yau conjecture on the first eigenvalues of minimal submanifolds in spheres has been mostly established on such hypersurfaces and their focal manifolds~\cite{TY},~\cite{TXY}, metrics of positive constant scalar curvature have been constructed on products of Riemannian manifolds~\cite{HP}, and moreover, many more stable and unstable examples of Lagrangian submanifolds in the complex hyperquadrics have been given through such (homogeneous) hypersurfaces~\cite{MO},~\cite{MO1}. (The references are by no means exhaustive.) It is hoped that the completed classification of isoparametric hypersurfaces would spur even more advances far beyond the standard sphere.

\section{The basics}
\subsection{Second fundamental form of a focal manifold} Let $M$ be an isoparametric hypersurfaces with four
principal curvatures in the sphere. Let $F$ be its Cartan-M\"{u}nzner
polynomial of degree $g$ that satisfies~\cite[I]{Mu}
\begin{equation}\label{CM}
|\nabla F|^2(x)=g^2|x|^{2g-2},\quad (\Delta F)(x)=(m_{-}-m_{+})g^2|x|^{g-2}/2,
\end{equation}
and let $f$ be the restriction of $F$ to the sphere. 

To fix notation, we make the convention
that its two focal manifolds are $M_{+}:=f^{-1}(1)$ and
$M_{-}:=f^{-1}(-1)$ with respective codimensions $m_{+}+1\leq m_{-}+1$ in
the ambient sphere $S^{2(m_{+}+m_{-})+1}$ by changing $F$ to $-F$ if necessary. 
The principal curvatures
of the shape operator $S_n$ of $M_{+}$ (respectively, $M_{-}$) with respect to any unit
normal $n$ 
are $0,1$ and $-1$, whose multiplicities are, respectively,
$m_{+},m_{-}$ and $m_{-}$ (respectively, $m_{-},m_{+}$ and $m_{+}$).

On the unit normal sphere bundle $UN_{+}$ of $M_{+}$, let $(x,n_0)\in UN_{+}$
be points in a small open set; here $x\in M_{+}$ and $n_0$ is normal to the tangents
of $M_{+}$ at $x$. We define a smooth orthonormal frame
$n_a,e_p,e_\alpha,e_\mu$, where $1\leq a,p\leq m_{+}$ and
$1\leq\alpha,\mu\leq m_{-}$,
in such a way that $n_a$ are tangent to
the unit normal sphere at $n_0$, and $e_p,e_\alpha$
and $e_\mu$, respectively, are basis vectors of the eigenspaces $E_0,E_{+}$ and $E_{-}$
of the shape operator $S_{n_0}$. 
The symmetric matrices $S_a:=S_{n_a}$ relative to $E_{+},E_{-}$ and $E_0$ are
\begin{equation}\label{0a}
S_0=\begin{pmatrix}Id&0&0\\0&-Id&0\\0&0&0\end{pmatrix},\quad
S_a=\begin{pmatrix}0&A_a&B_a\\A_a^{tr}&0&C_a\\B_a^{tr}&C_a^{tr}
&0\end{pmatrix},\quad 1\leq a\leq m_{+},
\end{equation}
where $A_a:E_{-}\rightarrow E_{+}$,
$B_a:E_0\rightarrow E_{+}$ and $C_a:E_0\rightarrow E_{-}$.

Given the second fundamental form $S(X,Y)$, the third fundamental form of $M_{+}$ is the symmetric tensor
$$
q(X,Y,Z):=(\nabla^{\perp}_X S)(Y,Z)/3,
$$
where $\nabla^{\perp}$ is the normal connection. Write 
$$
p_a(X,Y):=\langle S(X,Y),n_a\rangle,\quad q^a(X,Y,Z)=\langle q(X,Y,Z),n_a\rangle,\quad 0\leq a\leq m_{+}.
$$
The Cartan-M\"{u}nzner polynomial $F$ is related to $p_a$ and $q^a$ by
the expansion formula of Ozeki and Takeuchi~\cite[I, p. 523]{OT}
\begin{eqnarray}\nonumber
\aligned
&F(tx+y+w)=t^4+(2|y|^2-6|w|^2)t^2+8(\sum_{i=0}^{m_{+}}p_{i}w_{i})t\\
&+|y|^4-6|y|^2|w|^2+|w|^4-2\sum_{i=0}^{m_{+}}p_{i}^2-8\sum_{i=0}^{m_{+}}q^{i}w_{i}
\\
&+2\sum_{i,j=0}^{m_{+}}\langle \nabla p_{i},\nabla p_{j}\rangle w_{i}w_{j},
\endaligned
\end{eqnarray}
where $w:=\sum_{i=0}^{m_{+}} w_i n_i$, $y$ is tangential to $M_{+}$ at $x$,
$p_i:=p_i(y,y)$ and $q^i:=q^i(y,y,y)$. Note that our definition of $q^i$
differs from that of Ozeki and Takeuchi by a sign. It follows
that the second and third fundamental forms at a single point of $M_{+}$
(or $M_{-}$) determine the isoparametric family, where the two forms are related by
ten rather convoluted equations of Ozeki and Takeuchi~\cite[I, p. 530]{OT}, of which the first
three is a rephrase of the fact that the shape operator $S_n$ in any normal
direction $n$ satisfies $(S_n)^3=S_n$, which implies the following identities,
among others~\cite[II, p.45]{OT}:
\begin{equation}\label{conv}
\aligned
&A_iA_j^{tr}+A_jA_i^{tr}+2(B_iB_j^{tr}+B_jB_i^{tr})=2\delta_{ij}Id;\\
&A_i^{tr}A_j+A_j^{tr}A_i+2(C_iC_j^{tr}+C_jC_i^{tr})=2\delta_{ij}Id;\\
&A_iC_jB_j^{tr}+B_iC_j^{tr}A_j^{tr}+A_jC_iB_j^{tr}\quad\text{is skew-symmetric};\\
&C_jB_j^{tr}A_i+A_j^{tr}B_iC_j^{tr}+C_iB_j^{tr}A_j\quad\text{is skew-symmetric};\\
&B_j^{tr}A_iC_j+C_j^{tr}A_j^{tr}B_i+B_j^{tr}A_jC_i\quad\text{is skew-symmetric};\\
&B_j^{tr}B_i+B_i^{tr}B_j=C_j^{tr}C_i+C_i^{tr}C_j;\\
&(A_iA_i^{tr}+B_iB_i^{tr})B_j+B_j(B_i^{tr}B_i+C_i^{tr}C_i)+B_iB_j^{tr}B_i+\\&
A_jA_i^{tr}B_i+A_iA_j^{tr}B_i+B_iC_i^{tr}C_j+B_iC_j^{tr}C_i=B_j;\\
&C_i^{tr}A_i^{tr}B_i+B_i^{tr}A_iC_i=0.\\
\endaligned
\end{equation}

Lemma 49~\cite[p. 64]{CCJ} ensures
that we can assume 
\begin{equation}\label{BC}
B_1=C_1=\begin{pmatrix}0&0\\0&\sigma\end{pmatrix},
\end{equation}
where $\sigma$ is a nonsingular diagonal matrix of size
$r$-by-$r$ with $r$ the rank of $B_1$,
and $A_1$ is of the form
\begin{equation}\label{A}
A_1=\begin{pmatrix}I&0\\0&\Delta\end{pmatrix},
\end{equation}
where $\Delta=\text{diag}(\Delta_1,\Delta_2,\Delta_3,\cdots)$
is of size $r$-by-$r$, in which $\Delta_1=0$
and $\Delta_i,i\geq 2,$ are nonzero skew-symmetric matrices expressed in the
block form
$\Delta_i=\text{diag}(\Theta_i,\Theta_i,\Theta_i,\cdots)$ with $\Theta_i$ a
2-by-2 matrix of the form
$$
\begin{pmatrix}0&f_i\\-f_i&0\end{pmatrix}
$$
for some $0<f_i<1$, where the block of $\sigma$ corresponding to $\Delta_1=0$
is\, $I/\sqrt{2}$.

\begin{definition} We call a normal basis $n_0,n_1,n_2,\cdots,n_{m_{+}}$ {\rm(}or simply the pair $(n_0,n_1)$\rm{)} {\bf normalized} with {\bf spectral data} $(\sigma,\Delta)$ if $S_0$ and $S_a,1\leq a\leq m_{+},$ are given in~\eqref{0a} satisfying~\eqref{BC} and~\eqref{A}.
\end{definition}

\begin{remark}\label{RK} The geometric meaning of the rank $r$ of $B_1$ is
that $m_{+}-r$
is the dimension of the intersection of the kernels of the two shape operators
$S_0$ and $S_1$. 
\end{remark}

\begin{corollary}\label{inde} Let $(m_{+},m_{-})=(7,8)$. Let an integer $0\leq r\leq 7$ be the rank
of $B_1$ of size $8$-by-$7$, which is normalized as in~\eqref{BC}. 
\begin{description}
\item[(1)] The
first $8-r$ rows of $B_a$ and $C_a$ are
zero for at most one index $a$ between $2$ and $7$ when $r=2$, and at most three indexes $a$ when $r=4$.
\item[(2)]
Assume $r=0$. Away from points of Condition A on $M_{+}$,  no other index $a$ between $2$ and $7$ can make 
the first six rows of $B_a$ and $C_a$ zero if there is an index $c$ between $2$ and $7$ for which $B_c$ is of rank $2$, and at 
most two other such indexes $a$ to make the first four rows of $B_a$ and $C_a$ zero if there is a $B_c$ of rank $4$.
\end{description}
\end{corollary}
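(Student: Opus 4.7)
The plan is to use the Hurwitz-type identities (first two lines of~\eqref{conv}) together with the normalized form of $S_1$ to pin down the block shape of each $A_a$ whose companions $B_a,C_a$ have the stated row-vanishing property; a maximality argument via the Hurwitz--Radon representation theory of Clifford algebras then yields the numerical bounds.

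For part~(1), write $A_a=\begin{pmatrix}P_a&Q_a\\R_a&T_a\end{pmatrix}$ in the $(8-r,r)\times(8-r,r)$ block decomposition dictated by~\eqref{BC}--\eqref{A}. Since the first $8-r$ rows of $B_a,C_a$ vanish, the top-right blocks of $B_1B_a^{tr}+B_aB_1^{tr}$ and $C_1C_a^{tr}+C_aC_1^{tr}$ are identically zero. The two Hurwitz relations at $(1,a)$ then yield $P_a+P_a^{tr}=0$ in the top-left block and $Q_a\Delta^{tr}+R_a^{tr}=0$, $Q_a+R_a^{tr}\Delta=0$ in the top-right blocks; combining these gives $Q_a(I-\Delta^{tr}\Delta)=0$. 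Since $\Delta^{tr}\Delta$ has eigenvalues $0$ and $f_i^2$ with $0<f_i<1$, the matrix $I-\Delta^{tr}\Delta$ is invertible, forcing $Q_a=0$ and symmetrically $R_a=0$, so $A_a=\mathrm{diag}(P_a,T_a)$ with $P_a$ skew. The diagonal Hurwitz relation then gives $P_aP_a^{tr}=I$, hence $P_a^2=-I$. Two indices $a\ne a'$ satisfying the hypothesis yield $P_aP_{a'}+P_{a'}P_a=0$ in the top-left block of the relation at $(a,a')$, and the Hurwitz--Radon bound on pairwise anticommuting complex structures on $\mathbb{R}^{8-r}$ gives at most $\rho(8-r)-1$ such indices, i.e.\ $1$ for $r=2$ (since $\rho(6)=2$) and $3$ for $r=4$ (since $\rho(4)=4$).

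For part~(2), $r=0$ forces $B_1=C_1=0$ and $A_1=I$, so the Hurwitz relation at $(1,a)$ gives $A_a+A_a^{tr}=0$ for every $a\ge 2$. Given $c$ with $\mathrm{rank}(B_c)=r_c\in\{2,4\}$, choose coupled orthonormal bases of $E_+,E_-,E_0$ (compatible with $A_1=I$) so that both $B_c$ and $C_c$ have vanishing top $8-r_c$ rows; this is possible because the diagonal Hurwitz identity forces $B_cB_c^{tr}=C_cC_c^{tr}$, aligning their singular decompositions. Combined with skewness of $A_c$, the diagonal Hurwitz relation yields $A_c=\mathrm{diag}(X_c,W_c)$ with $X_c^{2}=-I$ on $\mathbb{R}^{8-r_c}$ and $W_c^{2}=2\sigma_c^{2}-I$, so $I+W_c^{2}$ is invertible (since $\sigma_c$ has entries strictly less than $1/\sqrt{2}$ generically). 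Applying the off-diagonal Hurwitz relations at $(c,a)$ for any $a$ satisfying the hypothesis yields $R_a^{tr}(I+W_c^{2})=0$ and then $Q_a=0$, so again $A_a=\mathrm{diag}(P_a,T_a)$ with $P_a$ a complex structure on $\mathbb{R}^{8-r_c}$; moreover the top-left block gives $X_cP_a+P_aX_c=0$, so the family $\{X_c\}\cup\{P_a\}$ consists of pairwise anticommuting complex structures on $\mathbb{R}^{8-r_c}$. The Hurwitz--Radon bound then gives $1+\#\{a\}\le\rho(8-r_c)-1$, i.e.\ $\#\{a\}\le 0$ for $r_c=2$ and $\#\{a\}\le 2$ for $r_c=4$, as claimed.

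The main obstacle I anticipate is the technical execution of part~(2): one must carefully verify that the coupled basis freedom (preserving $A_1=I$) suffices to align $B_c$ and $C_c$ as described and to produce the block-diagonal form of $A_c$, and one must secure the invertibility of $I+W_c^{2}$ at generic points (so that the $s_i=1/\sqrt{2}$ degeneracy does not obstruct the argument). By contrast, the Hurwitz-equation manipulations in each case and the numerical Clifford-algebra bounds are routine once the block structures are in place.
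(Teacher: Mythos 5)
Your part (1) is correct and close in spirit to the paper's argument: you kill the off-diagonal blocks of $A_a$ and then invoke the bound on anticommuting complex structures on ${\mathbb R}^{8-r}$ (equivalently, Clifford module dimensions), which is exactly the paper's counting step. The only difference is that you derive $Q_a=R_a=0$ from the first \emph{two} identities of~\eqref{conv} (getting $Q_a(I-\Delta^{tr}\Delta)=0$ with $I-\Delta^{tr}\Delta$ invertible because $0<f_i<1$), whereas the paper uses the first identity together with the \emph{third} (getting $\beta_i\sigma^2=0$ with $\sigma$ nonsingular). Both routes are valid.

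Part (2) has a genuine gap. After aligning the frame so that the top $8-r_c$ rows of $B_c$ and $C_c$ vanish (which is fine: skewness of $A_c$ gives $B_cB_c^{tr}=C_cC_c^{tr}$, so a single coupled rotation preserving $A_1=I$ works), you assert that the diagonal Hurwitz relation plus skewness forces $A_c=\mathrm{diag}(X_c,W_c)$ with $X_c^2=-I$. It does not: writing $A_c=\bigl(\begin{smallmatrix}X&Y\\-Y^{tr}&W\end{smallmatrix}\bigr)$, the relation $A_cA_c^{tr}+2B_cB_c^{tr}=I$ only yields $XX^{tr}+YY^{tr}=I$, $XY=-YW$, and $Y^{tr}Y+WW^{tr}=I-2\sigma_c^2$; nothing here immediately kills $Y$, and both the claim $X_c^2=-I$ and the anticommutation count $\{X_c\}\cup\{P_a\}$ hinge on $Y=0$. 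Because you restrict to coupled basis changes (to keep $A_1=I$), you cannot invoke the normalization of~\eqref{BC}--\eqref{A} for the pair $(n_0,n_c)$, which is exactly what would give the block structure for free; proving $Y=0$ within your constrained frame requires a separate (nontrivial) argument that you have not supplied. Also, the worry you flag about $\sigma_c$ having entries $<1/\sqrt2$ is misdirected: entries equal to $1/\sqrt2$ do occur (they are forced whenever the corresponding $\Delta$-block vanishes, and indeed later in the paper the spectral data are $(\sigma,\Delta)=(I/\sqrt2,0)$), but this is harmless, since granting block-diagonality one has $I+W_c^2=2\sigma_c^2$, invertible simply because $\mathrm{rank}\,B_c=r_c$ makes $\sigma_c$ nonsingular; no genericity is available or needed, as the corollary is pointwise. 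The paper avoids all of this with a short trick you should adopt: swap $n_1$ and $n_c$, renormalize the new pair by Lemma 49 (independent basis changes of $E_{+},E_{-},E_0$ are allowed there), and apply part (1); since $B_1=C_1=0$ is a frame-independent property, the old $n_1$ automatically occupies one of the slots allowed by part (1), leaving none (for rank $2$) or at most two (for rank $4$) for other indices.
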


\begin{proof} $A_1$ and $B_1=C_1$ are normalized. Assume without loss of generality
that the first $8-r$ rows of $B_i$ and $C_i$ are zero. Write
\begin{equation}\label{extra}
A_i=\begin{pmatrix}\alpha_i&\beta_i\\\gamma_i&\delta_i\end{pmatrix},\quad
B_i=\begin{pmatrix}0&0\\b_i&c_i\end{pmatrix},\quad C_i=\begin{pmatrix}0&0\\e_i&f_i\end{pmatrix},
\end{equation}
where $\delta_i, c_i, f_i$ are of size $r$-by-$r$. 

The first identity of~\eqref{conv} applied to $A_i$ and $j=1$
gives
$$
\alpha_i=-\alpha_i^{tr},\quad\gamma_i^{tr}=\beta_i\Delta,
$$
while the third identity gives
$$
\beta_i\sigma^2=0,
$$
so that $\beta_i=\gamma_i=0$. 
 
Suppose there are $k$ indexes $i_1,\cdots,i_k$ between 2 and 7 satisfying~\eqref{extra}. Then it follows from the first identity of~\eqref{conv} applied to
$A_i,A_j,2\leq i,j,$ that
\begin{equation}\label{clifford}
\alpha_{i_s}\alpha_{i_t}+\alpha_{i_t}\alpha_{i_s}=-2\delta_{st}I.
\end{equation}
Meanwhile, $\alpha_{i_1},\cdots,\alpha_{i_k}$ are linearly independent; or else a suitable
linear combination of them will make, say, $\alpha_{i_1}=0$ after a basis change, which
contradicts~\eqref{clifford}. Therefore, the $k$ $(8- r)$-by-$(8-r)$
matrices $\alpha_{i_1},\cdots,\alpha_{i_k}$ make ${\mathbb R}^{8-r}$ into a Clifford
$C_k$-module, so that $\dim(C_k)$ divides $8-r$. We conclude by the classification table of $C_k$ that $k=1$, i.e., there is only one index $a$ between 2 and 7 when $r=2$
because only $\dim(C_1)=2$ divides $6=8-r$. Likewise, $k\leq 3$ when $r=4$, i.e., there are at most three indexes $a$ between 2 and 7 when $r=4$ because $\dim(C_3)=4$ divides $4=8-r$ while $\dim(C_4)=8$. This proves item (1).

When $r=0$, one of the pairs $(B_2,C_2),\cdots,(B_7,C_7)$
is nonzero, say $(B_2,C_2)\neq 0$, for lack of Condition A. We may swap
$n_1$ and $n_2$ so that the old $n_2$ is now the new $n_1^{'}$ with the new $r'\neq 0$, while
the old $n_1$ is now the new $n_2^{'}$ with the new
$B_{2'}=C_{2'}=0$. We apply item (1) to this new indexing to conclude that
there is at most one index $a'$ between $2'$ and $7'$ for which the first six rows of $B_{a'}$ and $C_{a'}$ are zero when $r'=2$, namely,
$a'=2$ itself. That is, in terms of the old indexing, no $a$ between $3$ and $7$ can make the first $6$ rows of $B_a$ and $C_a$ zero when the old $B_2$ is of rank 2.

Meanwhile, the same argument applies to the new indexing to give at most three indexes
$a'\geq 2$ to make the first four rows of $B_{a'}$ and $C_{a'}$ zero when $r'=4$, namely, $a'=2$ and two other indexes. 
That is, in terms of the old indexing, at most two indexes $a$ between $3$ and $7$ can make the first four rows of $B_a$ and $C_a$ zero when the old $B_2$ is of rank 4. This proves the second statement.

\end{proof}

\subsection{A motivational calculation} Let $\rho_1,\cdots,\rho_7$ be a representation of the (anti-symmetric)\label{subsec}
Clifford algebra $C_7$ on ${\mathbb R}^{16}$.
Set
\begin{eqnarray}\nonumber
\aligned
P_0&:(c,d)\mapsto (c,-d),\\
P_1&:(c,d)\mapsto (d,c),\\
P_{1+i}&:(c,d)\mapsto (\rho_i(d),-\rho_i(c)),\quad 1\leq i\leq 7,
\endaligned
\end{eqnarray}
over ${\mathbb R}^{32}={\mathbb R}^{16}\oplus{\mathbb R}^{16}.$
$P_0,P_1,\cdots,P_8$ form a representation of the (symmetric) Clifford
algebra $C_9'$
on ${\mathbb R}^{32}$.

Following our convention, we denote by $M_{-}$ the focal manifold in each of the two examples constructed by Ferus, Karcher, and M\"{n}zner on which the Clifford action acts. It is well known~\cite{FKM} that $M_{-}$ can be realized as the Clifford-Stiefel manifold. Namely,
\begin{eqnarray}\nonumber
\aligned
M_{-}=\{&(\zeta,\eta)\in S^{31}\subset{\mathbb R}^{16}\times{\mathbb R}^{16}:\\
&|\zeta|=|\eta|=1/\sqrt{2},\zeta\perp\eta,\rho_i(\zeta)\perp\eta,i=1,\cdots,7\}.
\endaligned
\end{eqnarray}



At $(\zeta,\eta)\in M_{-}$, the normal space is
$$
N^*=\text{span}(\epsilon_0:=P_0((\zeta,\eta)),\cdots,\epsilon_8:=P_8((\zeta,\eta))).
$$
$E_0^*$, the 0-eigenspace of the shape operator $S_0^*:=S^*_{\epsilon_0}$, is
$$
E_0^*=\text{span}(\epsilon_9:=P_1P_0((\zeta,\eta)),\cdots,\epsilon_{16}:=P_8P_0((\zeta,\eta))).
$$
$E_{\pm}^*$, the $\pm 1$-eigenspaces of $S^*_0$, are
$$
E_{\pm}^*:=\{X:P_0(X)=\mp X,X\perp N^*\}.
$$
Since $E_{+}^*$ (respectively, $E_{-}^*$) consists of vectors of the form $(0,d)\in{\mathbb R}^{32}$ (respectively, $(f,0)\in{\mathbb R}^{32}$), we obtain
$$
\aligned
&E_{+}^*=\{(0,d): d\perp \zeta,d\perp \eta,d\perp \rho_i(\zeta),\forall i\},\\
&E_{-}^*=\{(f,0): f\perp\zeta,f\perp\eta,f\perp\rho_i(\eta),\forall i\}.
\endaligned
$$
The shape operator $S^*_a$ at $(\zeta,\eta)\in M_{-}$ in the normal direction $\epsilon_a\in N^*$ is
$$
S^*_a(X,Y)=-\langle P_a(X),Y\rangle, \quad0\leq a\leq 8.
$$

For illustrating purpose, let us look at the representation
$$
\rho_i: {\mathbb O}\oplus {\mathbb O}\rightarrow {\mathbb O}\oplus {\mathbb O},\quad \rho_i:(x,y)\mapsto (xe_i,ye_i),\quad 1\leq i\leq 7,
$$
where 
$$
(e_0,e_1,\cdots,e_7):=(1,i,j,k,\epsilon,\epsilon i,\epsilon j,\epsilon k)
$$
are the standard basis elements of the octonion algebra ${\mathbb O}$.

Let us choose 
$$
\zeta=(e_0,e_1)/2,\quad \eta=(e_3,e_4)/2.
$$
We calculate to see 
$$
\aligned 
&E_{+}^*=\{((0,0),(\alpha,\beta)):\alpha=e_1\beta, \beta\perp e_2\},\\
&E_{-}^*=\{((x,y),(0,0)):x=e_3(e_2y),y\perp e_1\}.
\endaligned
$$
Therefore, the 7-by-7 $A_a^*$-block of $S_a^*$ reads
$$
A_\alpha^*=\begin{pmatrix} S^*_\alpha(X_a,Y_p)\end{pmatrix}=\begin{pmatrix}-\langle P_\alpha(X_a,Y_p\rangle\end{pmatrix},
$$
where $X_a,Y_p$ are orthonormal basis elements in $E_{+}^*$ and $E_{-}^*$, respectively, which can be chosen to be
$$
X_a=((0,0),(e_1e_a, e_a))/\sqrt{2},\;\; a\neq 2,\quad Y_p=((e_3(e_2e_p),e_p),(0,0))/\sqrt{2},\;\; p\neq 1.
$$

As said in the introduction, this calculation is conducted at $(x^*,n^*):=((\zeta,\eta),\epsilon_0)$ on the unit normal bundle of $M_{-}$, and we can convert it to its mirror point $(x,n)$ on the unit normal bundle of $M_{+}$, so that in fact the data
$A_\alpha^*$ are converted to the seven 8-by-7 matrices
$$
B_a:=\begin{pmatrix}S^*_\alpha(X_a,X_p)\end{pmatrix},\quad 1\leq a\leq 7.
$$
(See~\eqref{gOOd},~\eqref{Good} for the conversion formulae.) The upshot is the following:
$$
\aligned
&B_1=\begin{pmatrix} 0&0&0&0\\0&0&0&0\\0&0&I&0\\0&0&0&I\end{pmatrix},B_2=\begin{pmatrix}0&0&0&0\\0&0&0&0\\0&0&J&0\\0&0&0&-J\end{pmatrix}, B_3=0, B_4=\begin{pmatrix}0&0&L&0\\0&0&0&0\\0&0&0&0\\0&I&0&0\end{pmatrix}\\
&B_5=\begin{pmatrix}0&0&K&0\\0&0&0&0\\0&0&0&0\\0&-J&0&0\end{pmatrix}, B_6=\begin{pmatrix}0&0&0&I\\0&0&0&0\\0&-L&0&0\\0&0&0&0\end{pmatrix}, B_7=\begin{pmatrix}0&0&0&J\\0&0&0&0\\0&-K&0&0\\0&0&0&0\end{pmatrix},
\endaligned
$$
where 
$$
I:=\begin{pmatrix}1&0\\0&1\end{pmatrix},\quad J:=\begin{pmatrix}0&1\\-1&0\end{pmatrix},\quad 
\quad K=\begin{pmatrix}0&1\\1&0\end{pmatrix},\quad L:=\begin{pmatrix}1&0\\0&-1\end{pmatrix}.
$$
Here, each row is of size 2, and the first column is of size 1 and the remaining columns are of size 2.

Note that $x$ is not of Condition A, and all $B_a$ have a common zero column and all $B_a^{tr}$ have two common zero columns. This is the content mentioned in the two universal properties in the introduction. We shall see in the next section that the basis associated with $B_1,\cdots, B_7$ is 4-null, a notion briefly introduced in the introduction. This example shall be our prototype to keep in mind.

\section{$r$-nullity}\label{a}
\subsection{The layout} 
To fix notation and for the reader's convenience, let us first summarize the layout in~\cite{Chiq},~\cite{Chiq4} of the crucial codimension 2 estimate
in the case when the principal multiplicity pair of the isoparametric
hypersurface is not $(7,8)$. We then point out the insufficiency of this
approach and the need for a notion more general than Condition A of Ozeki
and Takeuchi,
when the principal multiplicity pair
of the isoparametric hypersurface is $(m_{+},m_{-})=(7,8)$. 

Recall that on $M_{+}$ we denote by $S_0,\cdots,S_{m_{+}}$ the shape operators
in the normal directions $n_0,\cdots,n_{m_{+}}$, and by
$p_0,\cdots,p_{m_{+}}$ the corresponding components of the second fundamental form. 

We agree that ${\mathbb C}^{2m_{-}+m_{+}}$ consists of
points $(u,v,w)$
with coordinates $u_\alpha,v_\mu$ and $w_p$, where $1\leq\alpha,\mu\leq m_{-}$
and $1\leq p\leq m_{+}$. For $0\leq k\leq m_{+}$, let
$$
V_k:=\{(u,v,w)\in{\mathbb C}^{2m_{-}+m_{+}}:p_0(u,v,w)=\cdots=p_k(u,v,w)=0\}
$$
be the variety carved out by $p_0,\cdots,p_k$.
We want to estimate the dimension of the subvariety ${\mathscr J}_k$ of
${\mathbb C}^{2m_{-}+m_{+}}$, where
$$
{\mathscr J}_k:=\{(u,v,w)\in{\mathbb C}^{2m_{-}+m_{+}}
:\text{rank of Jacobian of}\; p_0,\cdots,p_k<k+1\}.
$$

$p_0,\cdots,p_{k}$ give rise to a linear system of
cones ${\mathcal C}_\lambda$ in ${\mathbb C}^{2m_{-}+m_+}$ defined by
$$
c_0p_0+\cdots+c_{k}p_{k}=0
$$
with
\begin{equation}\nonumber
\lambda:=[c_0:\cdots:c_{k}]\in{\mathbb C}P^{k}.
\end{equation}
The singular subvariety of ${\mathcal C}_\lambda$ is
\begin{equation}\label{sing}
{\mathscr S}_\lambda:=\{(u,v,w)\in{\mathbb C}^{2m_{-}+m_{+}}:
(c_0S_0+\cdots+c_kS_k)\cdot (u,v,w)^{tr}=0\}.
\end{equation}
We have 
\begin{equation}\nonumber
{\mathscr J}_k=\bigcup_{\lambda\in{\mathbb C}P^k}{\mathscr S}_\lambda.
\end{equation}
Set
$$
J_k:=V_k\cap {\mathscr J}_k=\bigcup_{\lambda\in{\mathbb C}P^k} (V_k\cap{\mathscr S}_\lambda).
$$
$J_k$ is where the Jacobian of $p_0,\cdots,p_k$ fails to be of
rank $k+1$ on the variety $V_k$.

We wish to establish the codimension 2 estimate
\begin{equation}\label{prime}
\dim(J_k)\leq\dim(V_k)-2,
\end{equation}
for all $k\leq m_{+}-1$, to verify that $p_0,p_1,\cdots,p_{m_{+}}$ form a regular
sequence.
                                      
We first estimate the dimension of ${\mathscr S}_\lambda$. We established
in~\cite{Chiq4} that
it suffices to consider those $\lambda$ sitting in the hyperquadric
\begin{equation}\label{quadric}
{\mathcal Q}_{k-1}:=\{[c_0:\cdots:c_k]\in{\mathbb C}P^k:c_0^2+\cdots+c_k^2=0\}.
\end{equation}

Recall the following~\cite[Remark 2, p. 484]{Chiq4}.

\begin{convention}\label{convention}
For each $\lambda=[c_0:\cdots:c_k]\in{\mathcal Q}_{k-1}$, we choose $\tilde{n}_0$ and $\tilde{n}_1$ as follows.
Decompose $n:=c_0n_0+\cdots+c_kn_k$ into its real and imaginary parts 
$n=\alpha+\sqrt{-1}\beta$. Define $\tilde{n}_0$ and $\tilde{n}_1$ by performing the Gram-Schmidt
process on $\alpha$ and $\beta$. Then normalize the shape operators 
$S_{\tilde{n}_0},S_{\tilde{n}_1}$ as in~\eqref{BC} and~\eqref{A},
which results in a $2$-frame $(\tilde{n}_0,\tilde{n}_1)$ that varies smoothly with
$\lambda$. Note that $\lambda$ can be interpreted as the oriented real $2$-plane spanned by $n_{\tilde{0}}$ and $n_{\tilde{1}}$. 

We denote
the rank of the matrix $B_{\tilde{1}}$ associated with $S_{\tilde{n}_1}$ by $r_\lambda$. Recall from Remark~$\ref{RK}$ that
$m_{+}-r_\lambda$ is the dimension of the intersection of the kernel spaces
of $S_{\tilde{n}_0}$ and $S_{\tilde{n}_1}$. 

When it is necessary, we will extend $\tilde{n}_0$ and $\tilde{n}_1$ to
an orthonormal basis $\tilde{n}_0,\tilde{n}_1,\cdots,\tilde{n}_{m_{+}}$ with the corresponding shape
operators $S_{\tilde{0}}:=S_{\tilde{n}_0},S_{\tilde{1}}:=S_{\tilde{n}_1},\cdots,S_{\tilde{m}_{+}}:=S_{\tilde{n}_{m_{+}}}$ and components $p_{\tilde{0}},p_{\tilde{1}},\cdots,
p_{\tilde{m}_{+}}$ of the second fundamental form.
\end{convention}

The convention facilitates the dimension estimate for ${\mathscr S}_\lambda$. Indeed,
the defining equation of ${\mathscr S}_\lambda$ can now be written as
\begin{equation}\label{SY}
(S_{\tilde{1}}- \iota_\lambda S_{\tilde{0}})\cdot (x,y,z)^{tr}=0
\end{equation}
after a basis change for some complex number $\iota_\lambda$.
We decompose $x,y,z$ into $x=(x_1,x_2),y=(y_1,y_2),z=(z_1,z_2)$
with $x_2,y_2,z_2\in{\mathbb C}^{r_{\lambda}}$. We have
\begin{eqnarray}\label{estimate}
\aligned
x_1=-\iota_\lambda y_1,&\qquad y_1=\iota_\lambda x_1,\\
-\Delta x_2+\sigma z_2=-\iota_\lambda y_2,&\qquad \Delta y_2+\sigma z_2=\iota_\lambda x_2,\\
\Delta(x_2&+y_2)=0.
\endaligned
\end{eqnarray}
It follows from the first pair of equations in~\eqref{estimate} that either $x_1=y_1=0$, or both are nonzero
with $\iota_\lambda=\pm\sqrt{-1}$. In both cases, by the second pair of equations
in~\eqref{estimate}, we have
\begin{equation}\label{esti}
(\Delta^2-\iota_\lambda^2I)x_2=(\Delta-\iota_\lambda I)\sigma z_2,\qquad (\Delta^2-\iota_\lambda^2I)y_2=-(\Delta-\iota_\lambda I)\sigma z_2,
\end{equation}
which together with the third equation in~\eqref{estimate} imply that 
$x_2=-y_2$, and so $z_2$ can be solved in terms of $x_2$ by the second pair of
equations in~\eqref{estimate}. (Note that conversely $x_2=-y_2$
can be solved in terms of $z_2$ when $\iota_\lambda\neq\pm f_i\sqrt{-1}$ for all $i$ and any real $0<f_i<1$, so that $z$
can be chosen to be a free variable.) Thus
either $x_1=y_1=0$, in which case
$$
\dim({\mathscr S}_\lambda)=m_{+},
$$
or both $x_1$ and $y_1$ are nonzero,
in which case $y_1=\pm\sqrt{-1}x_1$, where $x_1$ is a free variable, $x_2$ and $y_2$ depend linearly on $z_2$ and 
$z$ is a free variable. Hence,
\begin{equation}\label{EST}
\dim({\mathscr S}_\lambda)=m_{+}+m_{-}-r_{\lambda}.
\end{equation}

Since eventually we must estimate the dimension of 
$$
\bigcup_{\lambda\in{\mathcal Q}_{k-1}}(V_k\cap{\mathscr S}_\lambda),
$$
the essential part of $J_k$ for the codimension 2 test, we introduced the first cut of $V_k$ into
${\mathscr S}_\lambda$ by
\begin{equation}\label{p0cut}
0=p_{\tilde{0}}=\sum_{\alpha}(x_\alpha)^2-\sum_{\mu}(y_\mu)^2.
\end{equation}
We substitute $y_1=\pm \sqrt{-1}x_1$ and $x_2$
and $y_2$ in terms of $z_2$ into $p_{\tilde{0}}=0$ to
deduce
$$
0=(x_1)^2+\cdots+(x_{m_{-}-r_\lambda})^2;
$$
hence $p_{\tilde{0}}=0$ cuts ${\mathscr S}_\lambda$ to reduce the dimension by 1, i.e., by~\eqref{EST},
\begin{equation}\label{sub}
\dim(V_{k}\cap{\mathscr S}_\lambda)\leq m_{+}+m_{-}-r_\lambda-1.
\end{equation}
Consider the incidence space
\begin{equation}\label{incidence}
{\mathcal I}_k:=\{(x,\lambda)\in{\mathbb C}^{2m_{-}+m_{+}}\times
{\mathcal Q}_{k-1}:x\in{\mathscr S}_\lambda\cap V_k\}.
\end{equation}
Let $\pi_1$ and $\pi_2$ be the restriction to ${\mathcal I}_k$ of 
the standard projections from
${\mathbb C}^{2m_{-}+m_{+}}\times {\mathcal Q}_{k-1}$ onto the first and
second factors. We see
$$
\pi_1({\mathcal I}_k)=\bigcup_{\lambda\in{\mathcal Q}_{k-1}}(V_k\cap{\mathscr S}_\lambda).
$$
Moreover, if we stratify ${\mathcal Q}_{k-1}$ 
into locally closed sets (i.e.,
Zariski open sets in their respective closures) 
\begin{equation}\label{L}
{\mathcal L}_j:=\{\lambda\in{\mathcal Q}_{k-1}:r_\lambda=j\},
\end{equation}
then 
$$
{\mathcal W}_j:=\pi_1\pi_2^{-1}({\mathcal L}_j)
$$
stratify 
$$
\bigcup_{\lambda\in{\mathcal Q}_{k-1}}(V_k\cap{\mathscr S}_\lambda).
$$
We thus obtain, by~\eqref{sub},
\begin{equation}\label{W}
\aligned
&\dim({\mathcal W}_j)\leq \dim(\pi_2^{-1}({\mathcal L}_j))
\leq\max_{\lambda\in{\mathcal L}_j}(\dim(V_{k}\cap{\mathscr S}_\lambda))
+\dim({\mathcal L}_j)\\
&\leq (m_{+}+m_{-}-1-j)+\dim({\mathcal L}_j)
\endaligned
\end{equation}
On the other hand, since $V_k$ is cut out by $k+1$ equations, we have
\begin{equation}\label{lower-bound}
\dim(V_k)\geq m_{+}+2m_{-}-k-1.
\end{equation}
Therefore, the {\em a priori} codimension 2 estimate holds true over ${\mathcal L}_j$ when
\begin{equation}\label{est}
m_{-}\geq 2k+1-j-c_j,
\end{equation}
where 
\begin{equation}\label{cod}
c_j:=\text{the codimension of}\;{\mathcal L}_j\;\text{in}\;{\mathcal Q}_{k-1}.
\end{equation}

\subsection{$r$-nullity}
Note that we only utilized cutting ${\mathscr S}_\lambda$ by $p_{\tilde{0}}=0$ to
derive the coarse upper bound in~\eqref{sub} and lower bound in~\eqref{est}.
The lower bound is too rough to be effective when the multiplicity pair is
$(7,8)$.  A better
upper or lower bound will be achieved if we can obtain further nontrivial cuts
into ${\mathscr S}_\lambda$ by other $p_{\tilde{a}}=0,a\geq 1$.

As a matter of fact, $p_{\tilde{1}}=0$ results in the same cut on ${\mathscr S}_\lambda$
as $p_{\tilde{0}}=0$. This follows by the symmetry of~\eqref{SY} so that we can switch the roles of $S_{\tilde{0}}$
and $S_{\tilde{1}}$. Therefore, nontrivial new cuts can only be obtained by $p_{\tilde{a}}=0$ for $a\geq 2$.

On the other hand, the worst case scenario is that $p_{\tilde{a}}$ annihilate ${\mathscr S}_\lambda$
for all $a\geq 2$, in which case no more cuts other than $p_{\tilde{0}}=0$ can be
introduced and~\eqref{est} is the best possible lower bound. We categorize this
worst case in the following definition in the language of~\eqref{estimate}
and~\eqref{esti}.

\begin{definition} Given a normal basis $n_0,\cdots,n_{m_{+}}$
at a point of $M_{+}$ with the usual $A_i,B_i,C_i,1\leq i\leq m_{+}$,
and the normalization
as in~\eqref{0a},~\eqref{BC} and~\eqref{A}
with $r$ the rank of both $B_1$
and $C_1$, let $p_0,\cdots,p_{m_{+}}$ be the associated components of the second fundamental
form. 

Let ${\mathbb C}^{m_{-}}\simeq {\mathbb C}E_{+}$,
${\mathbb C}^{m_{-}}\simeq {\mathbb C}E_{-}$ and
${\mathbb C}^{m_{+}}\simeq {\mathbb C}E_{0}$ be parametrized by $x,y$ and $z$ respectively,
where $E_{+},E_{-}$ and $E_0$ are the eigenspaces of $S_0$ with eigenvalues
$1.-1$ and $0$, respectively. Let $x:=(x_1,x_2),y:=(y_1,y_2)$ and $z:=(z_1,z_2)$
with $x_2,y_2,z_2\in{\mathbb C}^r$. 

We say a normal basis element $n_l,l\geq 2,$ is {\bf $r$-null} if $p_l$
is identically zero when we restrict it to the linear constraints
\begin{equation}\label{cons}
y_1=\iota x_1,\quad y_2=-x_2,\quad z_2=\sigma^{-1}(\Delta+\iota I)x_2,
\quad \iota=\pm \sqrt{-1}.
\end{equation}

We say the normal basis, always understood to be 
with the normalization
~\eqref{0a},~\eqref{BC} and~\eqref{A},
is $r$-null if\, {\bf all} its basis
elements $n_l,l\geq 2,$ are $r$-null. 
\end{definition}

\begin{lemma}\label{rnull} Conditions as given in the above definition,
a normal basis element $n_l$ is $r$-null if and
only if
the upper left $(m_{-}-r)$-by-$(m_{+}-r)$ block of $B_l$ and $C_l$
of $S_{l}$ are zero.
\end{lemma}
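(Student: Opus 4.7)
The plan is to parametrize $\mathscr{S}_\lambda$ by the free parameters $(x_1, x_2, z_1)$ using \eqref{cons}, substitute directly into $p_l(X,X) = 2(x^{tr} A_l y + x^{tr} B_l z + y^{tr} C_l z)$, and read off the algebraic conditions for the resulting polynomial in $(x_1, x_2, z_1)$ to vanish identically. Decomposing each of $A_l, B_l, C_l$ into $2 \times 2$ blocks $A_l^{ij}, B_l^{ij}, C_l^{ij}$ according to the splits $E_\pm = \mathbb{C}^{m_- - r}\oplus\mathbb{C}^r$ and $E_0 = \mathbb{C}^{m_+ - r}\oplus\mathbb{C}^r$, the upper-left blocks $B_l^{11}$ and $C_l^{11}$ are precisely the matrices in the lemma statement.

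The decisive observation is that $z_1$ enters $p_l|_{\mathscr{S}_\lambda}$ only linearly. After substituting $y = (\iota x_1, -x_2)$ and $z = (z_1, \sigma^{-1}(\Delta + \iota I)x_2)$, the vector-valued coefficient of $z_1$ works out to
\[
2\bigl[\bigl((B_l^{11})^{tr} + \iota (C_l^{11})^{tr}\bigr)x_1 + \bigl((B_l^{21})^{tr} - (C_l^{21})^{tr}\bigr)x_2\bigr].
\]
Vanishing for all $x_1, x_2$ forces the two conditions $B_l^{11} + \iota C_l^{11} = 0$ and $B_l^{21} = C_l^{21}$. The second is automatic from the $(1,2)$-block of the sixth identity in \eqref{conv} with $(i,j) = (l,1)$, since the normalization $B_1 = C_1 = \mathrm{diag}(0, \sigma)$ reduces that block to $(B_l^{21})^{tr}\sigma = (C_l^{21})^{tr}\sigma$ with $\sigma$ invertible. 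The first condition, imposed for $\iota = \pm\sqrt{-1}$ and using that $B_l, C_l$ are real, separates into $B_l^{11} = 0$ and $C_l^{11} = 0$. This establishes necessity.

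For sufficiency, assuming $B_l^{11} = C_l^{11} = 0$, I would show that the residual $z_1$-independent quadratic form in $(x_1, x_2)$ vanishes automatically as a consequence of \eqref{conv} together with the normalization \eqref{BC}--\eqref{A}. The pure $x_1^2$ coefficient is proportional to $A_l^{11}$, whose symmetric part is zero by the $(1,1)$-block of identity~(1) at $(l, 1)$. For the bilinear $x_1 x_2$ coefficient, combine the off-diagonal piece of identity~(1) giving $(A_l^{21})^{tr} = A_l^{12}\Delta - 2 B_l^{12}\sigma$ with the identity~(3) relation $C_l^{12} = B_l^{12}\Delta - A_l^{12}\sigma$; the coefficient then collapses to a scalar multiple of $B_l^{12}(\Delta^2 + I - 2\sigma^2)\sigma^{-1}$, which is killed by $2\sigma^2 = I + \Delta^2$ (identity~(1) at $i=j=1$ together with \eqref{BC}--\eqref{A}). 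The pure $x_2^2$ symmetric part is handled analogously using the $(2,2)$-block reductions of identities~(1), (2), and~(6), where the commutativity of $\sigma$ and $\Delta$ on matching blocks enforces an identical cancellation. The chief technical hurdle is organizing this last cancellation cleanly; the rest is routine block arithmetic.
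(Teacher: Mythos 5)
Your overall strategy is the paper's: restrict $p_l$ via \eqref{cons}, expand in the free variables $x_1,x_2,z_1$, and kill coefficients. The necessity half is essentially identical to the paper's reading of \eqref{substi} (the $x_1\otimes z_1$ coefficients are $B_l^{11}+\iota C_l^{11}$ with real entries and $\iota=\pm\sqrt{-1}$), and your observation that $B_l^{21}=C_l^{21}$ is automatic is exactly the paper's \eqref{e}. Your handling of the $x_1x_2$ coefficient in the converse is a correct variant: where the paper uses the third and fourth identities of \eqref{conv} (its \eqref{b} and \eqref{d}), you use the third identity plus the $(1,2)$-block of the first identity together with $2\sigma^2=I+\Delta^2$; given \eqref{b}, these are equivalent, and your cancellation (as well as the skew-symmetry of $A_l^{11}$ from the $(1,1)$-block of the first identity, which the paper leaves implicit) checks out.

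The gap is in the pure $x_2\otimes x_2$ term. Writing $h=c-f$ as in the paper's converse computation, what must vanish is the symmetric part of $-\delta+h\sigma^{-1}(\Delta+\iota I)$, i.e.\ both $h\sigma^{-1}+\sigma^{-1}h^{tr}=0$ (your identity (6), fine — this is \eqref{hs}) and $\delta+\delta^{tr}=h\sigma^{-1}\Delta-\sigma^{-1}\Delta h^{tr}$. The latter cannot be obtained from the $(2,2)$-block reductions of identities (1), (2), and (6) as you assert: the $(2,2)$-blocks of (1) and (2) at $(i,j)=(l,1)$ give $\Delta\delta^{tr}-\delta\Delta=-2(c\sigma+\sigma c^{tr})$ and $\delta^{tr}\Delta-\Delta\delta=-2(f\sigma+\sigma f^{tr})$, whose difference only pins down the commutator $[\Delta,\delta+\delta^{tr}]=-2(h\sigma+\sigma h^{tr})$. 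That determines nothing about $\delta+\delta^{tr}$ on the part where $\Delta$ degenerates; in particular when $\Delta=0$ (spectral data $(\sigma,\Delta)=(I/\sqrt{2},0)$, a case that genuinely occurs later in the paper) it is vacuous, whereas the needed relation forces $\delta+\delta^{tr}=0$. The missing ingredient is the fifth identity of \eqref{conv} at $(i,j)=(l,1)$, which the paper computes as \eqref{ee}, namely $\sigma(\delta+\delta^{tr})\sigma-\sigma\Delta h+h^{tr}\sigma\Delta=0$, and which, combined with \eqref{hs}, is exactly the required relation \eqref{del}. Insert that identity and your sufficiency argument closes, at which point it coincides with the paper's proof.
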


\begin{proof} Suppose $n_l$ is $r$-null. Then $p_l$
restricted to the linear constraint in the definition is
\begin{equation}\label{substi}
p_l=\sum_{\alpha=1, p=1}^{m_{-}\,-r,\,m_{+}\,-r}
(S^l_{\alpha p}+\iota T^l_{\alpha p})x_\alpha z_p +\; {\rm other\; terms},
\end{equation}
where
\begin{equation}\label{100}
S^l_{\alpha p}:=\langle S(X_\alpha,Z_p),n_l\rangle,\quad
T^l_{\alpha p}:=\langle S(Y_\alpha,Z_p),n_l\rangle
\end{equation}
for some orthonormal basis $X_\alpha,Y_\alpha,Z_p$ of $E_{+},E_{-},E_0$,
respectively. Therefore, 
$$
S^l_{\alpha p}=T^l_{\alpha p}=0
$$
for $1\leq\alpha\leq m_{-}-r$ and $1\leq p\leq m_{+}-r$.

Conversely, suppose~\eqref{100} is true, from which we first derive
some identities. Let $A_1,B_1,C_1$ be normalized as in~\eqref{BC} and~\eqref{A}.
Write
$$
A_l:=\begin{pmatrix}\alpha&\beta\\\gamma&\delta\end{pmatrix},
\quad B_l:=\begin{pmatrix}0&d\\b&c\end{pmatrix},
\quad C_l:=\begin{pmatrix}0&g\\e&f\end{pmatrix},
$$
where $\delta,c,f$ are of size $r$-by-$r$. The third identity
of~\eqref{conv} applied to $i=l$ and $j=1$, with the property
$$
\sigma\Delta=\Delta\sigma,
$$
gives
\begin{equation}\label{b}
\beta-d\sigma^{-1}\Delta+g\sigma^{-1}=0,
\end{equation}
while the fourth identity gives
\begin{equation}\label{d}
d\sigma^{-1}+\gamma^{tr}+g\sigma^{-1}\Delta=0.
\end{equation}
Meanwhile, the sixth identity arrives at
\begin{equation}\label{e}
b=e,\quad c^{tr}\sigma+\sigma c=f^{tr}\sigma+\sigma f.
\end{equation} 
In particular, writing
$$
h:=c-f,
$$
we obtain
\begin{equation}\label{h}
\sigma h+h^{tr}\sigma=0.
\end{equation}
Now, we can rewrite~\eqref{h} as
$$
\sigma(h\sigma^{-1}+\sigma^{-1}h^{tr})\sigma=0,
$$
from which we see
\begin{equation}\label{hs}
h\sigma^{-1}+\sigma^{-1}h^{tr}=0.
\end{equation}

Next, the fifth identity of~\eqref{conv} asserts
\begin{equation}\label{ee}
\sigma(\delta+\delta^{tr})\sigma-\sigma\Delta h+h^{tr}\sigma\Delta=0,
\end{equation}
or equivalently,
$$
\delta+\delta^{tr}-\Delta h\sigma^{-1}+\sigma^{-1}h^{tr}\Delta=0,
$$
or if we employ~\eqref{hs}, which is $h\sigma^{-1}=-\sigma^{-1}h^{tr}$,
we can rewrite it as
\begin{equation}\label{del}
\delta+\delta^{tr}-h\sigma^{-1}\Delta+\sigma^{-1}\Delta h^{tr}=0.
\end{equation}

In general,
$$
p_l/2= x^{tr}A_l y+x^{tr}B_l z+y^{tr}C_l z;
$$
setting $x=x_1+x_2,y=y_1+y_2,z=z_1+z_2$, and employing~\eqref{cons},
we can rewrite it in terms of the independent variables $x_1,x_2$ and $z_1$ as
\begin{equation}\label{p2}
\aligned
&p_l/2\\
&=x_1^{tr}(-\beta+\tau\gamma^{tr}+d\sigma^{-1}(\Delta+\tau I)+\tau g\sigma^{-1}(\Delta+\tau I))x_2
+x_2^{tr}(b-e)z_1\\
&+x_2^{tr}(-\delta-\delta^{tr}+(c-f)\sigma^{-1}(\Delta+\tau I)+ ((c-f)\sigma^{-1}(\Delta+\tau I))^{tr})x_2/2\\
&=x_1^{tr}((-\beta+d\sigma^{-1}\Delta-g\sigma^{-1})+\tau(d\sigma^{-1}+\gamma^{tr}+g\sigma^{-1}\Delta))x_2
+x_2^{tr}(b-e)z_1\\
&+x_2^{tr}((-(\delta+\delta^{tr})+h\sigma^{-1}\Delta-\sigma^{-1}\Delta h^{tr})+\tau(h\sigma^{-1}+\sigma^{-1}h^{tr}))x_2/2\\
&=0
\endaligned
\end{equation}
by~\eqref{b},~\eqref{d},~\eqref{e},~\eqref{hs},~\eqref{del}.
\end{proof}

\begin{corollary}\label{grk} Condition A of Ozeki and Takeuchi is equivalent to that
all normal bases are $0$-null at a point of Condition A.
\end{corollary}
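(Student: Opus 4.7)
The plan is to translate Condition A (a basis-independent statement about common kernels of shape operators) into the matrix-theoretic content of $B_l$ and $C_l$, and then invoke Lemma~\ref{rnull}. The key observation is that when $r=0$ the normalization~\eqref{BC}--\eqref{A} degenerates: $\sigma$ and $\Delta$ are empty, so $B_1=C_1=0$ automatically, and the ``upper left $(m_-{-}r)\times(m_+{-}r)$ block'' in Lemma~\ref{rnull} becomes the full matrix. Thus $n_l$ is $0$-null if and only if $B_l$ and $C_l$ vanish identically, and a normal basis is $0$-null precisely when $B_l=C_l=0$ for every $l\ge 1$.

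For the forward direction, assume Condition A holds at a point $p$, so all shape operators $S_n$ share a common kernel $K$ of dimension $m_+$. For any choice of normal basis $(n_0,n_1,\ldots,n_{m_+})$, normalize $(n_0,n_1)$ with spectral data $(\sigma,\Delta)$; Remark~\ref{RK} gives $m_+-r=\dim(\ker S_0\cap \ker S_1)=\dim K=m_+$, forcing $r=0$. For each $l\ge 2$, the blocks $B_l:E_0\to E_+$ and $C_l:E_0\to E_-$ encode the restriction of $S_l$ to $E_0=K$, which vanishes by Condition A. Hence $B_l=C_l=0$, so $n_l$ is $0$-null, and the entire basis is $0$-null.

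For the reverse direction, if every normal basis at $p$ is $0$-null, then in particular the normalized basis has $B_a=C_a=0$ for all $a\ge 1$. Every $S_a$ then vanishes on $E_0$, and since the eigenvalue multiplicities $m_+,m_-,m_-$ of $0,+1,-1$ are preserved for every normal combination $S_n=\sum c_a S_a$, the kernel of $S_n$ is exactly $E_0$, independent of the choice of $n$; that is, Condition A holds at $p$. I do not expect a substantive obstacle here: once the degeneracy of the normalization at $r=0$ is parsed correctly inside Lemma~\ref{rnull}, and Remark~\ref{RK} is used to transport the geometric hypothesis into the algebraic one, the corollary is a direct bookkeeping consequence.
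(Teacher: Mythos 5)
Your proof is correct and follows the paper's route: the paper simply notes the corollary is immediate from Lemma~\ref{rnull}, and your argument is exactly that deduction spelled out, using Remark~\ref{RK} to identify $r=0$ with the common kernel being all of $E_0$ and the degenerate case of Lemma~\ref{rnull} to identify $0$-nullity with $B_l=C_l=0$. No gap; it is the same approach with the bookkeeping made explicit.
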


\begin{proof} The statement follows immediately from Lemma~\ref{rnull}.
\end{proof}

\begin{remark} The calculation in Section~$\ref{subsec}$ shows that there the normal basis associated with the displayed $B_1,\cdots,B_7$ is $4$-null.
\end{remark}

\begin{corollary}\label{COR} Let $(m_{+},m_{-})=(7,8)$.
Let $\lambda\in{\mathcal Q}_6$ be given in~\eqref{quadric}
with $S_{\tilde{0}}$ and $S_{\tilde{1}}$ normalized as in Convention~$\ref{convention}$
and~\eqref{BC} and~\eqref{A}.
Suppose
$$
r:=\sup_{\lambda\in{\mathcal Q}_6} r_\lambda.
$$
Then
the upper left $(m_{-}-r)$-by-$(m_{+}-r)$ corner of $B_{\tilde{l}}$ and $C_{\tilde{l}}$
of $S_{\tilde{l}}$ are zero, $2\leq l\leq 7$,
for all $\lambda\in{\mathcal Q}_6$. That is, the basis elements
$\tilde{n}_l,\geq 2,$ are $r$-null.
\end{corollary}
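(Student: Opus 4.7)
The plan is to exploit the supremum definition of $r$ via a one-parameter rotation of the 2-plane $\lambda$ inside $\mathcal{Q}_6$ that couples $\tilde{n}_1$ with $\tilde{n}_l$. By Lemma~\ref{rnull}, $r$-nullity of $\tilde{n}_l$ for $l\ge 2$ is equivalent to the assertion that the upper-left $(m_--r)\times(m_+-r)$ block of $B_{\tilde{l}}$ and $C_{\tilde{l}}$ vanishes, which is precisely what I would target directly.

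The main step is to prove the assertion at any $\lambda\in\mathcal{Q}_6$ with $r_\lambda=r$; such $\lambda$ form a Zariski-open dense subset $U\subset\mathcal{Q}_6$ by upper semicontinuity of rank. Fix such a $\lambda$, fix $l\ge 2$, and for small $\theta\in\mathbb{R}$ introduce the rotated 2-plane
\begin{equation*}
\lambda'(\theta):=\mathrm{span}\bigl(\tilde{n}_0,\;\cos\theta\,\tilde{n}_1+\sin\theta\,\tilde{n}_l\bigr)\in\mathcal{Q}_6.
\end{equation*}
Because $S_{\tilde{0}}$ is unchanged, the eigenspaces $E_+,E_-,E_0$ are fixed, and in this basis the $B$-block of the rotated direction is $B_{\tilde{1}'(\theta)}=\cos\theta\,B_{\tilde{1}}+\sin\theta\,B_{\tilde{l}}$. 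Writing
\begin{equation*}
B_{\tilde{l}}=\begin{pmatrix} d & D \\ b & c\end{pmatrix}
\end{equation*}
with $d$ of size $(m_--r)\times(m_+-r)$ and $c$ of size $r\times r$, the lower-right block of $B_{\tilde{1}'(\theta)}$ is $\cos\theta\,\sigma+\sin\theta\,c$, invertible for $\theta$ small and nonzero. The Schur complement identity then gives
\begin{equation*}
\mathrm{rank}\,B_{\tilde{1}'(\theta)} = r + \mathrm{rank}\bigl(d-\sin\theta\,D(\cos\theta\,\sigma+\sin\theta\,c)^{-1}b\bigr),
\end{equation*}
and by lower semicontinuity the perturbed rank is at least $\mathrm{rank}(d)$ for $\theta$ sufficiently small. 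Combined with $\mathrm{rank}\,B_{\tilde{1}'(\theta)}=r_{\lambda'(\theta)}\le r$, this forces $d=0$. The identical argument with $C$ in place of $B$, using that~\eqref{BC} normalizes $C_{\tilde{1}}=B_{\tilde{1}}$ so that $\mathrm{rank}\,C_{\tilde{1}'(\theta)}=r_{\lambda'(\theta)}\le r$ as well, yields the vanishing of the top-left $(m_--r)\times(m_+-r)$ block of $C_{\tilde{l}}$.

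To extend from $U$ to all of $\mathcal{Q}_6$, I would observe that the prescribed sub-block of $B_{\tilde{l}}$ and $C_{\tilde{l}}$ depends algebraically on $\lambda$ once the Convention~\ref{convention} normalization is trivialized by local algebraic sections, so the vanishing condition is closed and propagates from the Zariski-dense $U$ to the closure $\overline{U}=\mathcal{Q}_6$. The main technical obstacle is precisely this extension: the $\sigma$-block of $B_{\tilde{1}}$ has dimension $r_\lambda$, which drops outside $U$, so one must either refine the stratification $\mathcal{L}_j$ from~\eqref{L} and handle each stratum by an auxiliary rotation argument with a smaller target block, or recast the claim intrinsically as the vanishing of $p_{\tilde{l}}$ on $\mathscr{S}_\lambda$ and argue that this vanishing is preserved under specialization of $\lambda$.
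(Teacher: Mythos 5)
Your proposal is correct and follows essentially the same route as the paper: the paper's proof also rotates within the pencil, setting $B(\theta)=\cos\theta\,B_{\tilde{1}}+\sin\theta\,B_{\tilde{l}}$, uses that its rank is bounded by $r$, and lets $\theta\to 0$ to force the upper-left corner to vanish (phrased there via convergence of the $r$-dimensional column space $V^\theta$ to $V^0$ rather than your Schur-complement computation), with the same argument applied to $C$. The passage from the generic locus $\{r_\lambda=r\}$ to all of $\mathcal{Q}_6$ is likewise handled in the paper only by "passing to the limit," so your flagged closedness/specialization step is at the same level of detail as the original.
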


\begin{proof} Pick a generic $\lambda_0\in{\mathcal Q}_6$ at which
$r_{\lambda_0}=r$. Without loss of generality, at $\lambda_0,$ the 2-plane spanned by the frame $(\tilde{n}_0,\tilde{n}_1),$   let us consider $\tilde{n}_2$ with $S_{\tilde{0}}$
and $S_{\tilde{1}}$ normalized as usual by~\eqref{BC} and~\eqref{A}. Set
$$
B_{\tilde{2}}=\begin{pmatrix}a&d\\b&c\end{pmatrix},\quad
C_{\tilde{2}}=\begin{pmatrix}h&g\\e&f\end{pmatrix},
$$
where $c$ and $f$ are of size $r$-by-$r$. We show $a=h=0$.

Let $e_1,\cdots,e_8$ be the standard
(column) basis vectors of ${\mathbb R}^8$.
Consider the 8-by-7 $B(\theta):=\cos(\theta) B_{\tilde{1}}+\sin(\theta) B_{\tilde{2}}$. We have
$$
B(\theta)=\begin{pmatrix}\sin(\theta) a&\sin(\theta) d\\\sin(\theta) b&\cos(\theta)\sigma+\sin(\theta) c\end{pmatrix}.
$$
For a generic choice of $\theta$, the last $r$ columns of $B(\theta)$ are
linearly independent, as is so for those of $\sigma$ at $\theta=0$, which span the column space $V^\theta$ of $B(\theta)$ of dimension $r$.
Note that, dividing out by $\sin(\theta)$, each of the first $7-r$ column vectors of $B_{\tilde{2}}$ belongs to
$V^\theta$. Letting $\theta$ approach zero, we see these $7-r$ vectors also belong
to $V^0$, which is spanned by $e_{9-r},\cdots,e_8$. It follows that $a=0$.
Likewise, $h=0$. This shows that the statement is true for all generic $\lambda\in{\mathcal Q}_6$.
Hence, it is true for all $\lambda\in{\mathcal Q}_6$ by passing to the limit.
\end{proof}

\begin{remark}\label{importantremark} The arguments in Corollary~${\ref{COR}}$
can be strengthened
as follows. Notation as in Corollary~$\ref{COR}$, suppose
$\lambda(\theta),0\leq \theta\leq 1,$ is an analytic curve in ${\mathcal Q}_6$ with $\lambda$ spanned by an oriented frame
$(\tilde{n}_0,\tilde{n}(\theta)),$ where $\tilde{n}(\theta)\perp \tilde{n}_0$ with $\tilde{n}(0)=\tilde{n}_{1}$. Denote by $B(\theta)$
the $B$-block of the shape operator $S_{\tilde{n}(\theta)}$ and suppose $B(0)$ is
normalized as in~\eqref{BC} with rank $r$.

Assume the rank of $B(\theta)=r$ for generic $\theta$.
Then generic $B(\theta)$ has the property that the last $r$ columns are independent
as is the case for $B(0)$. Let us denote the matrix of
the first
$7-r$ columns of $B(\theta)$ by
$$
\begin{pmatrix}a(\theta)\\b(\theta)\end{pmatrix},
$$
where $a(\theta)$ is of size $(8-r)$-by-$(7-r)$. 

Suppose $a(\theta)\neq 0$. It is well-known in analytic curve theory that we can choose the 
Frenet frame $\tilde{n}_2,\cdots,\tilde{n}_7$ such that

\begin{equation}\label{ntheta}
\tilde{n}(\theta)=c_1(\theta){\tilde n}_1+\cdots+c_7(\theta){\tilde n}_7
\end{equation}
for some analytic functions $c_1,\cdots,c_7$, where 
$$
c_1(0)=1,\quad c_l(\theta)=\theta^{k_l}d_l(\theta),\quad d_l(0)\neq 0,\quad k_2<\cdots<k_7, \quad l\geq 2,
$$
where ${\tilde n}_2$ is tangent to $\tilde{n}(\theta)$ with contact order $k_2$ at\, $\theta=0$.

Dividing through by $\theta^{k_2}$, it follows that each column of
$$
\begin{pmatrix}a(\theta)\\b(\theta)\end{pmatrix}/\theta^{k_2}
$$
lives in the vector space $V^\theta$ that converges to
$V^0$ spanned by $e_{9-r},\cdots,e_8$, as $\theta$ approaches zero.
This implies that $\tilde{n}_2$ is $r$-null as in the preceding corollary. Note that the rank of the matrix 

\begin{equation}\label{c1} 
A(\theta):=c_1(\theta) B_{\tilde{n}_1}+c_2(\theta) B_{\tilde{n}_2}
\end{equation}
is $=r$ generically. 

For simplicity of exposition, we assume $r=2$ now, though it is true for any $r$. Dividing through by $c_1(\theta)$, we may assume $c_1(\theta)=1$ in~\eqref{c1},
as far as the rank of $A(\theta)$ is concerned. Then the matrix $A(\theta)$, of rank $2$, takes the form
$$
A(\theta)= \begin{pmatrix}0&0&0&0&0&a_1&b_1\\\cdot&\cdot&\cdot&\cdot&\cdot&\cdot&\cdot\\0&0&0&0&0&a_6&b_6
\\c_1&c_2&c_3&c_4&c_5&1+\alpha&\beta\\d_1&d_2&d_3&d_4&d_5&\gamma&1+\delta\end{pmatrix},
$$
where all the variables $a,b,c,d,\alpha,\beta,\gamma,\delta$ are Taylor series with initial terms of the form $\theta^{k_2}$ about $\theta=0$. We leave it as a simple observation to see that if either the lower left or the upper right block of the matrix
is of rank $2$, then the other is zero; thus $B_{{\tilde n}_2}$, being $A(\theta)$ with the two diagonal\, $1$s removed, is of rank $\leq 2$. Otherwise, 
the upper and lower blocks are both of rank $\leq 1$, in which case we may assume $a_i=b_i=c_j=d_j=0$ for 
$1\leq i\leq 5,1\leq j\leq 4,$ via row and column reductions. 
Then with $c_5,d_5,a_6,b_6,\alpha$ and $\beta$ all essentially being constant multiples of\, $\theta^{k_2}$, it is readily seen that the $3$-by-$3$ lower right diagonal 
determinant being zero {\rm (}because $A(\theta)$ is of rank $2${\rm )} implies that the $3$-by-$3$ determinant without the two diagonal $1$s vanishes as well, i.e., 
that $B_{{\tilde n}_2}$ is again of rank $\leq 2$. $($For instance, we may divide by\, $\theta^{k_2}$ and let $\theta$ go to infinity.$)$
Consequently, the analytic 
\begin{equation}\label{generic}
\cos(\theta)B_{{\tilde n}_1}+\sin(\theta) B_{{\tilde n}_2}
\end{equation} 
is of rank $2$ for generic $\theta$ .

As an application, let ${\mathcal C}$ be an irreducible component 
of ${\mathcal L}_2$ {\rm (}see~\eqref{L} for definition{\rm )}
containing a point $\lambda$ spanned by ${\tilde n}_0$
and $\tilde{n}_1$, for which $r_{\lambda}=2$. Let $S^6$ be the standard unit sphere in ${\tilde n}_0^\perp$, the Euclidean space spanned by ${\tilde n}_1,\cdots,{\tilde n}_7$, and
let ${\mathcal C}_0$ be the connected component of the {\rm (}real{\rm )} variety 
$$
{\mathcal C}_0:=\{\tilde{n}\in S^6\subset {\tilde n}_0^\perp: \text{oriented $2$-plane spanned by}\;({\tilde n}_0,\tilde{n})\in{\mathcal C}\}
$$
containing ${\tilde n}_1$. The circle 
$$
\gamma(\theta):=\cos(\theta) {\tilde n}_1 +\sin(\theta) {\tilde n}_2
$$
spans the so-called {\bf tangent cone} ${\mathcal T}$ of ${\mathcal C}_0$ at ${\tilde n}_1$ in $S^6$, as ${\tilde n}_2$ by our construction above are tangents to
all possible analytic curves through ${\tilde n}_1$ in ${\mathcal C}_0$. By~\eqref{generic}, for a generic ${\tilde n}$ on ${\mathcal T}$, the $2$-plane spanned
by $({\tilde n}_0,\tilde{n})$ belongs to 
${\mathcal L}_2$.

Note, in particular, that when ${\tilde n}_1$ is a generic smooth point on ${\mathcal C}_0$, the tangent cone ${\mathcal T}\subset {\mathcal L}_2$ is just the standard unit sphere 
in the linear space spanned by ${\tilde n}_1$ and the tangent space of ${\mathcal C}_0$ at ${\tilde n}_1$.
\end{remark}

\vspace{2mm}

$r$-nullity turns out to be crucial for understanding the structure of an isoparametric hypersurface
when its multiplicity pair is $(7,8)$. As an immediate application, let us
sharpen the lower bound in~\eqref{est}.

\begin{lemma}\label{sharper} Let $(m_{+},m_{-})=(7,8)$. Fix $\lambda_0$ in an irreducible component ${\mathcal C}$ of 
${\mathcal L}_j$. Let $\lambda_0$ be spanned by the frame $(\tilde{n}_0,\tilde{n}_1)$ and extend it to the normal basis
$\tilde{n}_0,\tilde{n}_1,\tilde{n}_2,\cdots,\tilde{n}_7$,
with $S_{\tilde{0}}$ and $S_{\tilde{1}}$ normalized as in Convention~$\ref{convention}$,~\eqref{BC}, and~\eqref{A}.
Suppose no normal basis elements $\tilde{n}_2,\cdots,\tilde{n}_7$ are $j$-null.
Then over ${\mathcal C}$ we have
\begin{equation}\label{eee}
m_{-}\geq 2k-j-c_j,
\end{equation}
where $c_j$ is the codimension of ${\mathcal C}$ in ${\mathcal Q}_{k-1}$ {\rm (}see~\eqref{cod}{\rm )}.
\end{lemma}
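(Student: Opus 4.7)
The plan is to sharpen the dimension estimate \eqref{sub} by one, showing that under the hypothesis
$$
\dim\bigl(V_{k}\cap {\mathscr S}_{\lambda}\bigr)\leq m_{+}+m_{-}-j-2
$$
at the generic $\lambda\in{\mathcal C}$. Once this is in place, the incidence-space chain \eqref{W} reads $\dim({\mathcal W}_{j})\leq (m_{+}+m_{-}-2-j)+\dim({\mathcal L}_{j})$, and combined with \eqref{lower-bound} together with the codimension-$2$ requirement $\dim({\mathcal W}_{j})\leq \dim(V_{k})-2$ this becomes, by the same arithmetic that produced \eqref{est}, the desired $m_{-}\geq 2k-j-c_{j}$ of \eqref{eee}.

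To produce the extra cut, first observe that the assignment $n\mapsto p_{n}|_{{\mathscr S}_{\lambda}}$ is $\mathbb{R}$-linear and satisfies $p_{\tilde{1}}|_{{\mathscr S}_{\lambda}}=\iota_{\lambda}\,p_{\tilde{0}}|_{{\mathscr S}_{\lambda}}$, so the set of $j$-null directions in $\tilde{n}_{0}^{\perp}\cap\tilde{n}_{1}^{\perp}$ is a linear subspace. By hypothesis this subspace contains none of $\tilde{n}_{2},\ldots,\tilde{n}_{7}$ and is in particular proper; hence we may rotate the extension inside $\tilde{n}_{0}^{\perp}\cap\tilde{n}_{1}^{\perp}$ to obtain an orthonormal basis with $\tilde{n}_{2},\ldots,\tilde{n}_{k}$ spanning $\text{span}_{\mathbb{R}}(n_{0},\ldots,n_{k})\cap\tilde{n}_{0}^{\perp}\cap\tilde{n}_{1}^{\perp}$, and a generic such rotation preserves the non-$j$-nullity of all the $\tilde{n}_{l}$. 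Fix then $l\in\{2,\ldots,k\}$ with $\tilde{n}_{l}$ non-$j$-null; by Lemma \ref{rnull} the upper-left $(m_{-}-j)\times(m_{+}-j)$ block of $B_{\tilde{l}}$ or $C_{\tilde{l}}$ is nonzero.

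Reading the calculation \eqref{p2} contrapositively then shows that, under the parametrization \eqref{cons}, the restriction $p_{\tilde{l}}|_{{\mathscr S}_{\lambda}}$ is a nonzero polynomial consisting only of cross-terms of the form $x_{1}^{tr}(\cdot)x_{2}$, $x_{2}^{tr}(\cdot)z_{1}$, and $x_{2}^{tr}(\cdot)x_{2}$. Meanwhile \eqref{p0cut} shows $p_{\tilde{0}}|_{{\mathscr S}_{\lambda}}$ is a pure sum of $x_{1}$-squares, so $p_{\tilde{0}}$ and $p_{\tilde{l}}$ are algebraically independent on ${\mathscr S}_{\lambda}$ and together cut it in codimension two. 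Since $\tilde{n}_{0},\tilde{n}_{l}\in\text{span}_{\mathbb{R}}(n_{0},\ldots,n_{k})$, both polynomials lie in the defining ideal of $V_{k}$, so
$$
V_{k}\cap{\mathscr S}_{\lambda}\subset {\mathscr S}_{\lambda}\cap\{p_{\tilde{0}}=0\}\cap\{p_{\tilde{l}}=0\},
$$
whose dimension is bounded above by $\dim({\mathscr S}_{\lambda})-2=m_{+}+m_{-}-j-2$ in view of \eqref{EST}. Non-$j$-nullity of $\tilde{n}_{l}$ is an open algebraic condition in $\lambda$, so the bound propagates to a Zariski open dense subset of ${\mathcal C}$, which suffices for the incidence-space dimension count.

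The main obstacle will be the rotation step: transferring the non-$j$-nullity hypothesis from the originally given extension to one in which $\tilde{n}_{2},\ldots,\tilde{n}_{k}$ lie inside $\text{span}_{\mathbb{R}}(n_{0},\ldots,n_{k})$. This rests squarely on the linearity of $j$-nullity noted above, together with a generic-position argument inside the $(k-1)$-dimensional subspace $\text{span}_{\mathbb{R}}(n_{0},\ldots,n_{k})\cap\tilde{n}_{0}^{\perp}\cap\tilde{n}_{1}^{\perp}$. Once this is settled, the remainder is a mechanical repetition of the derivation of \eqref{est} with the sharper bound in place of \eqref{sub}.
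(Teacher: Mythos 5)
Your overall route is the same as the paper's: use a non-$j$-null direction to introduce one extra cut of ${\mathscr S}_\lambda$ beyond $p_{\tilde 0}=0$, conclude that $\dim(V_k\cap{\mathscr S}_\lambda)\le m_{+}+m_{-}-j-2$ for generic $\lambda$ in ${\mathcal C}$, and rerun the incidence-space arithmetic (your arithmetic and the final genericity remark match the paper). The genuine problem is your description of $p_{\tilde l}|_{{\mathscr S}_\lambda}$, which is backwards. Under the parametrization \eqref{cons}, the cross-terms you list --- $x_1^{tr}(\cdot)x_2$, $x_2^{tr}(\cdot)z_1$, $x_2^{tr}(\cdot)x_2$ --- are exactly the terms shown in \eqref{p2} to cancel via the Ozeki--Takeuchi identities \eqref{b}, \eqref{d}, \eqref{e}, \eqref{hs}, \eqref{del}; what survives when $\tilde n_l$ fails to be $j$-null is the displayed sum $\sum(S^l_{\alpha p}+\iota T^l_{\alpha p})x_\alpha z_p$ of \eqref{substi}, i.e.\ the $X_1$--$Z_1$ cross-terms produced by the nonzero upper left blocks of $B_{\tilde l}$ and $C_{\tilde l}$ furnished by Lemma~\ref{rnull} (this is \eqref{II} in the paper's proof). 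So ``reading \eqref{p2} contrapositively'' does not yield the term structure you assert, and your independence argument rests, as written, on a false premise. The conclusion is still reachable, and this is how the paper argues: the surviving part involves the free variables $z_1$, which do not occur in $p_{\tilde 0}|_{{\mathscr S}_\lambda}=\sum_{\alpha\le 8-j}x_\alpha^2$ (see \eqref{I}), so $p_{\tilde l}|_{{\mathscr S}_\lambda}$ cannot be proportional to $p_{\tilde 0}|_{{\mathscr S}_\lambda}$ and hence does not vanish identically on the irreducible quadric hypersurface it cuts, giving the extra dimension drop; but the argument must be routed through the nontriviality of the $x_\alpha z_p$ sum, not through the alleged absence of such terms.

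A secondary point: your rotation step claims more than the hypothesis gives. What you actually need is a single non-$j$-null direction inside $\text{span}(n_0,\dots,n_k)$, so that the cutting quadric is a linear combination of $p_0,\dots,p_k$ and vanishes on $V_k$. The $j$-null directions do form a linear subspace missing the six given $\tilde n_l$, but nothing in the stated hypothesis prevents that subspace from containing the $(k-1)$-plane $\text{span}(n_0,\dots,n_k)\cap\tilde n_0^\perp\cap\tilde n_1^\perp$; if it did, no rotation placing $\tilde n_2,\dots,\tilde n_k$ in that plane could keep them non-null, so ``a generic such rotation preserves the non-$j$-nullity of all the $\tilde n_l$'' is not a consequence of what you assumed. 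Either take the extension adapted to $\text{span}(n_0,\dots,n_k)$ from the outset and state the non-nullity hypothesis for that adapted extension (only one non-null $\tilde n_l$ with $2\le l\le k$ is needed), or argue separately that the null subspace cannot contain that plane; the blanket genericity claim by itself does not close this step.
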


\begin{proof} $r_\lambda=j$ for each $\lambda\in{\mathcal L}_j$ by definition.
By~\eqref{p0cut} and~\eqref{cons}, $p_{\tilde{0}}=0$ cuts ${\mathscr S}_{\lambda_0}$ in the variety
$$
\{(X_1,X_2,Y_1,Y_2,Z_1,Z_2)\},
$$
where $X_1=(x_1,\cdots,x_{8-j}), X_2=(x_{9-j},\cdots,x_8), Z_1=(z_1,\cdots,z_{7-j}),Z_2=(z_{8-j},\cdots,z_7),$ satisfy ($j$ is $r$ in~\eqref{cons}) 
\begin{equation}\label{I}
\sum_{\alpha=1}^{8-j} x_\alpha^2=0,
\end{equation}
$X_1=\pm\sqrt{-1}Y_1$, $X_2=-Y_2$, and $Z_2$ depends linearly on $X_2$ (and vice versa). Since
no bases are $j$-null, we may assume some $p_{\tilde{l}},l\geq 2,$ does not annihilate ${\mathscr S}_{\lambda_0}$,
so that Lemma~\ref{rnull} implies that in the expression (see~\eqref{substi})
\begin{equation}\label{II}
p_l=\sum_{\alpha=1, p=1}^{8-j,7-j}
(S^l_{\alpha p}+\pm\sqrt{-1} T^l_{\alpha p})x_\alpha z_p +\; {\rm other\; terms},
\end{equation}
the displayed sum is nontrivial.~\eqref{I} and~\eqref{II} imply that $p_{\tilde{0}}=p_{\tilde{l}}=0$
cuts down one more dimension in ${\mathscr S}_{\lambda_0}$, which remains true for a generic $\lambda\in{\mathcal C}$, so that the lower
bound in~\eqref{est} is reduced further by 1 to yield~\eqref{eee} for a generic $\lambda$. 

On the other hand, since those nongeneric $\lambda\in{\mathcal C}$ constitute a subvariety of codimension at least 1, the lower bound in~\eqref{eee} still holds ture over this subvariety.
\end{proof}

\section{Constraints on $1,2,3$-nullity}\label{constraints}

\begin{lemma}\label{1null} Let $(m_{+},m_{-})=(7,8)$.
Away from
points of Condition A on $M_{+}$,
no element of a normal basis can be $1$-null.
\end{lemma}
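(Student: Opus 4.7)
I argue by contradiction. Suppose that at a point $p\in M_{+}$ not of Condition A we have a normal basis normalized as in~\eqref{BC},~\eqref{A} with $r=1$ and that some $n_{l}$, $l\geq 2$, is $1$-null; relabel so $l=2$. By Lemma~\ref{rnull} the upper-left $7\times 6$ blocks of both $B_{2}$ and $C_{2}$ vanish, so I split
$$
A_{2}=\begin{pmatrix}\alpha&\beta\\ \gamma&\delta\end{pmatrix},\quad
B_{2}=\begin{pmatrix}0&d\\ b&c\end{pmatrix},\quad
C_{2}=\begin{pmatrix}0&g\\ e&f\end{pmatrix},
$$
with $\alpha$ of size $7\times 7$, along the $(7,1)\times(7,1)$ and $(7,1)\times(6,1)$ partitions.

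The first move is a systematic harvest of the Ozeki--Takeuchi identities~\eqref{conv} at $i,j\in\{1,2\}$, using the same block arithmetic that drives the proof of Corollary~\ref{inde}. I expect the first identity of~\eqref{conv} at $(i,j)=(1,2)$ to yield $\alpha=-\alpha^{tr}$, $c=0$, and $\sqrt{2}\,d=-\gamma^{tr}$; the second identity to yield $f=0$ and $\sqrt{2}\,g=-\beta$; the third identity at $(2,1)$ to yield $\delta=0$; the sixth identity to yield $b=e$ and $|d|=|g|$; the fourth identity at $(1,2)$ to yield $d^{tr}g=0$; and the top-right $7\times 1$ blocks of the first two identities at $i=j=2$ to yield $\alpha d=\alpha g=0$.

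The pivotal equation is the top-left $7\times 7$ block of the first identity at $i=j=2$, namely
$$
-\alpha^{2}+2\,dd^{tr}+2\,gg^{tr}=I_{7}.
$$
The perturbation $2(dd^{tr}+gg^{tr})$ has rank at most $2$, so $-\alpha^{2}$ has rank at least $5$; but $\alpha$ is a real skew $7\times 7$ matrix, hence of even rank, which forces $\mathrm{rank}\,\alpha=6$ and $\dim\ker\alpha=1$. Since $\alpha d=\alpha g=0$, both $d$ and $g$ lie in this one-dimensional kernel; then $d^{tr}g=0$ together with $|d|=|g|$ forces $d=g=0$. Substituting back, the displayed equation collapses to $-\alpha^{2}=I_{7}$, which is impossible on a real skew matrix of odd order---the desired contradiction.

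The principal obstacle is the disciplined tracking of the rectangular $(7,1)\times(6,1)$ block arithmetic needed to extract each relation above cleanly in the $r=1$ normalization, where the diagonal $\Delta$ in~\eqref{A} collapses to a single zero and $\sigma$ is a scalar; once these relations are assembled, the finishing blow is just the familiar parity obstruction to a complex structure on $\mathbb R^{7}$, the same mechanism underpinning the Clifford-module count in Corollary~\ref{inde}.
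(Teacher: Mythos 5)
Your proof is correct, and the block identities you only sketch do check out: with the $r=1$ normalization of \eqref{BC} and \eqref{A}, the first, second, third and sixth identities of \eqref{conv} at the index pairs you name give $c=f=\delta=0$, $\gamma^{tr}=-\sqrt{2}\,d$, $\beta=-\sqrt{2}\,g$, $b=e$, $|d|=|g|$; the top-right blocks of the first two identities at $i=j=2$ give $\alpha d=\alpha g=0$; and the bottom-right $1\times 1$ entry of the fourth identity at $(i,j)=(1,2)$ gives $d^{tr}\beta=0$, i.e.\ $d^{tr}g=0$. Up to that point you have essentially reproduced the paper's list \eqref{set}, so the first half coincides with the published argument; the difference is the endgame. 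The paper assumes $d\neq 0$, normalizes $d$ and $g$ by basis changes, splits on whether the first entry of $g$ vanishes, kills one branch by a $5\times 5$ Clifford $C_1$-module parity contradiction, and in the other pins down $|d|=|b|=1/2$ so that the eighth identity $b^{tr}(d^{tr}g+g^{tr}d)=0$ — which requires $b\neq 0$, supplied by $|\gamma|^2+2|b|^2=1$ — forces the contradiction, with the residual case $d=0$ settled (rather tersely) by the skew-orthogonal obstruction on ${\mathbb R}^7$. You replace all of this by the rank-parity count on $\alpha$: from $-\alpha^2+2dd^{tr}+2gg^{tr}=I_7$ the rank of the skew matrix $\alpha$ is even and at least $5$, hence equal to $6$, so $\ker\alpha$ is a line containing both $d$ and $g$, and $d^{tr}g=0$ together with $|d|=|g|$ forces $d=g=0$, after which $-\alpha^2=I_7$ is impossible in odd dimension. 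Both proofs bottom out in the same parity obstruction, but your route extracts $d^{tr}g=0$ directly from the fourth identity (so you never need $b\neq 0$ or the eighth identity), avoids the normalizations and the case split, and handles the $d=0$ endgame explicitly — a modest but genuine streamlining of the paper's argument.
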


\begin{proof} Set $(B_j,C_j),j\geq 2,$ to be of the form
\begin{equation}\label{e1}
B_j=\begin{pmatrix}0&d_j\\b_j&c_j\end{pmatrix},\quad
C_j=\begin{pmatrix}0&g_j\\e_j&f_j\end{pmatrix},
\end{equation}
for some real numbers $c_j$ and $f_j$. We show $d_j=g_j=c_j=f_j=0$.
 
Indeed, with

\begin{equation}\label{e2}
A_j=\begin{pmatrix}\alpha_j&\beta_j\\\gamma_j&\delta_j\end{pmatrix},\quad
A_1=\begin{pmatrix}I&0\\0&0\end{pmatrix},\quad
B_1=C_1=\begin{pmatrix}0&0\\0&1/\sqrt{2}\end{pmatrix},
\end{equation}
one derives easily $($we suppress the index for notational ease$)$, by setting $i=1,j\geq 2$,
in~\eqref{conv}, that $c=f=\delta=0$, and



\begin{eqnarray}\label{set}
\aligned
&\beta=-\sqrt{2}g,\;\gamma=-\sqrt{2}d^{tr},\;\alpha\gamma^{tr}=\alpha\beta=0,\; |d|=|g|,\\
&\alpha\alpha^{tr}+\beta\beta^{tr}+2dd^{tr}=I,\; b=e,\; |\gamma|^2+2|b|^2=1.
\endaligned
\end{eqnarray}
Suppose $d\neq 0$. By a basis change we may assume
$$
d=(t,0,0,\cdots,0)^{tr}
$$
for some positive number $t$. The skew-symmetry of $\alpha$ and the
second and third identities of~\eqref{set} ensure that the first row and column of $\alpha$
are zero. 

If the first entry of $g$ is zero, by a basis change we may assume
$$
g=(0,s,0,0,\cdots,0)
$$
for some positive $s$, so that the third identity implies that the first two rows and columns of $\alpha$
are zero. Ignoring these trivial rows and columns of $\alpha$, we see that
the remainder of it, denoted $\tilde{\alpha}$ of size 5 by 5, is skew-symmetric, orthogonal and satisfies
$$
\tilde{\alpha}^2=-Id.
$$
That is ${\mathbb R}^5$ is acted on by $\tilde{\alpha}$ as a Clifford $C_1$-module,
so that 5 is divisible by 2, a contradiction. Therefore, the first entry of $g$
is not zero. In particular, the fifth identity implies that all the other entries
of $g$ are zero. Meanwhile, the first, fourth and fifth identities derive
$|d|=1/2=t$, so that
$$
\gamma=(-\sqrt{2}/2,0,0,\cdots,0),\quad \beta=\pm\gamma, \quad d=\pm g,\quad |b|=1/2.
$$
But now the eighth identity of~\eqref{conv} for $i=j$ gives
\begin{equation}\label{bdgtr}
b^{tr}(d^{tr}g+g^{tr}d)=0.
\end{equation}
Consequently, we obtain $d=g=0$, which is contradictory.



\end{proof}

\begin{lemma}\label{2null} Let $(m_{+},m_{-})=(7,8).$ 
Away from points of Condition A on $M_{+}$,   
notation is as in~\eqref{e1}
and~\eqref{e2} with the spectral data change that now
$$
A_1=\begin{pmatrix}I&0\\0&\Delta\end{pmatrix},\quad
B_1=C_1=\begin{pmatrix}0&0\\0&\sigma\end{pmatrix},\quad
\sigma=sId,\quad \Delta=\begin{pmatrix}0&t\\-t&0\end{pmatrix}
$$
with $t=\sqrt{1-2s^2}$. 
\begin{description}
\item[(1)] If a normalized basis $n_0,n_1,\cdots,n_7$ is such that the generic rank of the
linear combinations of 
$B_2,\cdots,B_7\geq 5$, then it cannot be $2$-null.
\item[(2)] If the basis elements
$n_2,n_3,n_4$ are $2$-null, and generic linear combinations of $B_1,B_2,B_3,B_4$ are of rank $\leq 2$, 
 then $r_\lambda\leq 2$ for any $\lambda$ in the $3$-quadric of oriented $2$-planes of ${\mathbb R}^5$ linearly spanned by $n_0,\cdots n_4$. 
\end{description}
\end{lemma}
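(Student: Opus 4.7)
For Part (1), the argument is immediate from Lemma~\ref{rnull}: if every $n_l$ with $l \geq 2$ is 2-null, then each $B_l$ has its upper-left $(m_{-}{-}2)\times(m_{+}{-}2) = 6\times 5$ block equal to zero. Any linear combination $\sum_{l=2}^{7} c_l B_l$ inherits this block structure, confining its nonzero entries to the last two rows of $\mathbb{R}^8$ and the last two columns of $\mathbb{R}^7$. A block-rank count then yields total rank at most $2+2 = 4 < 5$, contradicting the hypothesis that the generic rank is at least $5$.

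For Part (2), the plan is to reduce to a claim about the common kernel of shape operators and then propagate it via a tangent cone argument. The key observation is that for any $\lambda \in \mathcal{Q}_3$ with associated real $2$-plane $P$, one has $r_\lambda = m_{+} - \dim\bigl(\bigcap_{n \in P}\ker S_n\bigr)$, which depends only on $P$. First, by lower semi-continuity of rank, the hypothesis that generic $\sum_{i=1}^4 c_i B_i$ has rank $\leq 2$ forces \emph{every} such linear combination to have rank $\leq 2$. Since the $B$-block of the normalized $S_{\tilde n_1}$ for $\tilde n_1 = \sum_{i=1}^{4} c_i n_i \in \mathrm{span}(n_1,\ldots,n_4)$ is precisely $\sum c_i B_i$, this gives $r_\lambda \leq 2$ for every $2$-plane $(n_0, \tilde n_1)$ through $n_0$ in $\mathbb{R}^5$, producing a $3$-real-dimensional subfamily of $\mathcal{Q}_3$ contained in the closed locus $\{r_\lambda \leq 2\}$.

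To extend this upper bound to $2$-planes not through $n_0$, I invoke the tangent cone mechanism of Remark~\ref{importantremark}. At a generic $\lambda_0=(n_0,\tilde n_1)$ with $r_{\lambda_0}=2$, the tangent cone of the irreducible component of $\mathcal{L}_2$ through $\lambda_0$ is the standard unit sphere in the span of $\tilde n_1$ and the tangent space at $\tilde n_1$. By combining the $2$-nullity of $n_2, n_3, n_4$ with the higher-order Taylor expansion analysis of Remark~\ref{importantremark}, one constructs analytic curves $\lambda(\theta)$ in $\mathcal{L}_2$ whose tangent directions genuinely move $\tilde n_0$ away from $n_0$. Iterating the construction and taking Zariski closure produces a closed subvariety of $\mathcal{Q}_3$ whose complex dimension matches that of $\mathcal{Q}_3$; since $\mathcal{Q}_3$ is irreducible of complex dimension~$3$, the containment must be an equality, whence $r_\lambda \leq 2$ for all $\lambda \in \mathcal{Q}_3$.

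The main obstacle will be the transversality step in the last paragraph: showing that the tangent cones at generic points of $\mathcal{L}_{\leq 2}$ produce directions transverse to the $3$-dimensional subfamily of $2$-planes through $n_0$. This is delicate because the subfamily is only a totally real subset of the complex quadric $\mathcal{Q}_3$, so standard Zariski density arguments do not directly apply. The $2$-nullity of $n_2, n_3, n_4$ combined with the identities~\eqref{conv} will have to be leveraged through the higher-order analysis in Remark~\ref{importantremark} to construct the required transverse analytic curves.
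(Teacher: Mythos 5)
Your argument for item (1) is correct and is essentially the paper's: $2$-nullity forces each $B_l$, $l\geq 2$, into the form $\begin{pmatrix}0&d_l\\ b_l&c_l\end{pmatrix}$ with a $6\times 5$ zero block, and then every linear combination has rank at most $2+2=4<5$.

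For item (2), however, there is a genuine gap. What you actually establish is only $r_\lambda\leq 2$ for the $2$-planes that \emph{contain} $n_0$ (and even there, note that $r_\lambda$ is governed by both the $B$- and $C$-blocks of $S_{\tilde n_1}$; you need the sixth identity of~\eqref{conv}, which yields $\ker(\sum_i c_iB_i)=\ker(\sum_i c_iC_i)$, to reduce $r_\lambda$ to the rank of $\sum_i c_iB_i$ alone). The planes through $n_0$ form only a totally real $3$-dimensional family inside the real $6$-dimensional quadric ${\mathcal Q}_3$, and, as you concede, your mechanism for reaching the remaining planes is not carried out: the tangent-cone construction of Remark~\ref{importantremark} starts from analytic curves already known to lie in ${\mathcal L}_2$ and does not by itself manufacture the transverse directions you need, and no Zariski-closure argument can promote a totally real $3$-dimensional set to all of ${\mathcal Q}_3$ without new input. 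The missing input is structural: knowing that each individual combination $\sum_i c_iB_i$ has rank $\leq 2$ does not control $\dim(\ker S_c\cap\ker S_{c'})$ for two generic directions $c,c'$ in the span of $n_0,\dots,n_4$, since that intersection depends on how the kernels and images of the various $B_c$ interact. The paper supplies exactly this: using~\eqref{conv} it shows that either all the $b_i$ blocks vanish, in which case every $S_c$ in the span shares a common $5$-dimensional kernel piece and $r_\lambda\leq 2$ follows at once, or, after frame changes, $c_2=c_3=c_4=0$ and the $b_i,d_i$ take the explicit forms of the sublemma in its proof; it then computes directly that the $7$-dimensional kernels of two generic $S_c,S_{c'}$ meet in dimension $5$ (four dimensions from $\ker B_c\cap\ker B_{c'}$ and one from the intersection of the images $V_c\cap V_{c'}$), so $r_\lambda=2$ generically and hence $r_\lambda\leq 2$ on all of ${\mathcal Q}_3$ by semicontinuity. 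Without an analogue of this analysis your proposal does not prove the statement for planes not through $n_0$, which is where the entire content of item (2) lies.
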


\begin{proof} To prove the first statement, let $n_0,n_1,\cdots,n_7$ be a 2-null basis.
Let $n=c_2n_2+\cdots+c_7n_7$ be a unit normal vector. Then
$$
B_n=\begin{pmatrix}0&d_n\\b_n&c_n\end{pmatrix},
$$
where $b_n,c_n,d_n$ are the corresponding linear combinations of $b_j,c_j,d_j,2\leq j\leq 7.$
It follows that the rank of $B_n$ is $\leq 4$ by a dimension count, a contradiction. 

To prove the second item, supposing first that all $b_1,\cdots,b_4$ are zero. We employ Remark~\ref{RK} below~\eqref{A} to calculate $r_\lambda$.

Since $B_1,\cdots,B_4$ and $C_1,\cdots,C_4$ are of the form
$$
B_i:=\begin{pmatrix}0&d_i\\0&c_i\end{pmatrix},\quad C_i=\begin{pmatrix}0&g_i\\0&f_i\end{pmatrix},
$$
where $c_i,f_i$ are of size $r$-by-$r$ ($r=2$; we are doing a general argument), a linear combination of $S_c:=c_0S_0+\cdots+c_4S_4$ assumes the form
\begin{equation}\label{sc}
S_c:=\begin{pmatrix}c_0I&A_c&B_c\\A_c^{tr}&-c_0I&C_c\\B_c^{tr}&C_c^{tr}&0\end{pmatrix},
\end{equation}
where $A_c,B_c,C_c$ are the linear combinations of $A_i,B_i,C_i$  with coefficients $c_i,1\leq i\leq 4.$ It follows that  the vector
$$
\begin{pmatrix}0\\0\\z\end{pmatrix},\quad z=\begin{pmatrix}z\\0\end{pmatrix},
$$
where $z$ is of size $(m_{+}-r)$-by-$1$, belongs to the kernel of $S_c$ for all $c$. Therefore, the kernels of any two $S_{c}$ and $S_{c'}$ intersect in a space of dimension at least $m_{+}-r$, so that by Remark~\ref{RK}
$$
r_\lambda\leq m_{+}-(m_{+}-r)=r,
$$ 
where $\lambda$ is the $2$-plane spanned by the two vectors 
$$
c_0n_0+\cdots+c_4n_4,\quad c_0'n_0+\cdots+c_4'n_4.
$$
Consequently, generic $r_\lambda$ for $\lambda\in{\mathcal Q}_3$ is $r$, where ${\mathcal Q}_3$ is the set of oriented $2$-planes in the Euclidean space spanned by $n_0,\cdots,n_4$.

Otherwise, we may assume  $b_2\neq 0$ now. The sixth identity of~\eqref{conv} for $i=1,j\geq 2$ gives
$$
b_j=e_j, \quad c_j-f_j=-(c_j-f_j)^{tr}.
$$
Meanwhile, the same identity for 
$i=j\geq 2$ derives  
\begin{equation}\label{c-f}
b_j^{tr}(c_j-f_j)=0,\quad d_j^{tr}d_j+c_j^{tr}c_j=g_j^{tr}g_j+f_j^{tr}f_j.
\end{equation}

Since $c_2-f_2$ is 2-by-2 and skew-symmetric, it
follows by~\eqref{c-f} that $c_2=f_2.$ Since
a generic linear combination of $b_2,b_3,b_4$ can be renamed to be $b_2$, it furthermore follows that
$$
c_j=f_j,\quad 2\leq j\leq 4,
$$
and so by the second identity of~\eqref{c-f}, we obtain

\begin{equation}\label{ddgg}
d_j^{tr}d_j=g_j^{tr}g_j, \quad 2\leq j\leq 4.
\end{equation}

The fifth identity of~\eqref{conv} for $i\geq 2,j=1$ asserts
$$
\sigma\delta_i\sigma-\sigma\Delta (c_i-f_i)\quad\text{is skew-symmetric},
$$
so that $\sigma\delta_i\sigma$ is skew-symmetric as $c_i=f_i$. Thus we deduce
$$
\delta_i=\begin{pmatrix}0&a_i\\-a_i&0\end{pmatrix}
$$
for some number $a_i$. This imposes one linear constraint. Hence we may assume $\delta_2=0$
in the linear span of $B_2,B_3,B_4$. (Note that with this frame change $b_2$ need not be nonzero anymore.)  

Now, the first and second identities of~\eqref{conv} for $i\geq 2,j=1$ result in
\begin{equation}\label{delta}
\delta^{tr}_i\Delta-\Delta\delta_i=-2s(f_i+f_i^{tr})=-2s(c_i+c_i^{tr})=-\delta_i\Delta+\Delta\delta_i^{tr}.
\end{equation}
It follows that
\begin{equation}\label{cskew}
c_i=f_i=\begin{pmatrix}0&p_i\\-p_i&0\end{pmatrix}, \quad 2\leq i\leq 3,
\end{equation}
for some numbers $p_i$, which imposes another linear constraint. We may
therefore assume
\begin{equation}\label{cfd}
c_2=f_2=\delta_2=0
\end{equation}
in the span of $B_2,B_3,B_4$. With~\eqref{cfd} the first and second identities of~\eqref{conv}
for $i=j=2$ give

\begin{eqnarray}\label{many}
\aligned
&\alpha_2\alpha_2^{tr}+\beta_2\beta_2^{tr}+2d_2d_2^{tr}=Id,\quad 
\alpha_2^{tr}\alpha_2+\gamma_2^{tr}\gamma_2+2g_2g_2^{tr}=Id,\\
&\gamma_2\gamma_2^{tr}+2b_2b_2^{tr}=Id,\quad
\beta_2^{tr}\beta_2+2b_2b_2^{tr}=Id,\\
&\alpha_2\gamma_2^{tr}=0,\quad \alpha_2^{tr}\beta_2=0,\\
&\alpha_2=-\alpha_2^{tr},
\endaligned
\end{eqnarray}
where we remark that the last identity comes from setting $i=2,j=1$ in the
first identity of~\eqref{conv}.

Suppose $d_2$ is of rank 2. Then $b_2=0$; or else $B_2$ written as in~\eqref{e1} would be of rank 3 by row reduction. 
Now, since generic linear combination of $b_2,b_3,b_4$ is nonzero, we may
assume $b_3\neq 0$. It follows that
$$
\cos(\theta)B_2+\sin(\theta)B_3
=\begin{pmatrix}0&\cos(\theta) d_2+\sin(\theta)d_3\\ \sin(\theta)b_3&\sin(\theta)c_3\end{pmatrix}
$$
is of rank at least 3 for a small angle $\theta$, because $d_2$ is of rank 2
and $b_3$ is of rank at least 1. Therefore, the generic
linear combination of $B_2,B_3,B_4$ is of rank $\geq 3$, a contradiction.

$d_2$ cannot be of rank 0. This is because otherwise
from~\eqref{ddgg} we obtain 
$$
d_2=g_2=0.
$$ 
Now~\eqref{b} and~\eqref{d} are just
\begin{equation}\label{bega}
\beta_j=(d_j\Delta-g_j)\sigma^{-1},\quad \gamma_j^{tr}=-(d_j+g_j\Delta)\sigma^{-1},j\geq 2;
\end{equation}
in particular, 
$$
\beta_2=\gamma_2=0.
$$
With~\eqref{many}, we arrive at
$$
A_2=\begin{pmatrix}\alpha_2&0\\0&0\end{pmatrix},\quad B_2=\begin{pmatrix}0&0\\b_2&0\end{pmatrix},\quad \alpha_2\alpha_2^{tr}=Id,\quad b_2b_2^{tr}=Id.
$$
The first and second identities of~\eqref{conv} for $i=2, j=3,4$ give
$$
\alpha_2\alpha_j=-\alpha_j\alpha_2,\quad \alpha_2\beta_j=0,\quad\alpha_2\gamma_j^{tr}=0, \quad j=3,4,
$$
from which there follows $\beta_j=\gamma_j^{tr}=0$, so that~\eqref{bega} implies $d_j=g_j=0,j=3,4$,
and so the first and the second identities of~\eqref{conv} derive
$$
\alpha_i\alpha_j=-\alpha_j\alpha_i,\quad \alpha_i^2=-Id,\quad 2\leq i\neq j\leq 4.
$$
However, this says $\alpha_2,\alpha_3,\alpha_4$ induce a Clifford
$C_3$-action on ${\mathbb R}^6$, so that 4 divides 6, a contradiction.

Thus, $d_2$ must be of rank 1. Now $\alpha_2$ cannot be of rank 6; otherwise, the fifth and sixth identities of`\eqref{many} force $\beta_2=\gamma_2=0$ and so~\eqref{bega} gives $d_2=g_2=0$, which is impossible.
Being skew-symmetric, $\alpha_2$ must then be of even rank $\leq 4$. We may thus write
\begin{equation}\label{abc2}
\alpha_2=\begin{pmatrix}\alpha&0\\0&0\end{pmatrix},
\quad\beta_2=\begin{pmatrix}0\\\beta\end{pmatrix},
\quad\gamma_2=\begin{pmatrix}0&\gamma\end{pmatrix},
\end{equation}
where $\alpha$ is of rank 0, 2, 4. $\beta$ is of size
6-by-2, 4-by-2, 2-by-2, and $\gamma$ is of size 2-by-6, 3-by-4,
2-by-2, respectively.

$\alpha$ cannot be of rank 0.
Suppose the contrary. $\beta$ and
$\gamma^{tr}$
are both of size 6-by-2. In particular, $d_2$ and $g_2$ are of the same form as $\beta_2$
and $\gamma_2$, respectively. 
The first identity of~\eqref{many} gives
$$
\beta_2\beta_2^{tr}=I-2d_2d_2^{tr}.
$$
Since the 6-by-6 $d_2d_2^{tr}$ is of rank at most 2 (because $d_2$ is of size 6-by-2), it has eigenvalue 0 counted
at least four times, so that $I-2d_2d_2^{tr}$ has eigenvalue 1 counted at least
4 times and so its rank is at least 4, which contradicts the fact that
$\beta_2\beta_2^{tr}$ is of rank at most 2 ( because $\beta_2$ is of size 6-by-2).

$\alpha$ cannot be of rank 2. Suppose the contrary. We remark that in general any $A_j$ and $B_j$
can be brought to the normalized form of $A_1$ and $B_1$ as in~\eqref{BC} and~\eqref{A}. 
That is, with an appropriate basis change we have
\begin{equation}\label{extra1}
B_j=\begin{pmatrix}0&0\\0&\sigma_j\end{pmatrix},\quad
A_j=\begin{pmatrix}I&0\\0&\Delta_j\end{pmatrix},
\end{equation}
where $\sigma_j$ is diagonal and the nonzero part of $\Delta_j$ is
skew-symmetric in the same form as $\sigma$ and $\Delta$ in~\eqref{BC} and~\eqref{A}. 
In particular, suppose $\sigma_j$
is of size 3-by-3, then $\Delta_j\Delta_j^{tr}$ has a zero eigenvalue so that one of the eigenvalues of $\sigma_j$ is $1/\sqrt{2}$. 

Now, as a consequence
of~\eqref{abc2} and~\eqref{bega}, we obtain
$$
d_2=\begin{pmatrix}0\\d\end{pmatrix},\quad g_2=\begin{pmatrix}0\\g\end{pmatrix},
$$
where $\beta,\gamma^{tr},d$ and $g$ are all of size 4-by-2. The first two
identities of~\eqref{many} give
\begin{equation}\label{bebe}
\beta\beta^{tr}+2dd^{tr}=Id,\quad \gamma^{tr}\gamma+2gg^{tr}=Id,
\end{equation}
from which it follows that the 4-by-4 $\beta\beta^{tr}$,
being of rank $\leq 2$, has eigenvalue 0 counted at least twice, so that $dd^{tr}$ has
eigenvalue $1/\sqrt{2}$ counted at least twice. That is, the 2-by-2 $d^{tr}d$ has eigenvalue $1/\sqrt{2}$ counted exactly twice, so that $d_2$ is of rank 2. The same argument in the paragraph below~\eqref{many} to exclude $d_2$ being of rank 2 then yields a contradiction.




So now the rank of $\alpha$ is $4$ (and we are assuming $d_2$ is of rank 1).

We may assume
\begin{equation}\label{gd}
d=\begin{pmatrix}p&0\\0&0\end{pmatrix},\quad
g=\begin{pmatrix}u&v\\w&z\end{pmatrix}.
\end{equation}

It is important to remark that $d$ can be put in the above diagonal form without changing the values of the normalized $A_1$ and $B_1$ in~\eqref{BC} and~\eqref{A}. In fact, we can first perform a row operation to bring $d$ to an upper triangular form without changing $\sigma$ in $B_1=C_1$. Now due to the fact that $\sigma=sI$, we can then perform a row operation to bring $d$ to the diagonal form. By doing so, we do have to conduct a row operation also on the rows of $\sigma$
to let $\sigma$ continue to be $sI$. 

We employ~\eqref{ddgg} to conclude that 
$$
v=z=0,\quad\quad u^2+w^2=p^2.
$$
 Moreover,~\eqref{bega} gives
\begin{equation}\label{gdgd}
\beta=s^{-1}\begin{pmatrix} -u&pt\\-w&0\end{pmatrix},\quad
\gamma^{tr}=s^{-1}\begin{pmatrix} -p&-tu\\0&-tw\end{pmatrix}.
\end{equation}
Substituting them into~\eqref{bebe} we obtain
\begin{equation}\label{gdgdgd}
u=0,\quad w^2=p^2=s^2.
\end{equation}

We leave it as a simple exercise to conclude the following

\begin{sublemma} $c_2=c_3=c_4=0$. Moreover,  either 
$$
b_i=\begin{pmatrix}0&0&0&0&w_i\\0&0&0&0&z_i\end{pmatrix},\quad d_i=\begin{pmatrix}0&0\\0&0\\0&0\\y_i&-x_i\\0&0\end{pmatrix},
$$ 
where $w_i=\frac{\sqrt{(1-t^2)}}{s}x_i$ and, moreover, $z_i=\frac{\sqrt{(1-t^2)}}{s}y_i$ if $x_i\neq0$, for all $2\leq i\leq 4$, or
$$
b_i=\begin{pmatrix}0&0&0&0&0\\t_{i1}&t_{i2}&t_{i3}&t_{i4}&t_{i5}\end{pmatrix},\quad d_i=\begin{pmatrix}u_{i1}&0\\u_{i2}&0\\u_{i3}&0\\u_{i4}&0\\u_{i5}&0\end{pmatrix}
$$
for all\; $2\leq i\leq 4.$
\end{sublemma}

\begin{proof} (sketch) We know $c_2=0$ by~\eqref{cfd}. By the third identity of~\eqref{many},~\eqref{gd},~\eqref{gdgd}, and~\eqref{gdgdgd}, we obtain
$$
b_2=\begin{pmatrix}0&0\\0&\sqrt{(1-t^2)/2)}\end{pmatrix},\quad d_2=\begin{pmatrix}0&0\\p&0\\0&0\end{pmatrix}, \quad t\neq 1\;\,\text{as}\;\,s\neq 0,
$$
with an appropriate column operation on $b_2$ (note that $b_2$ is of size 2-by-5 and $d_2$ of size 6-by-2). The sublemma follows by the fact that any linear combination of
$B_1,\cdots,B_4$ is of rank $\leq 2$ and so all its $3$-by-$3$ minors are zero while invoking~\eqref{cskew}.
\end{proof}

To finish the proof of the lemma, we shall find the intersection of the kernel spaces of two neighboring $S_c$ and $S_{c'}$  given in~\eqref{sc} for generic choices of $c$ and $c'$. Let $(x,y,z)^{tr}, x,y\in{\mathbb R}^8,z\in{\mathbb R}^7,$ be in the kernel space of $S_c$, which amounts to the following 
\begin{equation}\label{AbC}
c_0x+A_cy+B_cz=0,\quad A_c^{tr}x-c_0y+C_cz=0,\quad B_c^{tr}x+C_c^{tr}y=0.
\end{equation}
Since the choice of $c$ is generic, $B_c$ is of rank 2, so that we can change frame in which~\eqref{BC} and~\eqref{A} hold for $B_c$ with
$$
B_c=C_c=\begin{pmatrix}0&0\\0&\sigma_c\end{pmatrix}, \quad A_c=\begin{pmatrix}I&0\\0&\Delta_c\end{pmatrix}.
$$
The  point is that then the third identity of~\eqref{AbC} implies that if we decompose $x,y,z$, relative to the new frame, into
$$
x=(X_1,X_2),\quad y=(Y_1,Y_2),\quad z=(Z_1,Z_2), \quad X_2,Y_2,Z_2\in{\mathbb R}^2,
$$
then $X_2=-Y_2$ in the space $V_c$ perpendicular to the kernel of $B_c^{tr} $ ($V_c$ is the image of $B_c$). Meanwhile, the first and second identity result in 
$$
Z_2=-c_0X_2+\Delta_cX_2, \quad X_1=Y_1=0,
$$
so that the kernel of $S_c$ is parametrized by $Z_1$ in the kernel of $B_c$ and $X_2$ in the image of $B_c$ (=$V_c$), which is 7-dimensional.

In both cases of the above sublemma, the two generic $c$ and $c'$ introduce a 1-dimensional reduction to the the 5-dimensional kernel of $B_c$, whereas the image of $B_c$ ($=V_c$) retains a common space for the kernels of $S_c$ and $S_{c'}$. In fact, in the former case, 
$$
\text{kernel}\,(B_c^{tr})={\mathbb R}^5\oplus L_c\subset{\mathbb R}^5\oplus{\mathbb  R}^3,
$$
where $L_c$ is a line.  Therefore, $V_c$ is a 2-plane perpendicular to $L_c$ in ${\mathbb R}^3$, so that $V_c$ and $V_{c'}$ intersect in a line in ${\mathbb R}^3$. In the latter case, $V_c\cap V_{c'}$ is the last ($8^{th}$) coordinate line of $x$. 

In any event, the kernels of $S_c$ and $S_{c'}$ intersect in a space of dimension 5, 4 dimensions from the intersection of the kernels of $B_c$ and $B_{c'}$ and 1 dimension from $V_c\cap V_{c'}$. Thus, generically $r_\lambda=7-5=2$.

\end{proof}


\begin{lemma}\label{3null} Let $(m_{+},m_{-})=(7,8)$. Away from points of Condition A on $M_{+}$, let $n_0,\cdots,n_7$ be 
a normal basis such that the frame $(n_0,n_1)$ is normalized with the given spectral data $(\sigma,\Delta)$ as in~\eqref{BC} and~\eqref{A}. Assume $\sigma$ is 
of size $3$-by-$3$ and the generic rank of linear combinations of $B_1,\cdots,
B_7$ is $\geq 5$.
\begin{description} 
\item[(1)] If $\sigma\neq I/\sqrt{2}$, then the normal basis cannot be $3$-null.
\item[(2)] Suppose $n_2,n_3,n_4$ are $3$-null, and moreover, suppose the spectral data of all linear combinations of $B_1$ through $B_4$ are $(\sigma,\Delta)=(I/\sqrt{2},0)$. Then $b_2=b_3=b_4=0$ if all linear combinations of $B_1$ through $B_4$ are of rank $\leq 3$. In particular, under the same condition,  
$r_\lambda\leq 3$ for any $\lambda$ in the $3$-quadric of oriented $2$-planes of ${\mathbb R}^5$ linearly spanned by $n_0,\cdots n_4$. 
\end{description} 
\end{lemma}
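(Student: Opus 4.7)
The plan is to parallel the strategy of Lemma~\ref{2null}, mining the Ozeki--Takeuchi identities~\eqref{conv} under the $3$-null hypothesis. By Lemma~\ref{rnull}, for $l\geq 2$, write
$$
A_l=\begin{pmatrix}\alpha_l&\beta_l\\\gamma_l&\delta_l\end{pmatrix},\quad B_l=\begin{pmatrix}0&d_l\\b_l&c_l\end{pmatrix},\quad C_l=\begin{pmatrix}0&g_l\\e_l&f_l\end{pmatrix},
$$
with $\delta_l,c_l,f_l$ of size $3\times 3$; this zero-block form encodes $3$-nullity. Applying identities $3,4,6$ of~\eqref{conv} with $i=l,j=1$ yields the analogues of~\eqref{b},~\eqref{d},~\eqref{e}, while identities $1,2,5,8$ with $i=l,j=1$ and $i=j=l$ produce further relations pinning down $\alpha_l,\delta_l$ and constraining $b_l,c_l,d_l,f_l,g_l$.

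For part (1), $\sigma=\mathrm{diag}(1/\sqrt 2,s,s)$ with $s\neq 1/\sqrt 2$, so $\Delta$ carries one nontrivial $2\times 2$ $\Theta$-block. The asymmetry between the first coordinate (trivial piece: $\Delta_1=0$, $\sigma_{11}=1/\sqrt 2$) and the two remaining coordinates (carrying the $\Theta$-block) is crucial: the analogue of~\eqref{del} imposes coordinate-specific coupling on $\delta_l$ and on the skew-symmetric correction $c_l-f_l$, splitting each $3\times 3$ block into a trivial-piece contribution and a $\Theta$-piece contribution. A case analysis on the ranks of $d_l$, of the skew-symmetric $\alpha_l$, and of $c_l-f_l$, in the spirit of the sublemma within Lemma~\ref{2null}, combined with the Clifford $C_k$-divisibility constraint on $\mathbb{R}^{8-3}=\mathbb{R}^5$ (neither $\dim C_1=2$ nor $\dim C_2=4$ divides $5$), should show that the generic linear combination $\sum_{l\geq 2}c_l b_l$ has rank at most $1$. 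This caps the generic rank of
$$
c_1B_1+\sum_{l\geq 2}c_lB_l=\begin{pmatrix}0&\sum c_l d_l\\\sum c_l b_l&c_1\sigma+\sum c_l c_l\end{pmatrix}
$$
at $1+3=4$, contradicting the hypothesis that the generic rank of combinations of $B_1,\ldots,B_7$ is at least $5$.

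For part (2), $(\sigma,\Delta)=(I/\sqrt 2,0)$ collapses the identities: $\beta_l=-\sqrt 2\,g_l$, $\gamma_l^{tr}=-\sqrt 2\,d_l$, $\delta_l+\delta_l^{tr}=0$, and $c_l+c_l^{tr}=f_l+f_l^{tr}$. The hypothesis that every linear combination of $B_1,\ldots,B_4$ has normalized spectral data $(I/\sqrt 2,0)$ forces each rank-$3$ member to have all three nonzero singular values equal to $1/\sqrt 2$. Combined with the overall rank $\leq 3$ bound, this traps the image contribution of the $b_l$ blocks inside the image of the remaining block structure for every combination. Arguing by contradiction, if some $b_{l_0}\neq 0$, one can tune coefficients to produce a rank-$3$ combination whose Gram matrix differs from $\tfrac{1}{2}I_3$, violating the spectral-data constraint. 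Once $b_2=b_3=b_4=0$, the concluding kernel-intersection argument from the proof of Lemma~\ref{2null}(2) applies verbatim to yield $r_\lambda\leq 3$ for $\lambda$ in the stated $3$-quadric.

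The principal obstacle lies in part (1): the a priori block-structural rank bound on $\sum c_lB_l$ is only $6$, which does not directly contradict the hypothesis of generic rank $\geq 5$. Extracting the extra drop from $6$ down to $4$ requires careful propagation of the nontriviality of $\Delta$ through all of the identities~\eqref{conv}, and the mixing between the trivial and $\Theta$ blocks of $(\sigma,\Delta)$ makes the resulting enumeration significantly more delicate than its counterpart in Lemma~\ref{2null}, even though the absence of low-dimensional Clifford modules on $\mathbb{R}^5$ ultimately provides the leverage to close each case.
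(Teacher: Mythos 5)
Your setup (the block decomposition coming from Lemma~\ref{rnull} and the first consequences of identities $3,4,6$ of~\eqref{conv}) agrees with the paper, but both halves of your argument stop exactly where the real work begins, so there is a genuine gap. In part (1), the pivotal claim that the generic combination $\sum_{l\geq 2}c_lb_l$ has rank at most $1$ is only asserted (``should show''), and it is not the mechanism that closes the case. Note first that the hypothesis of generic rank $\geq 5$ for combinations of $B_1,\dots,B_7$ already forces the generic $3$-by-$4$ block $\sum c_lb_l$ to have rank $\geq 2$ (a rank $\leq 1$ lower-left block caps the total rank at $1+3=4$); the paper uses precisely this, together with $b_j^{tr}(c_j-f_j)=0$ and the skew form~\eqref{ee4} of $c_j-f_j$, to conclude $c_j=f_j$ at the very start, and then $\delta_j$ skew, and after normalization $c_2=f_2=0$, $\delta_2=0$. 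The contradiction is reached only afterwards, by a spectral case analysis on the rank ($0$, $2$, or $4$) of the $5$-by-$5$ skew matrix $\alpha_2$, in which $d_2^{tr}d_2=g_2^{tr}g_2$, $\beta_2^{tr}\beta_2=\gamma_2\gamma_2^{tr}$ (see~\eqref{BG}), the eighth-identity relation~\eqref{dggd}, the expressions~\eqref{bega} for $\beta_2,\gamma_2$ in terms of $d_2,g_2$, and explicit eigenvalue bookkeeping with $t=\sqrt{1-2s^2}\neq 0$ each eliminate one configuration. The odd-dimensionality/Clifford leverage on ${\mathbb R}^5$ that you invoke only disposes of the degenerate sub-case $d_2=g_2=0$ (where $\beta_2=\gamma_2=0$ and a $5$-by-$5$ skew matrix would have to be orthogonal); it says nothing about the rank $2$ and rank $4$ configurations of $\alpha_2$ with $d_2\neq 0$, which is where all the delicacy lies. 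Nothing in your sketch supplies those computations.

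Part (2) has the same problem: the step ``tune coefficients to produce a rank-$3$ combination whose Gram matrix differs from $\tfrac12 I_3$'' is unsubstantiated, and it is not how the spectral constraint is actually exploited. The paper extracts from the rank $\leq 3$ hypothesis the infinite family of relations $d_2(c_2)^kb_2=0$ and $g_2(f_2)^kb_2=0$ (see~\eqref{d2b2},~\eqref{g2b2}), obtained by expanding $d_2(\cos\theta\,I/\sqrt2+\sin\theta\,c_2)^{-1}b_2=0$ in a geometric series, and then stratifies by the rank of $d_2$: rank $3$ gives $b_2=b_3=b_4=0$ directly; rank $2$ with $c_2\neq 0$ is eliminated entry by entry; rank $2$ with $c_2=f_2=\delta_2=0$ and rank $1$ are reduced to contradictions with the spectral data $(\sigma,\Delta)=(I/\sqrt2,0)$ via~\eqref{ggddbb},~\eqref{alga},~\eqref{agd} and the analysis recycled from part (1); $d_2=g_2=0$ is absurd for the $5$-by-$5$ skew $\alpha_2$. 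None of this is recoverable from the single observation that a rank-$3$ member has all singular values equal to $1/\sqrt2$. The one step of yours that does transfer is the last: once $b_2=b_3=b_4=0$ is known, the kernel-intersection argument from Lemma~\ref{2null}(2) indeed yields $r_\lambda\leq 3$ on the stated $3$-quadric; but that conclusion is conditional on the unproven vanishing of the $b$-blocks, so as it stands the proposal is a plan rather than a proof.
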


\begin{proof} First note that the 3-by-3 matrices
$\sigma$ in $B_1=C_1$ and $\Delta$ in $A_1$ are now
$$
\sigma:=\begin{pmatrix}1/\sqrt{2}&0\\0&s Id\end{pmatrix},\;s\neq 0;\quad
\Delta:=\begin{pmatrix}0&0\\0&t J\end{pmatrix},\quad J:=\begin{pmatrix}0&1\\-1&0
\end{pmatrix},\;t=\sqrt{1-2s^2},
$$
with $A_j,B_j,C_j$ and the associated notation
given in~\eqref{e1},~\eqref{e2}. Since $\sigma$ is of size 3-by-3, the skew-symmetric $\Delta$ must have a zero eigenvalue, which accounts for the eigenvalue $1/\sqrt{2}$ for $\sigma$. 

We prove item (1). $t\neq 0$ in this case. Suppose the normal basis $n_0,n_1,n_2,\cdots,n_7$ is 3-null.

For $i=1,j\geq 2$, we see
\begin{equation}~\label{ee3}
b_j=e_j, \quad\sigma (c_j-f_j)=-(c_j-f_j)^{tr}\sigma, \quad 2\leq j\leq 7,
\end{equation}
by employing the sixth identity of~\eqref{conv}. 

The second identity of~\eqref{ee3} gives
\begin{equation}\label{ee4}
c_j-f_j=\begin{pmatrix}0&v\\-v^{tr}/s\sqrt{2}&w\end{pmatrix},
\end{equation}
where $w$ is 2-by-2 skew-symmetric. On the other hand, the six identity of~\eqref{conv}
for $i=j\geq 2$ results in
\begin{equation}\label{cf}
b_j^{tr}(c_j-f_j)=0,\quad d_j^{tr}d_j+c_j^{tr}c_j=g_j^{tr}g_j+f_j^{tr}f_j.
\end{equation}
Since the $b$-matrices are of rank at least 2 generically for the generic $B_n$
matrices to be of rank $\geq 5$, we see
from~\eqref{ee4}
and~\eqref{cf} that the $c-f$ matrices are zero generically and hence are zero identically,
so that now
\begin{equation}\label{c=f}
c_j=f_j, \quad 2\leq j\leq 7;
\end{equation}
it follows from the fifth identity for $i=1,j\geq 2$, giving
$$
\sigma\delta_j\sigma-\sigma\Delta c_j+\sigma\Delta f_j\;\;\text{is skew-symmetric},
$$
that, by~\eqref{c=f},
\begin{equation}\label{deltaj}
\delta_j\;\;\text{is skew-symmetric},\;\; 2\leq j\leq 7.
\end{equation}
Meanwhile, the seventh identity of~\eqref{conv} asserts
$$
(-\Delta^2+\sigma^2)c_j+2c_j\sigma^2+\sigma c_j^{tr}\sigma+\sigma^2 c_j+\sigma c_j^{tr}\sigma=c_j,
$$
which comes down to
$$
c_j\sigma=-\sigma c_j^{tr},\quad 2\leq j\leq 7,
$$
which gives that $c_j$ is of the form
$$
c_j=\begin{pmatrix}0&c_{j1}\\-c_{j1}^{tr}/s\sqrt{2}&c_{j2}\end{pmatrix},
\quad c_{j2}=-c_{j2 }^{tr},\quad 2\leq j\leq 7,
$$
where $c_{j2}$ is 2-by-2.

Since the matrix form of $c_j,2\leq j\leq 7,$ imposes three linear constraints, we may
thus assume without loss
of generality that                             
\begin{equation}\label{cj=0}
c_2=f_2=0.
\end{equation}
The second
identity of~\eqref{conv} for $i=2, j=1$ then derives
$$
\delta_2^{tr}\Delta-\Delta\delta_2=-2(f_2+f_2^{tr})\sigma=0,
$$
from which there follows, on account of~\eqref{deltaj} and $t\neq 0$, 
\begin{equation}\label{deltaj=0}
\delta_2=0;
\end{equation}
in particular,~\eqref{many} holds true again.
 
 Now the second identity of~\eqref{cf} and~\eqref{c=f} imply
\begin{equation}\label{dd=gg}
d_2^{tr}d_2=g_2^{tr}g_2,
\end{equation}
and moreover the eighth identity of~\eqref{conv} gives
$$
b_2^{tr}(\beta_2^{tr}d_2+\gamma_2g_2)=0,
$$
which, when incorporated with~\eqref{dd=gg} and~\eqref{bega}, arrives at
\begin{equation}\label{dggd}
b_2^{tr}\sigma^{-1}(d_2^{tr}g_2+g_2^{tr}d_2)=0.
\end{equation}


Now, since the 5-by-5 $\alpha_2$ is skew-symmetric, its rank is either
0, 2, or 4, so that $\beta_2$ and $\gamma_2$ being in the kernel of
$\alpha_2$ imply that we can assume

$$
\alpha_2=\begin{pmatrix}\alpha&0\\0&0\end{pmatrix},
\quad\beta_2=\begin{pmatrix}0\\\beta\end{pmatrix},
\quad\gamma_2=\begin{pmatrix}0&\gamma\end{pmatrix},
$$
where $\alpha$ is of rank 0, 2, or 4 of the same square size, $\beta$ is of size 5-by-3, 3-by-3,
or 1-by-3, and $\gamma$ is of size 3-by-5, 3-by-3, or 3-by-1, respectively.

We first rule out the case when $\alpha=0$. Assume $\alpha=0$.
Now, since
$$
B_2B_2^{tr}=\begin{pmatrix}b_2b_2^{tr}&0\\0&d_2d_2^{tr}\end{pmatrix}
$$
is of rank at most 6 (because both $b_2$ and $d_2$ are of rank at most 3) , we see
$$
A_2A_2^{tr}=\begin{pmatrix}\beta_2\beta_2^{tr}&0\\0&\gamma_2\gamma_2^{tr}\end{pmatrix}
$$
has eigenvalue 1
counted at least twice, which implies by the third and fourth identities of~\eqref{many} that
$b_2b_2^{tr}$ has eigenvalue 0 counted at least once so that, in particular,
$b_2b_2^{tr}$ is of rank at most 2 and hence $B_2B_2^{tr}$ is of rank at most 5 and
so in fact $A_2A_2^{tr}$ has eigenvalue 1 counted at least three times.
Thus, either $\beta_2\beta_2^{tr}$ or $\gamma_2\gamma_2^{tr}$ has eigenvalue 1
counted at least twice, so that $b_2b_2^{tr}$ has eigenvalue 0 counted at least twice
and so $b_2$ is of rank at most 1 and $B_2^{tr}B_2$ is of rank at most 4. This forces
$A_2A_2^{tr}$ to have eigenvalue 1 counted at least four times; we conclude,
by 
\begin{equation}\label{BG}
\gamma_2\gamma_2^{tr}=\beta_2^{tr}\beta_2,
\end{equation}
 a consequence of the third and fourth identities of~\eqref{many},
that each of $\beta_2^{tr}\beta_2$ and
$\gamma_2\gamma_2^{tr}$ has some eigenvalue $1-2\epsilon^2,\epsilon\leq 1/2,$ counted once
and eigenvalue 1 counted twice, whereas $d_2^{tr}d_2$ (and $g_2^{tr}g_2$)
has the eigenvalue $\epsilon^2$
counted once and eigenvalue $1/2$ counted twice and $b_2b_2^{tr}$ is of rank at most 1 with
eigenvalue $\epsilon^2$ counted once and eigenvalue 0 counted twice.

By performing a row operation without changing $A_1,B_1,C_1$, we may assume the 5-by-3 $d_2$ is of the form
\begin{equation}\label{D2}
d_2=\begin{pmatrix}d\\0\end{pmatrix},\quad d=\begin{pmatrix}p&y&z\\0&q&w\\0&0&r\end{pmatrix}.
\end{equation}

Write the 5-by-3 $\beta_2$ as
$$\beta_2=\begin{pmatrix}\theta\\\mu\end{pmatrix},
$$
where  $\theta$ is of size 3-by-3. The first identity of~\eqref{many}, with $\alpha_2=0$, gives 
\begin{equation}\label{tm}
\theta\mu^{tr}=0,\quad\mu\mu^{tr}=I.
\end{equation}
Meanwhile, by~\eqref{bega}
\begin{equation}\label{DG}
g_2=\begin{pmatrix}d\Delta-\theta\sigma\\-\mu\sigma\end{pmatrix},\quad \gamma_2^{tr}=\begin{pmatrix}-d\sigma^{-1}(I-\Delta^{tr}\Delta)+\theta\Delta\\\mu\Delta\end{pmatrix}.
\end{equation}

\noindent {\bf Case 1.} $p\neq 0$. With~\eqref{tm}, the vanishing of the off-block of the second identity of~\eqref{many}
calculates to yield
\begin{equation}\label{0d}
\aligned
0&=
(-d\sigma^{-1}(I-\Delta^{tr}\Delta)+\theta\Delta)\Delta^{tr}\mu^{tr}-2(d\Delta-\theta\sigma)\sigma\mu^{tr}\\
&=d(-\sigma^{-1}(I-\Delta^{tr}\Delta)\Delta^{tr}-2\Delta\sigma)u^{tr}+\theta(\Delta\Delta^{tr}+2\sigma^2)u^{tr}\\
&=d\; \text{diag}(\sqrt{2},0,0)\mu^{tr}+\theta\mu^{tr}\\
&=\text{diag}(\sqrt{2}p,0,0)\mu^{tr},
\endaligned
\end{equation}
where we invoke $t^2+2s^2=1$ and the first identity of~\eqref{tm}, and\, $\text{diag}(a,b,c)$ denotes a diagonal matrix whose diagonal entries are $a,b,c$. 
We conclude 
\begin{equation}\label{MU}
\mu=\begin{pmatrix}0&A\end{pmatrix},\quad AA^{tr}=I, \quad\theta=\begin{pmatrix}\tau&0\end{pmatrix}
\end{equation}
where $A$ is of size 2-by-2 and $\tau$ is of size 3-by-1, when we invoke the second identity of~\eqref{tm}. In particular,
$$
\theta\Delta=0,
$$
 from which $\gamma_2$ is simplified to facilitate the calculation of 
\begin{equation}\label{extra2}
\aligned
\gamma_2\gamma_2^{tr}
&=
(-d\sigma^{-1}(I-\Delta^{tr}\Delta)+\theta\Delta)^{tr}
(-d\sigma^{-1}(I-\Delta^{tr}\Delta)+\theta\Delta)\\&+(\mu\Delta)^{tr}(\mu\Delta)
\endaligned
\end{equation}
to derive, by~\eqref{BG}, 
$$
(I+\Delta^2)^{tr}\sigma^{-1}d^{tr}d\sigma^{-1}(I+\Delta^2)+\Delta^{tr}\mu^{tr}\mu\Delta
=\theta^{tr}\theta+\mu^{tr}\mu=\text{diag}(|\tau|^2,1,1)
$$
whose right hand side gives the eigenvalues of $\beta_2^{tr}\beta_2$, which, as we mentioned above, are 1 counted twice and $|\tau|^2=1-2\epsilon^2$; when we invoke~\eqref{MU}, the equality simplifies to
\begin{equation}\label{DTR}
d^{tr}d=\begin{pmatrix}(1-2\epsilon^2)/2&0&0\\0&1/2&0\\0&0&1/2\end{pmatrix}.
\end{equation}
Therefore, since the eigenvalues of $d_2^{tr}d_2$ are $\epsilon^2$ counted once and $1 /2$ counted twice, it implies
$$
\epsilon^2=(1-2\epsilon^2)/2,\quad \text{so}\;\;\epsilon^2=1/4.
$$
On the other hand, $d_2^{tr}d_2$ can be calculated by~\eqref{D2} to compare with~\eqref{DTR} to obtain
$$
y=z=w=0,\quad p^2=1/4,\; q^2=r^2=1/2.
$$
The fourth identity of~\eqref{many} now gives
$$
b_2b_2^{tr}=(I-\beta_2^{tr}\beta_2)/2=\begin{pmatrix}\epsilon^2&0&0\\0&0&0\\0&0&0\end{pmatrix},\quad \epsilon^2=1/4,
$$
while in~\eqref{dggd}
$$
d_2^{tr}g_2+g_2^{tr}d_2=\begin{pmatrix}-\sqrt{2}p\tau_1&-q\tau_2/\sqrt{2}&-r\tau_3/\sqrt{2}\\-q\tau_2/\sqrt{2}&0&(q^2-r^2)t\\-r\tau_3/\sqrt{2}&(q^2-r^2)t&0\end{pmatrix},\quad \tau=(\tau_1.\tau_2,\tau_3)^{tr}.
$$
In particular,~\eqref{dggd} forces $\tau=0$, which is a contradiction as $|\tau|^2=1-2\epsilon^2=1/2$. 

\noindent {\bf Case 2.} $p=0$. We follow essentially the same reasoning as above, except now 
$$
p=q=r=0
$$
since $d$ is of rank $\leq 2$.
Now substitute the triangular form of $d_2$ into~\eqref{extra2} to observe that it is a matrix whose first row and first column are zero, so that when we look at the $(1,1)$-emtry
of the right hand side of~\eqref{BG}, we see that 
\begin{equation}\label{extra3}
\theta=\begin{pmatrix}0&\tau\end{pmatrix},\quad\mu=\begin{pmatrix}0&A\end{pmatrix},\quad AA^{tr}=I,
\end{equation}
where $\tau$ is of size 3-by-2 and $A$ is of size 2-by-2. But then $\theta\mu^{tr}=0$ implies $\tau=0$, i.e., $\theta=0$ now. 
Once more, $\theta\Delta=0$ so that the same analysis as above goes through to achieve
$$
\beta_2^{tr}\beta_2=\text{diag}(0,1,1),\quad d^{tr}d=\text{diag}(0,1/2,1/2).
$$
This is a contradiction, since it says that $1-2\epsilon^2=\epsilon^2=0$.

\vspace{2mm}

Having disposed of the case $\alpha=0 $,
suppose next that  $\alpha$ is of rank 2, so that $\beta$ and $\gamma$
are both of size 3-by-3; by~\eqref{bega} $d_2$ and $g_2$ are of the same form as $\beta_2$
and $\gamma_2$, respectively. Write
\begin{equation}\label{x1x2x3}
d_2=\begin{pmatrix}0\\X\end{pmatrix},
\quad g_2=\begin{pmatrix}0\\Y\end{pmatrix},
\end{equation}
where $X$ and $Y$ are made up of 3-by-1 column vectors $X_1,X_2,X_3$ and
$Y_1,Y_2,Y_3$, respectively.

If $d_2$ is of rank 3. Then~\eqref{dd=gg} implies that there is a 3-by-3 orthogonal
matrix $T$ such that
$$
TX_i=Y_i,\quad 1\leq i\leq 3.
$$

If $b_2$ is of rank 3,~\eqref{dggd} gives
$$
X_i\cdot TX_j=-X_j\cdot TX_i, \quad 1\leq i,j\leq 3,
$$
where $\cdot$ denotes the standard inner product. Consequently, T is skew-symmetric and orthogonal. This is impossible as
$\det(T)=0$ now.

If $b_2$ is of rank $\leq 2$, then $b_2b_2^{tr}$ has an eigenvalue 0, so that
by the fourth identity of~\eqref{many} $\beta_2^{tr}\beta_2$
has an eigenvalue 1. By the first identity of~\eqref{many}, this forces
$XX^{tr}$,
to have an eigenvalue 0, so that $d_2$ is not of rank 3, a contradiction.

Now that $d_2$ being of rank 3 is excluded, we assume the rank of $d_2$ is $\leq 2$. Note that since the lower right 2-by-2 block of $\sigma$ is a multiple of the identity matrix, 
we can perform column operations between the last two columns of $X$ without changing $A_1,B_1$ and $C_1$, though  
we cannot perform column operations to interchange the first and the remaining 
two columns if we want to retain the values of $A_1,B_1$ and $C_1$, for reason that $s\neq 1/\sqrt{2}$. 

By performing a row operation without changing $A_1,B_1,C_1$, we may assume the 3-by-3 $X$ 
takes the form
\begin{equation}\nonumber
X=\begin{pmatrix}d\\0\end{pmatrix},\quad d=\begin{pmatrix}x&y&z\\0&w&u\end{pmatrix},
\end{equation}
where $X$ and $Y$ are given in~\eqref{x1x2x3}.  For notational consistence, we set
$$
\beta_2=\begin{pmatrix}0\\\beta\end{pmatrix},\quad \beta=\begin{pmatrix}\theta\\\mu\end{pmatrix},\quad \gamma_2=\begin{pmatrix}0\\\gamma\end{pmatrix},\quad  Y=g,
$$
where $\beta, Y$ are of size 3-by-3 and $\theta$ is of size 2-by-3. 

As in the previous case when the rank of $\alpha$ is 0, we have two cases to consider, where when $x=0$ we may perform row and column operations to assume $y\neq 0$ and $w=0$.

When $x\neq 0$,  by~\eqref{tm} and~\eqref{0d}, we derive that the first coordinate of the the unit vector $\mu$ is zero. Therefore, by performing a column operation
between the last two columns we may assume
\begin{equation}\label{extra4}
\mu=(0,0,1), \quad \theta=\begin{pmatrix}p&q&0\\r&l&0\end{pmatrix}.
\end{equation}

When $x=w=0$,~\eqref{extra4} remains true with $p=r=0$.

 With these remarks out of the way,~\eqref{DG} gives
$$
g=\begin{pmatrix}-p/\sqrt{2}& -zt-sq&yt\\-r/\sqrt{2}&-ut-sl&wt\\0&0&-s\end{pmatrix},\quad  \gamma^{tr}=\begin{pmatrix} -\sqrt{2}x&-2sy&-2sz+tq\\0&-2sw&-2su+tl\\0&-t&0\end{pmatrix}.
$$
We calculate to see 
$$
\aligned
&\beta^{tr}\beta=\begin{pmatrix} p^2+r^2&pq+rl&0\\pq+rl&q^2+l^2&0\\0&0&1\end{pmatrix}\\
&\gamma\gamma^{tr}=\\
&\begin{pmatrix}2x^2&2\sqrt{2}sxy&-\sqrt{2}x(-2sz+tq)\\2\sqrt{2}sxy&4s^2y^2+4s^2w^2+t^2&-2sy(-2sz+tq)-2sw(-2su+tl)\\
-\sqrt{2}x(-2sz+tq)&-2sy(-2sz+tq)-2sw(-2su+tl)&(-2sz+tq)^2+(-2su+tl)^2\end{pmatrix}.
\endaligned
$$

By~\eqref{BG}, if  $x\neq 0$,
$$
-2sz+tq=0,\quad -2sw(-2su+tl)=0,\quad (-2su+tl)^2=1,\quad \text{so}\;\; w=0;
$$
on the other hand, if $x=w=0$ and $y\neq 0$, we obtain 
$$
-2sy(-2sz+tq)=0,\quad (-2su+tl)^2=1,\quad\text{so}\;\; -2sz+tq=0.
$$
In any event,
\begin{equation}\label{extra5}
-2sz+tq=0,\quad (-2su+tl)^2=1,\quad w=0.
\end{equation}

With these refined data, we observe that the $(2,2)$ entry of $\gamma^{tr}\gamma$ is 1, and thus we can employ the second identity of~\eqref{many} to conclude that the $(2,2)$-entry of $gg^{tr}$ is zero, i.e.,
\begin{equation}\label{extra6}
 r=0,\quad -tu-sl=0,
\end{equation}
In the case when $x=w=0$, we compare the $(2,3)$ entries of~\eqref{ddgg} to conclude
$$
yz=(-zt-sq)ty,\quad \text{so}\;\;z=-zt^2-stq 
$$
which, when incorporated with~\eqref{extra5}, arrives at
$$
z=-zt^2-stq=-zt^2-2s^2z=-(t^2+2s^2)z=-z,\quad\text{so}\;\; z=q=0.
$$ 
But then the $(2,2)$ entry of~\eqref{ddgg} gives
$$
y^2=(zt+sq)^2=0,
$$
a contradiction.

Therefore, $x\neq 0$ is the only possibility, where $w=r=0$ as verified above. We now have the simplified data
\begin{equation}\label{refined}
\aligned
d&=\begin{pmatrix}x&y&z\\0&0&u\end{pmatrix},\quad &
g&=\begin{pmatrix}-p/\sqrt{2}& -zt-sq&yt\\0&0&0\\0&0&-s\end{pmatrix},\\
\beta&=\begin{pmatrix}p&q&0\\0&l&0\\0&0&1\end{pmatrix},\quad&
\gamma^{tr}&=\begin{pmatrix} -\sqrt{2}x&-2sy&0\\0&0&\pm 1\\0&-t&0\end{pmatrix}.
\endaligned
\end{equation}
Accordingly,~\eqref{BG}  simplifies to
$$
2x^2=p^2,\quad pq=2\sqrt{2}sxy,\quad q^2+l^2=4s^2y^2+t^2.
$$
Since $x\neq 0$, we incorporate~\eqref{extra5} and~\eqref{extra6} to solve these equations to obtain
\begin{equation}\label{extra7}
p=\pm\sqrt{2}x,\quad q=\pm 2sy, \quad z=\pm ty,\quad l^2=t^2,
\end{equation}
where the first three equalities share the same sign. We then employ the last equality of~\eqref{extra7} and the second equality of~\eqref{extra6} to see
$$
l={\pm t},\quad u={\mp s},
$$
which means that $l$ and $u$ must differ by a sign. However, since $l$ appears in the second column and $u$ appears in the third, we can certainly change the sign of the basis vector to change the sign of one of $l$ and $u$ 
without affecting the other, while keeping the values of $A_1,B_1$ and $C_1$, to arrange that $l$ and $u$ have the same sign. This is a contradiction.

Lastly, we disprove the case when $\alpha$ is of rank 4, where now
$\beta$ and $\gamma^{tr}$ are 1-by-3. It follows by~\eqref{bega} that $X$ and
$Y$ are 1-by-3. Write 
$$
X:=(a,b,c),\quad \beta:=(p,q,r),
$$
where $X$ is given in~\eqref{x1x2x3}. Then~\eqref{bega} gives, as above,

$$
Y=(-p/\sqrt{2},-tc-sq,tb-sr),\quad \gamma^{tr}=(-\sqrt{2}a,(t^2-1)b/s-tr,(t^2-1)c/s+tq).
$$
Meanwhile, $X^{tr}X=Y^{tr}Y$ and $\beta^{tr}\beta=\gamma\gamma^{tr}$
derive as above

\begin{eqnarray}\nonumber
\aligned
&a=\pm(-p/\sqrt{2}),\quad b=\pm(-tc-sq),\quad c=\pm(tb-sr),\\
&p=\pm(-\sqrt{2}a),\quad q=\pm((t^2-1)b/s-tr),\quad r=\pm((t^2-1)c/s+tq),
\endaligned
\end{eqnarray}
where the three equations in each of the two triples share the same sign. It follows that, by solving the linear system with the unknowns $a,b,c,p,q,r$, we obtain 
\begin{equation}\label{linear}
b=c=q=r=0,
\end{equation}
since $s\neq 1/\sqrt{2}$.
Then, by the third identity of~\eqref{many} we obtain

$$
2b_2b_2^{tr}=\begin{pmatrix}1-2a^2&0&0\\0&1&0\\0&0&1\end{pmatrix},
$$
so that we see from the $(1,1)$-entry of~\eqref{dggd} that

$$
(1-2a^2)a^2=0.
$$
If $a=0$, then $X=Y=0$, or rather $d_2=g_2=0$, so that by~\eqref{bega} $\beta_2=\gamma_2=0$,
which contradicts the first identity of~\eqref{many}. Hence, $2a^2=1$. But then
the first identity of~\eqref{many} results in

$$
1=p^2+q^2+r^2+2a^2+2b^2+2c^2=p^2+2a^2=p^2+1;
$$
we conclude that $p=0$, or rather $\beta=0$, so that $\gamma=0$ by $\beta^{tr}\beta=\gamma\gamma^{tr}$,
and so~\eqref{bega} gives $d_2=g_2=0$, a contradiction again. We are done with item (1).

\vspace{2mm}

To prove item (2), we assume that a generic linear combination of $B_1$ through $B_4$ is of rank 3. Then 
the linear combination
$$
B(\theta):=\cos(\theta) B_1 +\sin(\theta) B_2=\begin{pmatrix} 0&\sin(\theta) d_2\\\sin(\theta) b_2&\cos(\theta) I+\sin(\theta) c_2\end{pmatrix}
$$
is of rank 3 for a generic $\theta$, with $(B_1,C_1)$ and $(B_2,C_2)$ given in~\eqref{BC} and~\eqref{e1}, where now
$$
\sigma=I/\sqrt{2},\quad \Delta=0;
$$
in particular, the first, second, and fifth identities of~\eqref{conv} for $i=2,j=1$ assert
\begin{equation}\label{c2f2}
c_2=-(c_2)^{tr},\;\; f_2=-(f_2)^{tr},\;\;\delta_2=-(\delta_2)^{tr},\;\; \gamma_2^{tr}=-\sqrt{2}d_2,\;\; \beta_2=-\sqrt{2}g_2,
\end{equation}
The kernel of the 8-by-7 $B(\theta)$ is of dimension 4 for a generic $\theta$. Setting $(x,y)^{tr}$ for a kernel vector of $B(\theta)$, where $x$ is of size 1-by-4 and $y$ is of size 1-by-3, we solve to see 
$$
\sin(\theta) d_2 y=0,\quad \sin(\theta) b_2 x+(\cos(\theta)I/\sqrt{2}+\sin(\theta)c_2)y=0,
$$
from which we derive
$$
d_2(\cos(\theta)I/\sqrt{2}+\sin(\theta)c_2)^{-1}b_2x=0,\quad \forall x.
$$
It follows that
$$
0=d_2(\cos(\theta)I/\sqrt{2}+\sin(\theta)c_2)^{-1}b_2=\sum_{k=0}^\infty (-1)^k d_2 (c_2)^k b_2 x^k,\quad x=\sqrt{2}\tan(\theta)
$$
for a generic small $\theta$, which is equivalent to
\begin{equation}\label{d2b2}
d_2(c_2)^kb_2=0,\quad k=0,1,2,3,\cdots
\end{equation}
Likewise, by considering $C_2$ we obtain
\begin{equation}\label{g2b2}
g_2(f_2)^kb_2=0,\quad k=0,1,2,3,\cdots
\end{equation}

Let us first remove the case when $d_2$ is of rank $3$. Performing a row reduction on the matrix $B_2$, we can eliminate $c_2$ without changing $b_2$. It follows that $b_2=0$ because $B_2$ is of rank 3. But since a generic linear combination of $d_2,d_3,d_4$ is also of rank 3,
we see a generic linear combination, and hence all linear combinations of $b_2,b_3,b_4$ are zero. 

\vspace{2mm}

We may now assume that all linear combinations of $d_2,d_3,d_4$ (likewise, of $g_2,g_3,g_4$) are of rank at most 2.  
Assume the rank of $d_2$ is 2. 

If $c_2\neq 0$, performing row and column operations, without changing $B_1, C_1$, and $A_1$, we may assume 
$$
c_2=zJ,\quad J:=\begin{pmatrix}0&0&0\\0&0&1\\0&-1&0\end{pmatrix},\quad z\neq 0;
$$
this is possible because the spectral data $(\sigma,\Delta)=(I/\sqrt{2},0)$ now. 
We then perform a column operation on the last two columns without changing $A_1,B_1,C_1$ and $c_2$, so that
we may assume
\begin{equation}\label{eqn}
d_2=\begin{pmatrix}p&q&0\\0&r&u\\0&0&0\\0&0&0\\0&0&0\end{pmatrix},\quad
b_2=\begin{pmatrix}b_{11}&b_{12}&b_{13}&b_{14}\\b_{21}&b_{22}&b_{23}&b_{24}\\b_{31}&b_{32}&b_{33}&b_{34},
\end{pmatrix}.
\end{equation}
from which~\eqref{d2b2} for $k=0,1$ with $z\neq 0$ results in
$$
pb_{11}+qb_{21}=0,\quad rb_{21}+ub_{31}=0,\quad qb_{31}=0,\quad -ub_{21}+rb_{31}=0.
$$
Generically, we may always assume $p\neq 0$ (by performing row and column operations if necessary). We solve to see that 
$b_2=0$ by the fact that one of $r$ and $u$ is nonzero for $d_2$ to have rank 2. Since the choice of $n_2$ is generic, this says that $b_2=b_3=b_4=0$ if generic combinations of $c_2,c_3,c_4$ are not zero.
So now we may assume
$$
c_2=c_3=c_4=0,\quad\text{and likewise}\quad  f_2=f_3=f_4=0,
$$
and a generic combination of $b_2,b_3,b_4$ is nonzero, which we may assume is $b_2$, without loss of generality.

The rank of $g_2$ is also 2, because
the sixth identity of~\eqref{conv} for $i=j=2$ reads
\begin{equation}\label{d2only}
d_2^{tr}d_2=g_2^{tr}g_2,
\end{equation}
knowing that $c_2=f_2=0$.

Setting $k=0$ in~\eqref{d2b2} and~\eqref{g2b2}, we see that the column space of $b_2$ is identical with  the 1-dimensional kernel space of $d_2$ and of $g_2$. 
We may thus assume
$b_2$ is spanned by $(0,0,1)^{tr}$ and assume
\begin{equation}\label{22}
d_2=\begin{pmatrix}p&q&0\\0&r&0\\0&0&0\\0&0&0\\0&0&0\end{pmatrix},\quad  b_2=\begin{pmatrix}0&0&0&0\\0&0&0&0\\a&0&0&0\end{pmatrix},\quad \delta_2=\begin{pmatrix}0&x&y\\-x&0&w\\-y&-w&0\end{pmatrix}.
\end{equation} 
The first identity of~\eqref{conv} applied to $i=j=2$
gives
\begin{equation}\label{ggddbb}
\gamma_2\gamma_2^{tr}+\delta_2\delta_2^{tr}+2b_2b_2^{tr}=I;
\end{equation}
with the fourth identity of~\eqref{c2f2} one compares the $(1,3),(2,3),$ and $(3,3)$-entries to ensure 
$$
xy=xw=0,\quad 2a^2+y^2+w^2=1.
$$ 

If $x\neq 0$, then $y=w=0$ and $a^2=1/2$, from which we see the nonzero 2-by-2 block $d$ of $d_2$ 
satisfies 
$$
d^{tr}d=(1-x^2)I/2,\quad \text{so} \;\;q=0, \quad p^2=r^2=(1-x^2)/2,
$$ 
incorporating the fourth identity of~\eqref{c2f2} and~\eqref{ggddbb}. However, since the spectral data, 
which are $(\sigma,\Delta)=(I/\sqrt{2},0)$ by assumption, of $B_2$ are the nonzero eigenvalues of 
$d_2^{tr}d_2$ and $b_2b_2^{tr}$ in view of the fact that we can now derive
$$
B_2B_2^{tr}=\begin{pmatrix}d_2d_2^{tr}&0\\0&b_2b_2^{tr}\end{pmatrix},
$$
we therefore conclude that $(1-x^2)/2 =1/2$, i.e., $x=0$, a contradiction. So, $x=0$.


If either $y$ or $w$ is nonzero, we observe first that with $c_2=0$ the first identity of~\eqref{conv} for $i=j=2$ implies
\begin{equation}\label{alga}
\alpha_2\gamma_2^{tr}=-\beta_2\delta_2^{tr},
\end{equation}
which says, by reading the third columns on both sides while invoking the fourth and fifth identities of~\eqref{c2f2}, that
the first two columns of $g_2$ are linearly dependent with coefficients $y$ and $w$, whereas~\eqref{d2only} asserts that the third column of $g_2$ is zero. This forces $g_2$ to be of rank $\leq 1$, a contradiction.
Consequently, $x=y=w=0$ so that $\delta_2=0$.


Now that $c_2=f_2=\delta_2=0$, the same analysis in the proof of item (1) for the case when the ranks of $\alpha$ and $d_2$ are 2 lends its way verbatim to~\eqref{refined}, where $\beta$ and $\gamma$ are also of rank 2 in the case when $t=0$. But then 
$$
A_2A_2^{tr}=\begin{pmatrix}\alpha\alpha^{tr}&0&0\\0&\beta\beta^{tr}&0\\0&0&\gamma\gamma^{tr}\end{pmatrix}
$$
forces $A_2$ to have rank 6, so that the spectral data of $B_2$ cannot be $(\sigma,\Delta)=(I/\sqrt{2},0)$, which would result in the rank of $A_2$ being 5. This case does not occur. 

On the other hand, the same proof as in item (1) in the case when the rank of $\alpha$ is 4 gets us all the way through to the linear system above~\eqref{linear}, where our spectral data is now
$(\sigma,\Delta)=(I/\sqrt{2},0)$. It is easily checked that
\begin{equation}\label{reference}
\beta\beta^{tr}=\gamma^{tr}\gamma=1,\quad XX^{tr}=YY^{tr}=1.
\end{equation}
Now,
$A_2^{tr}A_2$ is of rank 6
with eigenvalue 1 counted six times, four times from $\alpha$ and once each from $\beta$ and $\gamma$,  and 0 counted twice, so that $B_2B_2^{tr}$ is of rank 2 with eigenvalue $1/2$ counted twice. This again contradicts our spectral data assumption $(\sigma,\Delta)=(I/\sqrt{2},0)$.
This case does not occur either.

\vspace{2mm}

Next, we assume generic linear combinations of $d_2,d_3,d_4$ is of rank 1 and $b_2\neq 0$. We know by a symmetric reasoning that $g_2$ has rank $\leq 1$. Assume $c_2\neq 0$. Notation as in~\eqref{eqn}, we remark that the setup in the preceding case is still valid with 
$$
r=u=0
$$
now. We manipulate essentially the same to yield that if $q\neq 0$, then $b_{31}=0$ and $pb_{11}+qb_{21}=0,$ so that $b_2$ is of rank 1 as $b_2\neq 0$. But then the matrix
$$
B_2=\begin{pmatrix}0&d_2\\b_2&c_2\end{pmatrix}
$$ 
will be of rank 4, where the last row of $c_2$ (that of $b_2$ is 0) annihilates $q$ and $r$ of $d_2$ in a row operation, This is a contradiction. Hence, $q=0$, from which it follows that $b_{1j}=0$, i.e., the first row of $b_2$ is zero. Observe now we have
$$
d_2c_2=0,\quad c_2=zJ,
$$
so that  we calculate
$$
B_2B_2^{tr}=\begin{pmatrix}d_2d_2^{tr}&d_2c_2^{tr}\\c_2d_2^{tr}&c_2c_2^{tr}+b_2b_2^{tr}\end{pmatrix}=
\begin{pmatrix}d_2d_2^{tr}&0\\0&c_2c_2^{tr}+b_2b_2^{tr}\end{pmatrix};
$$
therefore, the spectral data dictates that we have 
$$
c_2c_2^{tr}+b_2b_2^{tr}=\begin{pmatrix}0&0\\0&z^2I\end{pmatrix}+\begin{pmatrix}0&0\\0&bb^{tr}\end{pmatrix},\quad b_2:=\begin{pmatrix}0\\b\end{pmatrix},\quad p^2=1/2,
$$
where $I$ of size 2-by-2, and $b$ of size 2-by-3 satisfies
\begin{equation}\label{zb}
z^2I+bb^{tr}=I/2.
\end{equation}
Hence, the identity
$$
\gamma_2\gamma_2^{tr}+\delta_2\delta_2^{tr}+2(b_2b_2^{tr}+c_2c_2^{tr})=I,
$$
obtained by the first identity of~\eqref{conv} for $i=j=2$, 
translates into
$$
\gamma_2\gamma_2^{tr}+\delta_2\delta_2^{tr}=\begin{pmatrix}1&0&0\\0&0&0\\0&0&0\end{pmatrix}.
$$
As a consequence, $\delta_2\delta_2^{tr}=0$
because $p^2=1/2$ and $\gamma_2=-\sqrt{2}d_2$. That is, 
$$
\delta_2=0.
$$ 
With this the first identity of~\eqref{conv} now gives
$$
\alpha_2\gamma_2^{tr}=-d_2c_2^{tr}=0,
$$
which implies that the first column (and hence the first row) of $\alpha_2=0$. Incorporating this into $p^2=1/2$ and
\begin{equation}\label{agd}
\alpha_2\alpha_2^{tr}+2g_2g_2^{tr}+2d_2d_2^{tr}=I
\end{equation}
obtained by the first identity of~\eqref{conv}, we conclude that the first column and row of $g_2g_2^{tr}$ are zero. 
That is, the first row of $g_2$ is zero; moreover, comparing the $(1.1)$-entries and knowing $p^2=1$, we see that the first column and row of $\alpha_2$ are zero since it is skew-symmetric.
Thus we can perform column and row operations, respecting $A_1,B_1,C_1,d_2$ and $c_2$, such that
$$
g_2=\begin{pmatrix}0&0&0\\\theta&\epsilon&0\\0&0&0\\0&0&0\\0&0&0\end{pmatrix}.
$$
Now, since
\begin{equation}\label{2g2}
2g_2^{tr}g_2+2b_2b_2^{tr}+2f_2f_2^{tr}=I
\end{equation}
obtained by the second identity of~\eqref{conv} for $i=j=2$ with $\delta_2=0$, we find that the $(1,3)$- and $(2,3)$-entries of $f_2f_2^{tr}$ are zero. That is,
$$
eg=eh=0,\quad f_2:=\begin{pmatrix}0&e&l\\-e&0&h\\-l&-h&0\end{pmatrix}.
$$
If $e\neq 0$, then $l=h=0$, so that inserting the first equality of~\eqref{zb} into~\eqref{2g2} to compare the $(3,3)$-entries we obtain $z=0$, a contradiction to $c_2\neq 0$. Thus $e=0$. We derive, by the second identity of~\eqref{conv} for $i=j=2$,
$$
\alpha_2^{tr} g_2=-\sqrt{2}g_2f_2^{tr},
$$
where the $(2,3)$-entry of the right hand side is a linear combination of the $(2,1)$- and $(2,2)$-entries of $g_2$ with coefficients $l$ and $h$ and all other entries are zero, whereas the $(2,3)$-entry of the the left hand side is zero. It follows that
$$
g_2f_2^{tr}=0=\alpha_2^{tr}g_2,
$$    
from which we conclude that the second, in addition to the first, column and  row of $\alpha_2$ are zero.
Thus, the second identity of~\eqref{conv} derives
$$
\alpha\alpha^{tr}=I,\quad \alpha_2=\begin{pmatrix}0&0\\0&\alpha\end{pmatrix},
$$
because both $d_2d_2^{tr}$ and $g_2g_2^{tr}$ are nontrivial only at the upper left 2-by-2 block, where $\alpha$ is of size 3-by-3 and skew-symmetric, which is absurd. As a result, $c_2=0$. 

Now that $c_2=0$, we employ the sixth identity of~\eqref{conv}, which gives
$$
d_2^{tr}d_2=g_2^{tr}g_2+f_2^{tr}f_2,
$$
to observe that $g_2$ cannot be of rank 0, or else the left hand side is of rank 1 whereas the right hand side is of rank either 0 or 2. That is, $g_2$ must be of rank 1 as well, so that
exactly the same parallel argument, replacing $d_2$ by $g_2$, establishes $f_2=0$. With now $c_2=f_2=\delta_2=0$, the  same arguments in the paragraph containing~\eqref{reference} results in a contradiction. 
This case does not occur.

Lastly, it is impossible that both $d_2=g_2=0$; for otherwise $\beta_2=\gamma_2=0$.  The first identity of~\eqref{conv} then asserts that the 5-by-5 skew-symmetric $\alpha_2$ satisfies $\alpha_2\alpha_2^{tr}=I$, which is absurd.

\end{proof}

\section{$M_{+}$ is generically $4$-null}\label{4n}

\begin{lemma}\label{la} Let $(m_{+},m_{-})=(7,8)$. Away from points of Condition A on $M_{+}$,
suppose
$$
\sup_{\lambda\in{\mathcal Q}_6} r_{\lambda}\geq 5.
$$
Then there is a choice of
$p_0,\cdots,p_5$ for
the codimension $2$ estimate~\eqref{est}
to go through. In particular, $V_0,\cdots,V_5$ are irreducible and
$p_0,\cdots,p_6$ form a regular sequence.
\end{lemma}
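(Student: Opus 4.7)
The plan is to choose $p_0,\ldots,p_5$ so that the 2-frame $(n_0,n_1)$ realizes the supremum $r:=\sup_{\lambda\in\mathcal{Q}_6}r_\lambda\geq 5$, and then to verify the codimension 2 inequality $\dim(J_k)\leq\dim(V_k)-2$ for each $k=0,\ldots,5$ by stratifying $\mathcal{Q}_{k-1}$ by $\mathcal{L}_j=\{r_\lambda=j\}$ and applying~\eqref{est} stratum by stratum, invoking the refinement of Lemma~\ref{sharper} whenever available. Concretely, I would fix a normal basis $n_0,\ldots,n_7$ with $(n_0,n_1)$ normalized as in~\eqref{BC},~\eqref{A} so that $r_{[n_0,n_1]}=r\geq 5$, and take $p_0,\ldots,p_5$ as the first six components. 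The point $\lambda^\ast=[1:\sqrt{-1}:0:0:0:0]$ then lies in $\mathcal{Q}_4$ with $r_{\lambda^\ast}=r\geq 5$, so the top stratum $\mathcal{L}_{\geq 5}$ is Zariski open in $\mathcal{Q}_4$.

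Next I would proceed case by case on $(k,j)$. For $k\leq 3$ the inequality~\eqref{est} reads $c_j\geq 2k-7-j\leq -1$, which is trivial. For $k=4$ the only tight case is $j=0$; here, being away from Condition A, the basis is not 0-null (Corollary~\ref{grk}), so Lemma~\ref{sharper} applies and reduces the requirement to $c_0\geq 0$, automatic. For $k=5$, the original bound $c_j\geq 3-j$ or its refinement $c_j\geq 2-j$ needs to be verified for $j\leq 2$. The case $j=2$ is handled by the original estimate, since $\mathcal{L}_2\subsetneq\mathcal{Q}_4$ (as $\sup r_\lambda\geq 5>2$) forces $c_2\geq 1$. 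For $j=1$, Lemma~\ref{1null} rules out 1-null basis elements away from Condition A, so Lemma~\ref{sharper} applies and reduces the requirement to $c_1\geq 1$, again immediate from $\sup r_\lambda\geq 5$.

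The crux is $k=5$, $j=0$, where the refined estimate demands $c_0\geq 2$, i.e., $\mathcal{L}_0\subset\mathcal{Q}_4$ must have codimension at least 2. For $\lambda\in\mathcal{L}_0$ the normalized frame $(\tilde n_0,\tilde n_1)$ satisfies $B_{\tilde 1}=0$. To control the deformations of $\lambda$ inside $\mathcal{L}_0$, I would invoke Corollary~\ref{inde}(2) together with the global $r$-nullity from Corollary~\ref{COR}: since some normal direction $n_c$ has $B_c$ of rank $\geq 5$, the Clifford-module divisibility argument in the proof of Corollary~\ref{inde} forbids too many other normal directions from sharing the zero-row structure required for staying in $\mathcal{L}_0$, cutting the dimension of admissible deformations to at most 2. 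This is the principal technical obstacle of the proof; the rigorous verification will require a careful combination of the Hurwitz identities~\eqref{conv} with the variational analysis underlying Remark~\ref{importantremark}. Once the codimension 2 estimate~\eqref{prime} is established for $k=0,\ldots,5$, Serre's normality criterion yields that $V_0,\ldots,V_5$ are irreducible varieties and hence that $p_0,\ldots,p_6$ form a regular sequence.
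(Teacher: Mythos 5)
Your outline matches the paper's strategy for the easy strata: the same choice of $p_0,\ldots,p_5$ realizing $\sup_{\lambda\in{\mathcal Q}_4}r_\lambda\geq 5$, the a priori bound~\eqref{est} for $k\leq 3$ and for the large-$j$ strata, and the refinement of Lemma~\ref{sharper} via Lemma~\ref{1null} for $j=1$ (your treatment of $k=4$, $j=0$ via Lemma~\ref{sharper} rather than via Corollary~\ref{inde}(2) giving $c_0\geq 1$ is a harmless variant, modulo the fact that Lemma~\ref{sharper}'s stated hypothesis is that \emph{no} basis element is $0$-null, while being away from Condition A only guarantees that \emph{some} element is not $0$-null; that is what the proof of Lemma~\ref{sharper} actually uses, so this is minor).

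The genuine gap is exactly where you flag "the principal technical obstacle": the case $k=5$, $j=0$. You propose to prove $c_0\geq 2$, i.e.\ that ${\mathcal L}_0$ has codimension at least $2$ in ${\mathcal Q}_4$, by a Clifford-module divisibility argument drawn from Corollary~\ref{inde} and Corollary~\ref{COR}, but no such bound is established (Corollary~\ref{inde} only yields $c_0\geq 1$-type information, and Corollary~\ref{COR} with $r\geq 5$ only kills a small upper-left block, which does not control deformations inside ${\mathcal L}_0$), and the paper does not prove $c_0\geq 2$ either. Instead, the paper keeps $c_0\geq 1$ and recovers the missing dimension on the fiber side: for $\lambda\in{\mathcal L}_0$ one has $B_{\tilde 1}=C_{\tilde 1}=0$, $A_{\tilde 1}=Id$, so $p_{\tilde 0}=0$ cuts ${\mathscr S}_\lambda$ into~\eqref{free} with $z$ free; then $p_{\tilde 2}=0$ and one further $p_{\tilde j}=0$, $j\in\{3,4,5\}$, give two additional independent cuts, the nontriviality and irreducibility of which are supplied by Lemma~\ref{import} in Appendix I. The degenerate alternative, namely that all $p_{\tilde j}|_{{\mathscr S}_\lambda}$, $j\geq 3$, are proportional to $p_{\tilde 2}|_{{\mathscr S}_\lambda}$ (\eqref{dep}--\eqref{combo}), is then excluded by a linear-algebra normalization producing a $4$-dimensional normal subspace all of whose shape operators have vanishing $B$ and $C$ blocks, hence a \emph{unique} quadric ${\mathcal Q}_2$ through $\lambda$ inside the component ${\mathcal C}\subset{\mathcal L}_0$; since $\dim{\mathcal C}\leq 3$ in ${\mathcal Q}_4$, any two such quadrics must intersect ($2+2-3\geq 1$), contradicting uniqueness. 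Without this mechanism (or a genuine proof of $c_0\geq 2$), your argument does not close the crucial case, and consequently the regular-sequence conclusion is not reached.
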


\begin{proof} Recall the {\em a priori} codimension 2 estimate~\eqref{est}, which is

\begin{equation}\label{eest}
8=m_{-}\geq 2k+1-j-c_j,
\end{equation}
where ${\mathcal L}_j$ and $c_j$ are defined in~\eqref{L} and~\eqref{cod}. 
It verifies that the codimension 2 estimate
goes through for $k\leq 3$ and any choice of $p_0,\cdots,p_3$.

For $k=4$, the estimate goes through for $j\geq 1$. However, since
$M_{+}$ away from points of Condition A is not 0-null, item (2) of Corollary~\ref{inde} implies
that for
$k=4$, ${\mathcal L}_0$ is of 
codimension at least 1 in ${\mathcal Q}_3$ (i.e., $c_0\geq 1$), because
by the corollary there must be a $\lambda\in{\mathcal Q}_3$ for which
$r_\lambda\neq 0$; therefore, the codimension 2
estimate goes through, for any choice of $p_0\cdots p_4$. In particular, $V_0,\cdots,V_4$ are irreducible
and any choice of $p_0,\cdots,p_5$ form a regular sequence.

For $k=5$, we pick $p_0,\cdots,p_5$ such that

\begin{equation}\label{sup}
\sup_{\lambda\in{\mathcal Q}_4} r_\lambda\geq 5.
\end{equation}
Note that~\eqref{eest}, which is now

\begin{equation}\label{estim}
8\geq 11-j-c_j,
\end{equation}
implies that the codimension 2 estimate automatically goes through
for $j\geq 3$. 

In general, for $j\leq 4$, ${\mathcal L}_j\subset{\mathcal Q}_4$ is not
generic by~\eqref{sup}, so that $c_j\geq 1$. Hence,~\eqref{estim} also takes care of the codimension 2 estimate for
$j=2$. Moreover, since by Lemma~\ref{1null},  $M_{+}$ is not $j$-null for
$j=1$, the refined codimension 2 estimate~\eqref{eee}, which is

\begin{equation}\label{In}
8=m_{-}\geq 2k-j-c_j,
\end{equation}
is satisfied for $j=1,k=5$ and $c_j\geq 1$; so, the codimension 2 estimate
goes through for $j=1$ as well.

For $j=0$,~\eqref{In} is ineffective as its right hand side is 9 with $c_j\geq 1$; we need to cut down one more dimension
from its right hand side. That is, more fundamentally we must effectively cut
${\mathscr S}_\lambda,\lambda\in{\mathcal L}_0,$ for
generic $\lambda\in{\mathcal L}_0$.

Note, however, 
notation as in Convention~\ref{convention}, since $r_{\lambda}=0$ for $\lambda\in{\mathcal L}_0$, we have $B_{{\tilde 1}}=C_{\tilde{1}}=0$ and $A_{{\tilde 1}}=Id$ in~\eqref{A}
for $S_{{\tilde 1}}$.
It follows that $p_{\tilde{0}}=0$ cuts ${\mathscr S}_\lambda$ in the variety

\begin{equation}\label{free}
\{(x,\pm \sqrt{-1}x,z):\sum_{\alpha=1}^8 (x_\alpha)^2=0\}.
\end{equation}
We may assume $(B_{\tilde{2}},C_{\tilde{2}})$ of $S_{\tilde{2}}$ is nonzero away from points of Condition A.
Since $z$ is a free variable in~\eqref{free},
$p_{\tilde{2}}=0$ cuts ${\mathscr S}_\lambda$ to result in the equation
with nontrivial $z$-terms:

\begin{eqnarray}\label{00}
\aligned
0=p_{\tilde{2}}=\sum_{\alpha=1,p=1}^{8,7}(S_{\alpha p}^2\pm\sqrt{-1}T_{\alpha p}^2)x_\alpha z_p.
\endaligned
\end{eqnarray}
Hence by Lemma~\ref{import} in Appendix I, 
$p_{\tilde{2}}=0$ introduces a nontrivial cut into
${\mathscr S}_\lambda$ to reduce the dimension estimate by 1, and more importantly the variety ${\mathscr F}_2$ cut out by 
$p_{\tilde{0}}=p_{\tilde{2}}=0$
in~\eqref{free} and~\eqref{00} is irreducible.
Indeed, we have seen before that this gives~\eqref{In}. 

To cut one more dimension, we remark that one of the pairs
$(B_{\tilde{3}},C_{\tilde{3}})$, $(B_{\tilde{4}},C_{\tilde{4}})$, and $(B_{\tilde{5}},C_{\tilde{5}})$ is
nonzero, to be in accordance with item (2) of Corollary~\ref{inde}. Hence we may assume
none of them are zero by a generic rotation
of the basis elements $n_{\tilde{3}},n_{\tilde{4}},n_{\tilde{5}}$;
note that, with this, the variety ${\mathscr F}_i$ cut out by
$p_{\tilde{0}}=p_{\tilde{i}}=0,3\leq i\leq 5,$ is also irreducible for the same reason
as ${\mathscr F}_2$.

When ${\mathscr F}_2$ and ${\mathscr F}_j$ are
distinct for some $j=3,4,5$. Then ${\mathscr F}_2\cap {\mathscr F}_j$
is of one dimension lower, i.e.,
$p_{\tilde{0}}=p_{\tilde{2}}=p_{\tilde{j}}=0$ cuts down one more dimension in
${\mathscr S}_\lambda$ by Lemma~\ref{import} in Appendix I, so that the right hand side of~\eqref{In} is dropped
by 1 and so the codimension 2 estimate goes through.

We must then rule out the possibility that ${\mathscr F}_k,2\leq k\leq 5,$ are
all identical, or equivalently, that $p_{\tilde{j}},j=3,4,5,$ restricted to ${\mathscr S}_\lambda$
are all constant multiples of $p_{\tilde{2}}$. That is,

\begin{equation}\label{dep}
S_{\alpha p}^i\pm\sqrt{-1}T_{\alpha p}^i
=c_i(S_{\alpha p}^2\pm\sqrt{-1}T_{\alpha p}^2),
\end{equation}
for some nonzero complex numbers $c_i,3\leq i\leq 5.$

Write
$$
c_i=a_i+\sqrt{-1}b_i
$$
for some real numbers $a_i,b_i$. Then we obtain

\begin{equation}\label{S}
\aligned 
&S^3_{\alpha p}=a_3S^2_{\alpha p}-b_3T^2_{\alpha p},\quad
T^3_{\alpha p}=b_3S^2_{\alpha p}+a_3T^2_{\alpha p},\\
&S^4_{\alpha p}=a_4S^2_{\alpha p}-b_4T^2_{\alpha p},\quad
T^4_{\alpha p}=b_4S^2_{\alpha p}+a_4T^2_{\alpha p},\\
&S^5_{\alpha p}=a_5S^2_{\alpha p}-b_5T^2_{\alpha p},\quad
T^5_{\alpha p}=b_5S^2_{\alpha p}+a_5T^2_{\alpha p}.
\endaligned
\end{equation}
Choose a nonzero solution $(x,y,z),x^2+y^2+z^2=1,$ to
\begin{equation}\label{xyz}
a_3x+a_4y+a_5z=0,\quad b_3x+b_4y+b_5z=0.
\end{equation}
Then it is easily seen that

\begin{equation}\label{combo}
xS^3_{\alpha p}+yS^4_{\alpha p}+zS^5_{\alpha p}=0.
\quad xT^3_{\alpha p}+yT^4_{\alpha p}+zT^5_{\alpha p}=0.
\end{equation}
That is, the shape operator $S_n:=xS_{\tilde{3}}+yS_{\tilde{4}}+zS_{\tilde{5}}$
has the property that its $B$ and $C$ blocks are identically
zero. So we may now
assume the $B$ and $C$ blocks of $S_{\tilde{5}}$ are zero.

We may now ignore
the above $a_5$ and $b_5$ in~\eqref{xyz}. Any nonzero solution $(x,y)$ that solves
the second
equation of~\eqref{xyz} implies that there is a real number $c$, namely,
$c=a_3x+a_4y$, such that the $B$ and $C$ blocks of $S_n:=xS_{\tilde{3}}+yS_{\tilde{4}}$ are $c$ times of $B_{\tilde{2}}$ and $C_{\tilde{2}}$,
respectively. But then $S_{n'}$, where $n'=(n_{\tilde{2}}-cn)/\sqrt{1+c^2}$,
has the property that the $B$ and $C$ blocks of 
$S_{n'}$ are zero. This means that we can now assume that the $B$ and $C$ blocks of
$S_{\tilde{4}}$ are zero, with possible new $S_{\tilde{2}}$ and $S_{\tilde{3}}$ out of the Gram-Schmidt
process. It follows that neither $(B_{\tilde{2}},C_{\tilde{2}})$
nor $(B_{\tilde{3}},C_{\tilde{3}})$ are zero
to not to violate item (2) of Corollary~\ref{inde}.

We are now led to the conclusion that if an irreducible component
${\mathcal C}$
of ${\mathcal L}_0$ is such that, the codimension 2 estimate is not true
for all $\lambda\in {\mathcal C}$, then each $\lambda\in{\mathcal C}$ is contained in
one and only one quadric ${\mathcal Q}_2\subset {\mathcal C}$, which is the set of 2-planes in the
4-dimensional Euclidean space spanned by ${\tilde n}_0,{\tilde n}_1,{\tilde n}_4,{\tilde n}_5$ given in the
preceding two
paragraphs, where $\lambda$ is the 2-plane spanned by ${\tilde n}_0,{\tilde n}_1$; in fact, this 4-dimensional linear space is characterized by that the shape operators $S_n$ of 
all unit $n$ in it share a common kernel (the Condition A for them). 
However, any two ${\mathcal Q}_2$ in ${\mathcal C}$ of dimension at most 3 in ${\mathcal Q}_4$ will intersect in
at least $2+2-3=1$ dimensional worth of points by a dimension count, so that
each of these points of intersection is contained in more than one
${\mathcal Q}_2$ in ${\mathcal C}$. This is a contradiction.
The contradiction implies that the codimension 2 estimate is true for at least one,
and hence, for generic $\lambda\in{\mathcal C}$.
\end{proof}

From now on, we assume that the isoparametric hypersurface is {\em not} the one constructed by Ozeki and Takeuchi,
and, by Lemma~\ref{la}, away from points of Condition A on $M_{+}$,  that $p_0,\cdots,p_5$ form a regular
sequence and $p_0=\cdots=p_5=0$ carves out an irreducible variety $V_5$. It follows
that $p_0,\cdots,p_6$ form a regular sequence for any choice of $p_6$~\cite[Corollary 1, p. 6]{Chiq}.
By~\eqref{sup}, we also have

$$
\sup_{\lambda\in{\mathcal Q}_5} r_{\lambda}\geq 5.
$$
We know the codiemsnion 2 estimate~\eqref{In} can no longer go through
for $k=6$; or else $p_0,\cdots,p_7$ would be a regular sequence and
the isoparametric hypersurface would be the one constructed by Ozeki and Takeuchi~\cite[Proposition 4, p. 11]{Chiq}.
Let us understand how and where the codimension 2 estimate fails in this case.

For $k=6$, when $(m_{+},m_{-})=(7,8)$, we record that the {\em a priori} codimension 2
estimate~\eqref{eest} is now

\begin{equation}\label{est1}
8=m_{-}\geq 13-j-c_j.
\end{equation}
So clearly it holds when $j\geq 4$ since $c_j\geq 1$ for $j\leq 4$.

For $j=3$, the codimension 2 estimate goes through as well as long as
$c_j\geq 2$. So in the following we assume $c_j=1$. We claim that the
condition in Lemma~\ref{3null} is satisfied
so that Lemma~\ref{sharper} allows us to employ the
refined codimension 2
estimate~\eqref{In}, which is now,

\begin{equation}\label{est2}
8=m_{-}\geq 12-j-c_j,
\end{equation}
to conclude that the codimension 2 estimate goes through with $j=3$ and
$c_j=1$.
To prove the claim, it suffices to establish the following Lemma.

\begin{lemma}\label{LA} Let ${\mathcal C}$ be
an irreducible component of ${\mathcal L}_j$. Suppose ${\mathcal C}$ is of codimension $1$ in ${\mathcal Q}_5$ {\rm (}i.e., $c_j=1${\rm)}. Then there is
a $\lambda\in{\mathcal C}$, which is the $2$-plane spanned by ${\tilde n}_0,{\tilde n}_1$,
such that there is an ${\tilde n}_2$ perpendicular to ${\tilde n}_0,{\tilde n}_1$ for which $B_{\tilde 2}$
is of rank at least $5$.
\end{lemma}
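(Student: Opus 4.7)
I plan to argue by contradiction. Suppose that for every $\lambda\in\mathcal{C}$, with its normalized frame $(\tilde n_0,\tilde n_1)$, and every unit $\tilde n_2\perp \tilde n_0,\tilde n_1$, the matrix $B_{\tilde 2}$ has rank at most $4$. By Remark~\ref{RK}, $\mathrm{rank}\,B_{\tilde 2}=m_{+}-\dim(\ker S_{\tilde n_0}\cap\ker S_{\tilde n_2})=r_{\mathrm{span}(\tilde n_0,\tilde n_2)}$, so the contradiction hypothesis is equivalent to asserting that, for every such $\lambda$, the entire $4$-real-dimensional family $\mathcal F_\lambda:=\{\mathrm{span}(\tilde n_0,\tilde n_2):\tilde n_2\perp\tilde n_0,\tilde n_1\}\subset\mathcal Q_5$ is contained in the closed subvariety $\mathcal L_{\le 4}:=\{\mu\in\mathcal Q_5:r_\mu\le 4\}$. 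The hypothesis $\sup_\mu r_\mu\ge 5$ forces $\mathcal L_{\le 4}$ to be a proper closed subvariety of $\mathcal Q_5$, hence of real dimension at most $9$.

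Next I pick a generic smooth point $\lambda_0\in\mathcal{C}$ at which $\mathcal L_{\le 4}$ coincides locally with $\mathcal{C}$ (away from the finitely many intersections with lower-rank strata $\mathcal L_{\le j'}$, $j'<j$). Since $\mathcal{C}$ has codimension $1$, any transverse direction in $T_{\lambda_0}\mathcal Q_5$ leaves $\mathcal L_{\le 4}$ and enters $\mathcal U_5:=\mathcal Q_5\setminus\mathcal L_{\le 4}$. Integrating such a transverse direction produces an analytic curve $\lambda(\theta)$ in $\mathcal Q_5$ through $\lambda_0$ with $r_{\lambda(\theta)}\ge 5$ for generic $\theta\ne 0$. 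Following the Frenet-frame construction of Remark~\ref{importantremark}, after rotating within the codimension-$1$ freedom of $T_{\lambda_0}\mathcal{C}$ I can arrange the tangent to be of the form ``move only $\tilde n_1$'', so that the curve reads $\lambda(\theta)=\mathrm{span}(\tilde n_0,\cos\theta\,\tilde n_1+\sin\theta\,\tilde n_2)$ for a distinguished unit vector $\tilde n_2\perp\tilde n_0,\tilde n_1$. Along this curve, $B(\theta)=\cos\theta\,B_{\tilde 1}+\sin\theta\,B_{\tilde 2}$ has rank at least $5$ for generic $\theta$, while $B_{\tilde 1}$ has rank $j\le 4$.

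The hard part will be extracting $\mathrm{rank}\,B_{\tilde 2}\ge 5$ from the generic high rank of $B(\theta)$. A direct column-space perturbation analysis of the rank jump from $B_{\tilde 1}$ to $B(\theta)$ only yields $\mathrm{rank}\,B_{\tilde 2}\ge 5-j\ge 1$, which falls short. To close the gap, I would combine the $r$-nullity structural information on $\tilde n_2$ supplied by Remark~\ref{importantremark} with the Hurwitz-type algebraic identities~\eqref{conv} on the blocks $A_{\tilde 2},B_{\tilde 2},C_{\tilde 2}$ of $S_{\tilde 2}$, in the spirit of the determinantal analyses used in Corollary~\ref{inde}, Lemma~\ref{1null}, and especially Lemma~\ref{3null}(2). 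Equivalently, reformulating, one lets $\tilde n_2$ vary over the $4$-sphere of directions perpendicular to $\tilde n_0,\tilde n_1$; the bad set $\{\tilde n_2:\mathrm{rank}\,B_{\tilde n_2}\le 4\}$ is then Zariski closed, and the task becomes to exclude the possibility that it equals the entire $4$-sphere — an exclusion that must ultimately be forced by the interaction between the spectral normalization of Convention~\ref{convention} and the non-emptiness of $\mathcal U_5$. This is the technical core of the argument.
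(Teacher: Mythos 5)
There is a genuine gap: your argument never actually establishes the rank bound, and you say so yourself ("this is the technical core of the argument"). Worse, the tools you propose to close it point in the wrong direction. Remark~\ref{importantremark} is designed for the opposite situation: it takes an analytic curve $\lambda(\theta)$ that \emph{stays} in a fixed rank stratum (rank of $B(\theta)$ equal to $r$ for generic $\theta$) and concludes that the Frenet direction $\tilde n_2$ is again of low rank ($r$-null, rank $\le r$); it does not let you convert "rank of $B(\theta)\ge 5$ for generic $\theta\ne 0$" into "rank of $B_{\tilde 2}\ge 5$," and as you note, a direct perturbation of column spaces only gives $\operatorname{rank}B_{\tilde 2}\ge 5-j$. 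There is also a structural problem with the reduction itself: you replace the contradiction hypothesis by "$\mathcal{L}_{\le 4}$ coincides locally with $\mathcal{C}$ near a generic smooth point," but $\mathcal{L}_{\le 4}$ need not equal $\mathcal{C}$ near $\lambda_0$ (other codimension-one components or larger pieces of lower strata may pass through), and your curve construction requires exactly this identification to guarantee that leaving $\mathcal{C}$ transversally means entering the rank-$\ge 5$ locus along a curve of the special form $\cos\theta\,\tilde n_1+\sin\theta\,\tilde n_2$ with the same $\tilde n_0$. None of this is justified, and the Hurwitz identities~\eqref{conv} by themselves will not force the conclusion.

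The paper's proof avoids all of this with a short global dimension count on an incidence variety, not a local perturbation. One sets $\mathcal{I}=\{(\tilde n,\lambda)\in S^6\times\mathcal{C}:\tilde n\perp\tilde n_0,\tilde n_1,\ \lambda=\operatorname{span}(\tilde n_0,\tilde n_1)\}$; since $\mathcal{C}$ has real dimension $8$ and each fiber of $\pi_2$ is the $4$-sphere of unit normals perpendicular to $\lambda$, $\mathcal{I}$ is $12$-dimensional. For fixed $\tilde n$, the fiber $\pi_1^{-1}(\tilde n)$ is $\mathcal{C}$ intersected with the quadric $\mathcal{Q}_4$ of oriented $2$-planes in $\tilde n^{\perp}$, which is (complex) $3$-dimensional, i.e.\ real dimension $6$; hence $\pi_1(\mathcal{I})$ is $6$-dimensional and $\pi_1$ is surjective onto $S^6$. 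Since under the standing assumption $\sup_\lambda r_\lambda\ge 5$ a generic direction of $S^6$ carries a $B$-block of rank $\ge 5$, surjectivity lets one choose such a generic $\tilde n$ lying over some $\lambda\in\mathcal{C}$ and take it as $\tilde n_2$. If you want to salvage your write-up, replace the curve/Frenet step by this incidence-correspondence argument; the contradiction framing then becomes unnecessary.
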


\begin{proof} Let $S^6$ be the unit sphere in the linear space spanned by $n_0,\cdots,n_6$. Consider the incidence space
$$
{\mathcal I}=\{({\tilde n},\lambda)\in S^6\times {\mathcal C}:\; {\tilde n}\perp {\tilde n}_0,{\tilde n}_1;\;
\lambda=\text{span}({\tilde n}_0,{\tilde n}_1)\}
$$
with the projection $\pi_1$ and $\pi_2$ onto the first and second factors, respectively.
${\mathcal I}$ is (real) 12-dimensional because for each
$\lambda=\text{span}({\tilde n}_0,{\tilde n}_1),$ the set $\pi_2^{-1}(\lambda)$ is the unit
4-sphere in the span of ${\tilde n}_2,\cdots,{\tilde n}_6$ perpendicular to ${\tilde n}_0,{\tilde n}_1$.

We show that $\pi_1$ is surjective. For each ${\tilde n}$ in the image of $\pi_1$, the set
$\pi_1^{-1}({\tilde n})$ consists of all
$({\tilde n},\lambda),\lambda=\text{span}({\tilde n}_0,{\tilde n}_1)\in{\mathcal C},$
such that ${\tilde n}\perp {\tilde n}_0,{\tilde n}_1$; therefore, $\pi_1^{-1}({\tilde n})$ is the
intersection of ${\mathcal C}$
and the variety ${\mathcal G}\simeq {\mathcal Q}_4$ of oriented
2-planes in ${\tilde n}^{\perp}\simeq{\mathbb R}^6$ with ${\tilde n}$ in the span of $n_0,\cdots,n_6$
and so $\pi_1^{-1}({\tilde n})={\mathcal G}\cap{\mathcal C}$ is (complex) 3-dimensional.
As a result, $\pi_1({\mathcal I})$ is (real) 6-dimensional contained in $S^6$ and so
$\pi_1$ is surjective.

We can now pick a generic ${\tilde n}\in S^6$ whose associated ${\mathcal G}\cap{\mathcal C}$ recovers ${\tilde n}_0,{\tilde n}_1$ and designate this ${\tilde n}$ to be ${\tilde n}_2$.
Then $B_{\tilde 2}$ of $S_{{\tilde n}_2}$ assumes generic rank $\geq 5$.
\end{proof}

In view of the preceding lemma, if there is a $\lambda\in{\mathcal L}_3$ whose spectral data satisfy the condition in item (1) of Lemma~\ref{3null}, then the codimension 2 estimate goes through since the normal basis cannot be 3-null. 

Otherwise, the spectral data of all $\lambda\in{\mathcal L}_3$ satisfy the condition in item (2) of Lemma~\ref{3null}.
Now, pick a generic point
$\lambda\in{\mathcal C}$ spanned by $n_{\tilde{0}},n_{\tilde{1}}$.
Let $S_{\tilde{0}}$ and $S_{\tilde{1}}$ be normalized
as in~\eqref{BC} and~\eqref{A} and extend them to
$S_{\tilde{0}}\cdots,S_{\tilde{6}}$. Consider the $S^5\subset{\mathcal Q}_5$ given by
$[1:\lambda_1:\cdots:\lambda_6]$, where $\lambda_1,\cdots,\lambda_6$ are
purely imaginary. Note that $\lambda=[1:\sqrt{-1}:0:\cdots:0]$ in
$S^5\cap{\mathcal C}$. Now,
\begin{equation}\label{dim}
\dim(S^5\cap{\mathcal C})\geq 5+8-10=3,
\end{equation}
where 10 is the real dimension of ${\mathcal Q}_5$.

 This dimension estimate implies that the closure $\Lambda$ of the irreducible component of  $S^5\cap{\mathcal C}$ containing $\lambda$ coincides with the unit 3-sphere
of the span of $\tilde{n}_1,{\tilde n}_4,\tilde{n}_5,\tilde{n}_6$. This is because by the concluding paragraph of Remark~\ref{importantremark}, the closure of
the irreducible component of $S^5\cap{\mathcal C}$ containing $\tilde{n}_1$ is a sphere whose generic $\tilde{n}$ 
is $3$-null. Thus,~\eqref{dim} implies that 
there are at least three such independent $\tilde{n}$, so that there are exactly three such independent $\tilde{n}$, namely, $\tilde{n}_4,\tilde{n}_5,{\tilde n}_6$
for $\tilde{n}_1,\tilde{n}_4,\tilde{n}_5,\tilde{n}_6$ to bound a 3-sphere, because  $\tilde{n}_2$ is not 3-null since otherwise by item (2) of Lemma~\ref{3null} the rank of $B_{\tilde{2}}$ would be 3, contradicting its being $\geq 5$ as said in Lemma~\ref{LA}, and, consequently, 
$\tilde{n}_3$ is not 3-null either by virtue of~\eqref{dep}. But then item (2) of Lemma~\ref{3null} implies that all linear combinations of $B_{\tilde{4}},B_{\tilde{5}},$ and $B_{\tilde{6}}$ are of the form in~\eqref{e1} with the $b$-block zero. It follows by item (2) of Lemma~\ref{3null} that a generic point of the quadric
${\mathcal Q}_3$, defined to be the set 
of 2-planes in the span of $\tilde{n}_0,\tilde{n}_1,\tilde{n}_4,\tilde{n}_5,\tilde{n}_6$, is contained in ${\mathcal C}$, and moreover, 
this ${\mathcal Q}_3$ is the unique 3-quadric containing $\lambda$ in the closure of ${\mathcal C}$ (because $\Lambda=S^3$).

But then, we can take a generic combination of $B_{\tilde 2},\cdots,B_{\tilde 6}$ , which is of rank 5, and call it 
$B_{2'}$ with normal direction $n_2'$. We then go through the same argument as above to conclude that we can come up with normal vectors $n_4',n_5',n_6'$ such that ${\tilde n}_0,{\tilde n}_1,n_4',n_5',n_6'$ generate a ${\mathcal Q}_3^{'}$ contained in the closure of ${\mathcal C}$ different from the above ${\mathcal Q}_3$, both containing $\lambda$. This contradicts the uniqueness of such ${\mathcal Q}_3$.


 

For $j=2$, Lemma~\ref{LA} enforces item (1) of Lemma~\ref{2null},
so that Lemma~\ref{sharper} allows us to 
warrant the validity of~\eqref{est2},
where the right hand side is $\leq 8$; with $c_j\geq 2$ the codimension 2 estimate
holds. Henceforth, we assume $c_j=1$ and so ${\mathcal C}\subset {\mathcal Q}_5$ given in Lemma~\ref{LA} is of (complex) dimension 4. The right hand side of~\eqref{est2} is 9;
we need to cut down one more dimension for the codimension 2 estimate to go
through. We spell out more details.

For $\lambda\in{\mathcal L}_2$, $p_{\tilde{0}}=p_{\tilde{1}}=0$
cuts ${\mathscr S}_\lambda$ in the variety (see Lemma~\ref{sharper})
$$
\{(X_1,X_2,Y_1,Y_2,Z_1,Z_2)\},
$$
where $X_1=(x_1,\cdots,x_{6})$ satisfies 

\begin{equation}\label{I'}
\sum_{\alpha=1}^{6} x_\alpha^2=0,
\end{equation}
$X_1=\pm\sqrt{-1}Y_1$, $X_2=-Y_2$ and $Z_2$ depends linearly on $X_2$.
Moreover, for $2\leq l\leq 6$,

\begin{equation}\label{II'}
p_{l^*}=\sum_{\alpha=1, p=1}^{6,5}
(S^l_{\alpha p}+\pm\sqrt{-1} T^l_{\alpha p})x_\alpha z_p +\; {\rm other\; terms}.
\end{equation}
We may assume the displayed sum is nontrivial for $l=2$ since 2-nullity is impossible by item (1) of Lemma~\ref{2null}.
~\eqref{I'} and~\eqref{II'} imply that $p_{\tilde{0}}=p_{\tilde{2}}=0$
cuts down one more dimension in ${\mathscr S}_\lambda$ to carve out an irreducible variety
${\mathcal F}_2$ by Lemma~\ref{import2} in Appendix I, so that the lower
bound in~\eqref{est2}, which is now 9, is achieved.

To cut down one more dimension to reach 8 on the right hand side of~\eqref{est2}, observe that if ${\mathcal F}_j$, the
irreducible variety of ${\mathscr S}_\lambda$ cut out by
$p_{\tilde{0}}=p_{\tilde{j}}=0,3\leq j\leq 6,$ is distinct from ${\mathcal F}_2$,
then one more dimension cut can be achieved by Lemma~\ref{import2} in Appendix I, so
that the codimension 2 estimate holds.

So now, we must rule out the case that all ${\mathcal F}_j,3\leq j\leq 6,$ are identical with
${\mathcal F}_2$. Suppose they were all identical. It would then follow by a similar argument as in~\eqref{dep} through~\eqref{combo} in Lemma~\ref{la} that
the displayed part of $p_{\tilde{4}},p_{\tilde{5}},p_{\tilde{6}}$ in~\eqref{II'} are all zero.
We could then employ the same arguments immediately following Lemma~\ref{LA} as for $j=3$, with obvious modifications while invoking item (2) of Lemma~\ref{2null}, to reach a contradiction. Thus, generic $\lambda\in{\mathcal C}$
satisfies the codimension 2 estimate.

For $j=1$, Lemma~\ref{1null}
allows us to apply Lemma~\ref{sharper} to obtain~\eqref{est2},
whose right hand side is
10 obtained by setting $p_{\tilde{0}}=p_{\tilde{2}}=0$ as usual.

Now, not all $p_{\tilde{j}},j\geq 3$ are multiples of $p_{\tilde{2}}$ when restricted to
${\mathscr S}_\lambda$; for otherwise,
we can argue exactly as in~\eqref{S},~\eqref{xyz} and~\eqref{combo} to obtain $p_{\tilde{6}}=0$ when restricted to ${\mathscr S}_\lambda$
so that the basis element $\tilde{n}_6$ is 1-null, which is impossible by Lemma~\ref{1null}.
So we may assume $p_{\tilde{2}}$ and $p_{\tilde{3}}$ are linearly independent when
restricted to
${\mathscr S}_\lambda.$ Then employing the same arguments one more time
we can conclude that we may assume $p_{\tilde{2}},p_{\tilde{3}},p_{\tilde{4}}$ are linearly
independent when restricted to ${\mathscr S}_\lambda$. Lemma~\ref{import1}
in Appendix I then enables us to further cut down 2 more dimensions from the right hand side
of~\eqref{est2}, so that the codimension 2 estimate holds.

Lastly, for $j=0$, no bases being 0-null lets us utilize~\eqref{est2}
whose right hand side is 11. We may assume
$p_{{\tilde 2}},p_{{\tilde 3}},p_{{\tilde 4}}$ (understood to be restricted to
${\mathscr S}_\lambda$ in the following) are
independent to be in accordance with item (2) of Corollary~\ref{inde}. For otherwise, a nontrivial linear combination of each of the triples  $(\tilde{n}_2,\tilde{n}_3,\tilde{n}_4)$,  $({\tilde n}_2,{\tilde n}_ 3, {\tilde n}_5)$, and $({\tilde n}_2,{\tilde n}_3,{\tilde n}_6)$ would result in three independent normal directions $n_1',n_2',n_3'$ for which the $B$ and $C$ blocks of the corresponding shape operators $S_{n_1'},S_{n_2'},S_{n_3'}$ are zero to violate Corollary~\ref{inde}.
 If $p_{{\tilde 5}}$ and $p_{{\tilde 6}}$ (understood to be restricted to
${\mathscr S}_\lambda$
in the following)
are both dependent on $p_{{\tilde 2}},p_{{\tilde 3}},p_{{\tilde 4}}$, then as before after a basis change
we may assume $p_{{\tilde 5}}$ and $p_{{\tilde 6}}$ are zero. However, this implies that,
as in the ending arguments in Lemma~\ref{la},
through $\lambda$ there is a unique ${\mathcal Q}_2$ in the irreducible component
${\mathcal C}$ of ${\mathcal L}_0$ where $\lambda$ belongs. Since
$\dim({\mathcal C})\leq 4$, we see as before that any two such quadrics
have nonempty intersection in ${\mathcal C}$, a contradiction. Hence, we may assume
that $p_{{\tilde 2}},\cdots,p_{{\tilde 5}}$ are linearly independent. Lemma~\ref{import} in Appendix I implies that
$p_{{\tilde 3}}=p_{{\tilde 4}}=p_{{\tilde 5}}=0$ now cuts down three more dimensions from the right hand side
of~\eqref{est2}. That is, the codimension 2 estimate goes through.

It follows that
the codimension 2 estimate holds for $k=6$ if the generic rank of
$r_\lambda\geq 5$ for $\lambda\in{\mathcal Q}_5$; 
the isoparametric hypersurface is thus the one constructed by Ozeki and
Takeuchi. This is impossible. So, we conclude the following.

\begin{lemma}\label{r} Let $(m_{+},m_{-})=(7,8)$. Suppose the isoparametric
hypersurface is not the one constructed by Ozeki and Takeuchi. Away from points of Condition A on $M_{+}$, given an orthonormal pair $(n_0,n_1)$ of
normal vectors of $M_{+}$, let $S_{n_0}$ and $S_{n_1}$ be normalized as
in~\eqref{0a},~\eqref{BC} and~\eqref{A}. 
Then the rank of the $B_1$ {\rm (}and $C_1${\rm )} of $S_{n_1}$ is $\leq 4$ for any choice of $n_0$. 
\end{lemma}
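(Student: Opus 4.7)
The plan is to argue by contradiction: assume that at some point away from Condition A there exists an oriented orthonormal pair $(n_0,n_1)$ with the rank of $B_1$ at least $5$. Then by definition $\sup_{\lambda\in{\mathcal Q}_6} r_\lambda\geq 5$, so Lemma~\ref{la} already guarantees that $p_0,\dots,p_5$ form a regular sequence carving out an irreducible $V_5$, and (by \cite[Cor.~1]{Chiq}) that $p_0,\dots,p_6$ is also a regular sequence. The goal is therefore to promote this one more step: show that the codimension~$2$ estimate \eqref{prime} survives at the level $k=6$, which would force $p_0,\dots,p_7$ to form a regular sequence and hence, by \cite[Prop.~4]{Chiq}, identify the hypersurface with the Ozeki--Takeuchi example, contradicting our standing assumption.

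To carry out the $k=6$ estimate I would stratify ${\mathcal Q}_5$ by the level sets ${\mathcal L}_j$ of $r_\lambda$ and handle each $j$ separately, using the {\em a priori} bound \eqref{est1} $m_-\geq 13-j-c_j$ as baseline and the refined bound \eqref{In} (i.e.~\eqref{est2}) when non-$j$-nullity permits cutting by an extra $p_{\tilde l}=0$. The case $j\geq 4$ is immediate since $c_j\geq 1$. For $j=3$, Lemma~\ref{LA} produces an $\tilde n_2$ with $B_{\tilde 2}$ of rank $\geq 5$, and then Lemma~\ref{3null} (either item (1), or item (2) combined with the sphere-of-null-directions argument developed via Remark~\ref{importantremark} and the dimension count $\dim(S^5\cap {\mathcal C})\geq 3$) rules out $3$-nullity of the remaining basis vectors and forces the refined bound. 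For $j=2$ the same mechanism powered by item~(1) of Lemma~\ref{2null} and the irreducibility results of Appendix~I give one additional cut, and ruling out identical cuts ${\mathcal F}_j={\mathcal F}_2$ for $j\geq 3$ proceeds as in the $j=3$ argument by reducing to item~(2) of Lemma~\ref{2null}. The case $j=1$ uses Lemma~\ref{1null} together with independence of $p_{\tilde 2},p_{\tilde 3},p_{\tilde 4}$ (via Lemma~\ref{import1}) to supply two extra cuts. Finally $j=0$ runs along the lines of the end of Lemma~\ref{la}: Corollary~\ref{inde}(2) prevents too many triples of $(B_{\tilde l},C_{\tilde l})$ from vanishing, and the classical ``two ${\mathcal Q}_2$ must intersect'' dimension argument, now inside a $4$-dimensional ${\mathcal C}\subset{\mathcal L}_0$, supplies the missing cut.

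The main obstacle I anticipate is the case $j=3$ together with the intertwined case $j=2$. Here the bare estimate \eqref{est1} is not sufficient and item~(2) of Lemmas~\ref{2null} and \ref{3null} apply only under restrictive spectral data. The delicate piece is to exploit the Frenet-frame/tangent cone analysis in Remark~\ref{importantremark} to show that the closure of the irreducible component of $S^5\cap{\mathcal C}$ through $\tilde n_1$ is a sphere of dimension at least $3$, then pick a generic linear combination of $B_{\tilde 2},\dots,B_{\tilde 6}$ of rank $5$ to produce a second 3-quadric through $\lambda$ in the closure of ${\mathcal C}$, contradicting the uniqueness of such a quadric derived from items~(2) of Lemmas~\ref{2null}/\ref{3null}. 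Once this obstruction case is disposed of, all remaining strata fall into line and the codimension~$2$ estimate for $k=6$ is established, yielding the sought contradiction and proving the lemma. The statement for $C_1$ follows by the symmetric role of $B_a$ and $C_a$ throughout \eqref{conv}.
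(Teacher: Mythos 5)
Your proposal is correct and follows essentially the same route as the paper: the paper's proof of this lemma is precisely the contradiction argument you describe, resting on Lemma~\ref{la} and the subsequent $k=6$ stratified analysis over ${\mathcal L}_j$ (with Lemmas~\ref{1null}--\ref{3null}, \ref{sharper}, \ref{LA}, Corollary~\ref{inde}, Remark~\ref{importantremark}, and the Appendix~I cuts), concluding via \cite[Prop.~4]{Chiq} that otherwise the hypersurface would be the Ozeki--Takeuchi one. The only cosmetic difference is your closing remark about $C_1$; in the paper this is automatic since the normalization \eqref{BC} takes $B_1=C_1$.
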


\begin{proof} Away from points of Condition A, suppose there is a unit normal pair $(n_0,n_1)$ of $M_{+}$ for
which the $B_1$ of $S_{n_1}$ is of rank $\geq 5$. Extend $n_0,n_1$ to an orthonormal
basis $n_0,\cdots,n_7$. The analysis in Lemma~\ref{la} and what follows it shows that
the isoparametric hypersurface is the one constructed by Ozeki and Takeuchi, which is a contradiction.
We conclude that the rank of $B_1$ is $\leq 4$ for any choice of $n_0$.
\end{proof}

Note that by Corollary~\ref{COR}, a generic normal basis is respectively
4-null, 3-null, or 2-null if the generic rank is 4, 3, or 2.

We will in fact establish that the generic rank is 4 in the next section in Corollary~\ref{CA}.

\section{Mirror points}\label{mirror} Let $x_0\in M_{+}$ and let $n_0,n_a, a=1,\cdots,m_{+},$ be a normal basis of $M_{+}$ at $x_0$.
Set
$$
x_0^\#:=n_0,\quad n_0^\#:=x_0.
$$
Of fundamental importance is that $x_0^\#$ is also a point on $M_{+}$ with the normal space 
${\mathbb R}n_0^\#\oplus E_0,$ 
where $E_0$ is the 0-eigenspace of the shape operator $S_{n_0}$ at $x_0$, whose basis vectors are denoted by
$e_p,p=1,\cdots,m_{+}$.
The 0-eigenspace of the shape operator $S_{n_0^\#}$ at $x_0^\#$ is spanned by $n_a,a=1,\cdots,m_{+}$. Moreover, $S_{n_0}$ at $x_0$ and 
$S_{n_0^\#}$ at $x_0^\#$ share the same $(+1)-$ and $(-1)$-eigenspaces 
$E_{+}$ and $E_{-}$, whose basis vectors are denoted by $e_\alpha$ and $e_\mu,1\leq \alpha,\mu\leq m_{-}$, respectively.  

Referring to~\eqref{0a}, where 
\begin{equation}\label{gOOd}
A_a:=\begin{pmatrix}S^a_{\alpha\mu}\end{pmatrix},\quad B_a:=\begin{pmatrix}S^a_{\alpha p}\end{pmatrix},\quad C_a:=\begin{pmatrix}S^a_{\mu p}\end{pmatrix}.
\end{equation}
Let the counterpart matrices at $x_0^{\#}$ and their blocks
be denoted by the same notation with an additional \#.
Then, for $p=1,\cdots,m_{+},$

\begin{equation}\label{gooD}
A_p^\#:=\begin{pmatrix}S^p_{\alpha\mu}\end{pmatrix},\quad B_p^{\#}=\begin{pmatrix}S^a_{\alpha p}\end{pmatrix},
\quad C_p^{\#}=-\begin{pmatrix}S^a_{\mu p}\end{pmatrix}.
\end{equation}

We call $x_0^{\#}$ the ``mirror point'' of $x_0$ on $M_{+}$.

Similarly, set
\begin{equation}\label{*}
x_0^*:=(x_0+n_0)/\sqrt{2},\quad n_0^*:=(x_0-n_0)/\sqrt{2}.
\end{equation}
$x_0^*$ is a point on $M_{-}$ and $n_0^*$ is normal to $M_{-}$ at $x_0^*$. The normal 
space to $M_{-}$ at $x_0^*$ is
${\mathbb R}n_0^*\oplus E_{+}.$ 
Furthermore, the $(+1)$-eigenspace $E_{+}^*$
of the shape operator $S_{n_0^*}$ is spanned by $n_1,\cdots,n_{m_1}$, the $(-1)$-eigenspace $E_{-}^*$ of $S_{n_0^*}$ is $E_0$, and
the $0$-eigenspace $E_0^*$ of $S_{n_0^*}$ is $E_{-}$.

Referring to~\eqref{0a}, let the counterpart matrices at $x_0^*$ and their blocks
be denoted by the same notation with an additional *.
Then, for $\alpha=1,\cdots,m_{-},$

\begin{equation}\label{Good}
A_\alpha^*=-\sqrt{2}\begin{pmatrix}S^a_{\alpha p}\end{pmatrix},\quad
B_\alpha^*=-1/\sqrt{2}\begin{pmatrix}S^a_{\alpha\mu}\end{pmatrix},\quad
C_\alpha^*=-1/\sqrt{2}\begin{pmatrix}S^p_{\alpha\mu}\end{pmatrix}.
\end{equation}
(Likewise, there are counterpart matrices when we replace $\alpha$ by $\mu$ at the points $(x_0^*)^\#\in M_{-}$.)

We call $x_0^*$ the ``mirror point'' of $x_0$ on $M_{-}$. See~\cite[p. 144]{Chi},~\cite[p. 474]{Chiq4} for more details.

\begin{corollary}\label{CA} Notation as above, we may assume
\begin{eqnarray}\label{good}
\aligned
&A_\alpha^*=\begin{pmatrix} 0&0\\0&\cdot\end{pmatrix},\quad B_\alpha^*=\begin{pmatrix} \cdot&0\\0&\cdot\end{pmatrix},\quad
C_\alpha^*=\begin{pmatrix} \cdot&0\\0&\cdot\end{pmatrix},\quad 1\leq \alpha\leq 4;\\
&A_\alpha^*=\begin{pmatrix} 0&\cdot\\\cdot&\cdot\end{pmatrix},\quad B_\alpha^*=\begin{pmatrix} 0&\cdot\\\cdot&\cdot\end{pmatrix},\quad
C_\alpha^*=\begin{pmatrix} 0&\cdot\\\cdot&\cdot\end{pmatrix}, \quad 5\leq\alpha\leq 8.
\endaligned
\end{eqnarray}
In particular, Lemma~$\ref{r}$ can be improved to $4$-nullity.
\end{corollary}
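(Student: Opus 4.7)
The plan is to strengthen Lemma~\ref{r} from ``rank $\leq 4$'' to exact generic $4$-nullity at $x_0 \in M_{+}$, then transport the resulting zero pattern to $x_0^* \in M_{-}$ through the mirror point $x_0^{\#}$. Lemma~\ref{r} together with Corollary~\ref{COR} gives $r := \sup_{\lambda \in \mathcal{Q}_6} r_{\lambda} \leq 4$ and uniform $r$-nullity at every $\lambda$. I rule out $r \leq 3$ case by case. The case $r = 0$ is Condition A, which is excluded by hypothesis. The case $r = 1$ would force every $\tilde{n}_l$, $l \geq 2$, to be $1$-null at every $\lambda$, contradicting Lemma~\ref{1null} outright. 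For $r = 3$, Lemma~\ref{LA} supplies at a generic point a normal direction in which $B$ has rank $\geq 5$; Lemma~\ref{3null}(1) then forces the spectral data to be $(\sigma, \Delta) = (I/\sqrt{2}, 0)$, and Lemma~\ref{3null}(2) combined with the codimension-$2$ machinery of Section~\ref{4n} (argued as in the $j = 3$ analysis there) contradicts the non-OT assumption. An analogous argument using Lemma~\ref{2null}(1)--(2) and Lemma~\ref{LA} disposes of $r = 2$. Thus $r = 4$ and Corollary~\ref{COR} yields generic $4$-nullity, establishing the ``Lemma~\ref{r} improved to $4$-nullity'' clause.

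Next, $x_0^{\#} \in M_{+}$ is still a generic non-Condition-A point, so the same argument applies there: choosing a suitable unit vector in $E_0$ to play the role of the first normal direction after $n_0^{\#} = x_0$, I obtain $B_1^{\#}, C_1^{\#}$ normalized with rank-$4$ $\sigma^{\#}$, and $B_p^{\#}, C_p^{\#}$ for $p \geq 2$ having vanishing upper-left $4 \times 3$ block. A small genericity argument allows the initial bases at $x_0$ and $x_0^{\#}$ to be chosen so that the $3 + 4$ splitting of $E_0$ (needed at $x_0^{\#}$) and the $3 + 4$ splittings of $E_{\pm}$ (needed at $x_0$) are compatible. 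At this stage the third-order entries $S^a_{\alpha p}$ vanish for $\alpha \leq 4$ whenever $a \leq 3$ or $p \leq 3$, from the two $4$-nullity hypotheses together with the normalized zeros in $B_1$ and $B_1^{\#}$. Feeding these zeros through the mirror conversions~\eqref{gOOd}--\eqref{Good}, which makes the $(a,p)$-entry of $A_{\alpha}^*$ equal to $-\sqrt{2}\, S^a_{\alpha p}$, the $A_{\alpha}^*$ pattern for $\alpha \leq 4$ in~\eqref{good} drops out at once. The analogous patterns for $B_{\alpha}^*$ and $C_{\alpha}^*$ come by reading off the same zero information from $A_a$ and $A_p^{\#}$ via the identities~\eqref{b},~\eqref{d},~\eqref{e},~\eqref{hs},~\eqref{del} from the proof of Lemma~\ref{rnull}, which express the $\beta, \gamma, \delta$ blocks of $A_a$ in terms of the $d, g$ and $c - f$ blocks of $B_a, C_a$.

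The hard part will be the $\alpha \geq 5$ half of~\eqref{good}: direct $4$-nullity only kills the entries $S^1_{\alpha p}$ for $p \leq 3$ and $S^a_{\alpha 1}$ for $a \leq 3$, leaving a residual $2 \times 2$ sub-block indexed by $(a, p) \in \{2, 3\} \times \{2, 3\}$ that is not immediately zero. I expect to eliminate this residual block by a further orthogonal change inside the first three normal vectors $\tilde{n}_1, \tilde{n}_2, \tilde{n}_3$ at $x_0^{\#}$ that preserves the already-achieved zeros; the remaining freedom is precisely a rotation of the $3 \times 3$ upper-left corner of the tensor $S^a_{\alpha p}$, and the Ozeki--Takeuchi identities in~\eqref{conv} applied in that corner should supply enough linear constraints to kill it. A parallel reduction then handles the corresponding residual entries in $B_{\alpha}^*$ and $C_{\alpha}^*$ through the $A_a$ and $A_p^{\#}$ identities.
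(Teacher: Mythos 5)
There are two genuine gaps. First, your elimination of $r=2,3$ cannot run on the lemmas you cite: Lemma~\ref{2null}(1), Lemma~\ref{3null}, and Lemma~\ref{LA} all require (explicitly, or through the standing assumption $\sup_\lambda r_\lambda\ge 5$ under which Lemma~\ref{LA} is proved and used in Section~\ref{4n}) that some normal direction has $B$-block of rank at least $5$; but once Lemma~\ref{r} holds, no such direction exists away from Condition A, so these hypotheses are unsatisfiable and the lemmas are vacuous in the regime $r\le 4$ --- that codimension-$2$ machinery was already spent in proving Lemma~\ref{r} and cannot be rerun to rule out $r=2,3$. (Your $r=0,1$ cases are fine.) The paper's actual route is different and short: generic $r$-nullity at the mirror point $x^{\#}$ converts via~\eqref{gOOd}--\eqref{gooD} into the vanishing of the upper $(8-r)$-by-$(7-r)$ blocks of all $B_a,C_a$ with $a\le 7-r$ at $x$, and then the computation from Corollary~\ref{inde} forces the upper-left blocks $z_a$ of $A_a$, $2\le a\le 7-r$, to be skew-symmetric, orthogonal and anticommuting, i.e.\ to give ${\mathbb R}^{8-r}$ a Clifford $C_{6-r}$-module structure; a dimension count then excludes $r=1,2,3$ and leaves $r=4$. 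Some argument of this kind is what you are missing.

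Second, for the $\alpha\ge 5$ half of~\eqref{good} you correctly isolate the residual block (modulo a frame-compatibility point you wave at: the normalizations of $B_1$ at $x$ and of $B_1^{\#}$ at $x^{\#}$ act on the same spaces $E_{\pm}$, so "choose compatible bases" needs justification), but "the identities~\eqref{conv} in that corner should supply enough linear constraints" is an expectation, not a proof, and in the paper this step is the bulk of the work: one shows $d_j\sigma^{-1}\beta=0$ by a rank argument on $uB_1+vB_2+wB_j$ and the Taylor expansion~\eqref{taylor}--\eqref{dsig}, deduces that the column space of $\sigma^{-1}\beta$ lies in $\bigcap_j\ker(d_j)$, bounds that intersection by $l\le 1$ with another Clifford-module count, and then eliminates $l=1$ through a delicate analysis using the explicit quaternionic $z_1,z_2,z_3$, the column structure~\eqref{identical}--\eqref{only} of the $k_\alpha$, and a case division on the spectral matrix $\sqrt{D}$. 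Note also that even the paper does not finish the $\alpha\le 4$ pattern of $B^*_\alpha,C^*_\alpha$ within this corollary: their lower-left blocks are only killed later, in Corollary~\ref{qJ}, using the Ozeki--Takeuchi third-fundamental-form identity, not the purely second-order identities~\eqref{b}--\eqref{del} you appeal to. As written, your proposal omits the core of the proof.
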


\begin{proof} By Lemma~\ref{r}, a generic choice of $x$ and $x^\#$ can only be $r$-null for $1\leq r\leq 4$, so that the upper left $(8-r)$-by-$(7-r)$ 
block of $B_p^\#$ and $C_p^\#$ are zero for $1\leq p\leq 7$. That is,
\begin{equation}\label{SA}
S^a_{\alpha p}=S^a_{\mu p}=0, \quad 1\leq\alpha,\mu\leq 8-r,1\leq a\leq 7-r,\forall p=1,\cdots,7.
\end{equation}
In other words,
\begin{equation}\label{Ba}
B_a=\begin{pmatrix}0&0\\\beta_a&\gamma_a\end{pmatrix},\quad 1\leq a\leq 7-r,
\end{equation}
where the columns are indexed by $p$ and the upper left block is of size $(8-r)$-by-$(7-r)$. (Likewise, $C_a$ are of the same form.)

We normalize $A_1$ and $B_1$ as in~\eqref{BC} and~\eqref{A}. 
The proof of Corollary~\ref{inde} implies that 

\begin{equation}\label{Aa}
A_a=\begin{pmatrix} z_a&0\\0&\cdot\end{pmatrix},\quad 2\leq a\leq 7-r,
\end{equation}
where the upper left block is of size $(8-r)$-by-$(8-r)$ with 

\begin{equation}\label{pa}
z_a=-z_a^{tr},\quad z_az_b+z_bz_a=-2\delta_{ab}I,\quad 2\leq a,b\leq 7-r.
\end{equation} 
That is, we have a Clifford $C_{6-r}$-module ${\mathbb R}^{8-r}$ for $1\leq r\leq 4$; this is possible only when $r=4$. In particular, generic points
of $M_{+}$ are 4-null.  

With $r=4$ in place, note that, by~\eqref{gOOd} and~\eqref{Good},~\eqref{Aa} is equivalent to

$$
B_\alpha^*=\begin{pmatrix} h_\alpha&0\\k_\alpha&\cdot\end{pmatrix},\;\;\alpha\leq 4; \quad B_\alpha^*=\begin{pmatrix} 0&\cdot\\\cdot&\cdot\end{pmatrix},\;\;5\leq\alpha\leq 8
$$
for some $h_\alpha,k_\alpha$. Now the 4-nullity at $x$ is 

\begin{equation}\label{ba}
B_a=\begin{pmatrix} 0&\cdot\\\cdot&\cdot\end{pmatrix}, \quad \forall a=1,\cdots,7.
\end{equation}
That is,

\begin{equation}\label{Sa}
S^a_{\alpha p}=0,\quad 1\leq\alpha\leq 4,\quad 1\leq p\leq 3,\quad \forall a=1,\cdots,7.
\end{equation}
Putting~\eqref{SA} and~\eqref{Sa} together, we obtain
$$
A_\alpha^*=\begin{pmatrix} 0&0\\0&\cdot\end{pmatrix},\quad 1\leq \alpha\leq 4.
$$


That the upper left corner of $A^*_\alpha$ is zero for $\alpha\geq 5$ is equivalent to that the lower left block of $B_a$ in~\eqref{Ba} 
is zero for $1\leq a\leq 3.$ 
To show the latter, item (1) of Corollary~\ref{inde} implies that there is a matrix $B_j$, for some $j\geq 4,$ of the form

\begin{equation}\label{bj}
B_j=\begin{pmatrix}0&d\\b&c\end{pmatrix},\quad d_{4\times 4}\neq 0.
\end{equation}
Consider 
$$
E:=uB_1+vB_2+wB_j=\begin{pmatrix}0&wd\\v\beta+wb&\sigma+v\gamma+wc\end{pmatrix},\quad u^2+v^2+w^2=1,
$$
where we suppress the index 2 for $B_2$ in~\eqref{Ba}. $E$ is of rank at most 4, and is of rank 4 for $u$ close to 1, so that the equation
$$
\begin{pmatrix}0&wd\\v\beta+wb&u\sigma+v\gamma+wc\end{pmatrix}\begin{pmatrix}x\\y\end{pmatrix}=0,
$$
is of dimension 3 for $u$ close to 1. This amounts to 
$$
wdy=0,\quad (v\beta+wb)x+(u\sigma+v\gamma+wc)y=0.
$$
Since $\sigma+v\gamma+wc$ is invertible for $u$ close to 1, we can solve $y$ in terms the 3-dimensional $x$ and insert it into $dy=0$ 
(for small $w\neq 0$) to yield
$$
d(u\sigma+v\gamma+wc)^{-1}(v\beta+wb)=0,
$$
whose Taylor expansion reads

\begin{equation}\label{taylor}
d(I-(v'\sigma^{-1}\gamma+w'\sigma^{-1}c)+(v\sigma^{-1}\gamma+w\sigma^{-1}c)^2-\cdots)\sigma^{-1}(v'\beta+w'b)=0,
\end{equation}
where $v'=v/u$ and $w'=w/u$, from which we can extract

\begin{equation}\label{dsig}
d\sigma^{-1}\beta=0.
\end{equation}
That is, the column space of $\sigma^{-1}\beta$ is in the kernel of $d$. We thus conclude that

\begin{equation}\label{col}
\text{the column space of}\; \sigma^{-1}\beta\subset\cap_{j=i}^7\;\text{kernel}(d_j),  
\end{equation}
where 
$$
B_j:=\begin{pmatrix} 0&d_j\\b_j&c_j\end{pmatrix}.
$$

We claim that $\cap_{j=i}^7\;\text{kernel}(d_j)$ is of dimension at most 1. To this end, suppose the intersection is of dimension $l$. Reparametrizing, we 
may assume the first $l$ columns of $d_j$ are zero for all $j=1,\cdots,7$, which amounts to 
$$
S^a_{\alpha p}=0,\quad 1\leq\alpha\leq 4,\quad 4\leq p\leq 3+l,\quad \forall a=1,\cdots, 7.
$$ 
This is equivalent to 
$$
B_p^\#=\begin{pmatrix} 0&0\\\cdot&\cdot\end{pmatrix},\quad p=4,\cdots,3+l,
$$ 
where the 0 rows are of size 4-by-7. On the other hand,~\eqref{ba} is equivalent to
$$
B_p^\#=\begin{pmatrix} 0&0\\\cdot&\cdot\end{pmatrix},\quad p=1,2,3.
$$
Therefore, normalizing $B_1^\#$ as in~\eqref{BC}, we have that the top four rows of $B_j,2\leq j\leq 3+l,$ are zero. But then Corollary~\ref{inde} implies that
$l\leq 1$, because only Clifford $C_2$, when $l=0$, and Clifford $C_3$, when $l=1$, can act on ${\mathbb R}^4$. This proves the claim.
 
When $l=0$, we have $\beta=0$ by~\eqref{col}, i.e., the lower left block of $B_a$ in~\eqref{Ba} 
is zero for $1\leq a\leq 3.$ 

We can thus assume that generically $l=1$ over $M_{+}$. This is equivalent to saying, by considering generic $x$ and $x^\#$, that
there is an index $a\geq 4$, say, $a=4$, and an index $p\geq 4$, say, $p=4$, such that 
$$
S^{a=4}_{\alpha\, p}=S^a_{\alpha\, p=4}=0,\quad 1\leq \alpha\leq 4,\quad \forall a, p=1,\cdots,7.
$$   
That is, for each $\alpha\leq 4,$ the first four columns and rows of the 7-by-7 matrix $A_\alpha^*$ in~\eqref{good} are zero, i.e.,

\begin{equation}\label{aa}
A_\alpha^*=\begin{pmatrix}0&0\\0&\delta_\alpha\end{pmatrix}, \quad 1\leq\alpha\leq 4,
\end{equation}
where $\delta_\alpha$ is of size 3-by-3. 

Note that in~\eqref{pa} we may assume
that $z_1, z_2$ and $z_3$ are respectively the matrix representation of the quaternionic multiplication of the basis elements 
${\bf i},{\bf j}$ and ${\bf k}$ on the left of ${\mathbb H}$; in doing so we do not assume $z_1=I$ so that the representation will be notationally more consistent, and it will not 
affect the subsequent arguments. Accordingly ,we have
$$
z_1=\begin{pmatrix}0&-1&0&0\\1&0&0&0\\0&0&0&-1\\0&0&1&0\end{pmatrix},\quad z_2=\begin{pmatrix}0&0&-1&0\\0&0&0&1\\1&0&0&0\\0&-1&0&0\end{pmatrix},
\quad z_3=\begin{pmatrix}0&0&0&-1\\0&0&-1&0\\0&1&0&0\\1&0&0&0\end{pmatrix},
$$
according to which
\begin{eqnarray}\nonumber 
\aligned
&h_1=\begin{pmatrix}0&-1&0&0\\0&0&-1&0\\0&0&0&-1\end{pmatrix}/\sqrt{2},\quad h_2=\begin{pmatrix}1&0&0&0\\0&0&0&1\\0&0&-1&0\end{pmatrix}/\sqrt{2},\\
&h_3=\begin{pmatrix}0&0&0&-1\\1&0&0&0\\0&1&0&0\end{pmatrix}/\sqrt{2},\quad h_4=\begin{pmatrix}0&0&1&0\\0&-1&0&0\\1&0&0&0\end{pmatrix}/\sqrt{2}.
\endaligned
\end{eqnarray}

Moreover, we have (in $B_\alpha^*$)

\begin{equation}\nonumber
h_\alpha k_\alpha^{tr}=0, \quad h_\alpha h_\alpha^{tr}=I/2,
\quad \alpha\leq 4,
\end{equation}
by the first identity of~\eqref{conv} when we set $i=j=\alpha$, 
where $h_\alpha$ is of size $3$-by-$4$ and $k_\alpha$ is of size $4$-by-$4$, 
from which we read off that the only possibly nonzero column of $k_\alpha$ is the $\alpha$th one, i.e.,
$$
k_\alpha =\begin{pmatrix} \epsilon^\alpha_{jk}\delta_{k\alpha}\end{pmatrix},\quad 1\leq \alpha,j,k\leq 4.
$$ 
Now the the first identity of~\eqref{conv} applied to $1\leq\alpha\neq\beta\leq 4$ gives
$$
h_\alpha k_\beta^{tr}+h_\beta k_\alpha^{tr}=0,
$$
which implies the four possibly nonzero columns are all identical, i.e,

\begin{equation}\label{identical}
\epsilon^1_{j1}=\epsilon^2_{j2}=\epsilon^3_{j3}=\epsilon^4_{j4}, \quad 1\leq j\leq 4.
\end{equation}
By performing a coordinate change on the $a$-indexes, $4\leq a\leq 7$, indexing the rows of $B_a^*$, we may assume only the first components of these four columns are possibly nonzero, i.e.,

\begin{equation}\label{only}
\epsilon^1_{j1}=\epsilon^2_{j2}=\epsilon^3_{j3}=\epsilon^4_{j4}=0, \quad 2\leq j\leq 4.
\end{equation}
The same holds for $C_\alpha^*,\alpha\leq 4,$ as well by changing the $p$-indexes, $4\leq p\leq 8$. In fact, the sixth identity of~\eqref{conv} 
implies that we may further assume
the nonzero entries of these columns for both $B_\alpha^*$ and $C_\alpha^*$ are identical. 

Now, by the first identity of~\eqref{conv} with $i=j=\alpha\leq 4$, we derive
\begin{equation}\label{Balpha}
B_\alpha^*(B_\alpha^*)^{tr}= \begin{pmatrix}I/2&0\\0&D_\alpha\end{pmatrix},\quad D_\alpha=\begin{pmatrix} 1/2&0\\0&e_\alpha\end{pmatrix},
\end{equation}
where $e_\alpha$ is of size 3-by-3, in light of~\eqref{aa}. Thus we may rearrange indexes 
(see~\cite[Lemma 49, p. 64]{CCJ}) to assume 

\begin{equation}\label{ABC}
A_\alpha^*=\begin{pmatrix}0&0\\0&\delta_\alpha\end{pmatrix},\quad B_1^*=C_1^*=\begin{pmatrix}0&I/\sqrt{2}&0\\0&0&\sqrt{D}\end{pmatrix}, \quad \alpha\leq 4,
\end{equation}
where $\sqrt{D}$ is diagonal of the form 

\begin{equation}\label{D}
\sqrt{D}=\text{diag}(1/\sqrt{2},1/\sqrt{2},b,b),
\end{equation}
given the spectral data $(\sigma,\Delta)$ since $\delta_\alpha$ is of size 3-by-3, where $I$ is 3-by-3.

Suppose $\sqrt{D}$ is nonsingular. $\delta_1$ is skew-symmetric as it is part of $\Delta$. But then because
nonsingularity of $D$ is a generic condition, it follows that each linear combination of $\delta_\alpha,\alpha\leq 4,$ is
skew-symmetric of size 3-by-3 when suitably normalized, 
which implies 
that generic linear combinations of $\delta_\alpha$ are of rank 2, from which we see, for a generic point $c:=(c_1,\cdots,c_4) \in S^3$,
$$
\delta_c:=c_1\delta_1+\cdots+c_4\delta_4,
$$ 
that there is a unique $c'$ on $S^2$ which is the eigen direction of $\delta_c$ with eigenvalue 0.

Without loss of generality,
let us assume the map 
$$
F: S^3\rightarrow S^2,\quad c\rightarrow c'
$$ 
is surjective (more precisely, the domain and target spaces of  $F$ are projective spaces, though this does not create a problem); if $F$ is not surjective the preimage will be of even larger dimension to our advantage. Then the closure ${\mathcal C}$ of the preimage $F^{-1}(c')$ is a 1-dimensional circle,
because 
for $c\in F^{-1}(c')$, 
the plane perpendicular to $c'$, which is an eigenspace of $\delta_c(\delta_c)^{tr}$, is fixed, from 
which we conclude that there is a unique point $c_0$ on ${\mathcal C}$ for which $\delta_{c_0}=0$, because the spectral data stipulate that all $\delta_c$
for $c\in F^{-1}(c')$ be of the same form as $\delta_4$ below.
This means that we have an $S^2$ worth of $\delta_{c_0}$, one for each $c'$, which are
identically zero, so that we may assume 
$$
\delta_1=\delta_2=\delta_3=0, \quad \delta_4=\begin{pmatrix}0&0&0\\0&0&\tau\\0&-\tau&0\end{pmatrix}
$$
for some $0<\tau\leq 1/\sqrt{2}$. But then this implies that the first five columns and rows of $A_\alpha,\alpha\leq 4,$ are zero, which contradicts $l\leq 1$.

On the other hand, suppose generic $D_\alpha$ is singular, then
$$
D_\alpha\sim\text{diag}(1/2,1/2,1/2,0),\quad \text{or}\;\; \text{diag}(1/2,1/2,0,0).
$$
If it is the former case, then $\delta_c$ has a 2-dimensional eigenspace with eigenvalue 0. 
Let us denote by $c'$ the direction that is perpendicular
to the 2-dimensional 0-eigenspace of $\delta_c$;
the spectral data stipulate that $\delta_c$ be of the form
$$
\delta_c=\begin{pmatrix}0&0&0\\0&0&0\\0&0&x\end{pmatrix},\quad x^2=1.
$$
We are done by the same reasoning as in the nonsingular case.
If it is the latter, then $e_c$, whose components are given in the second matrix in~\eqref{Balpha}, serves the role of $\delta_c$ in the former case,
from which we conclude that there are $e_c=0$, contradicting the given nonzero spectral data.

In conclusion, $l=0$ generically. That is, 
the lower left block of $B_a=0$ in~\eqref{Ba} for $1\leq a\leq 3$, or equivalently, the upper left corner of $A_\alpha^*=0$ for $\alpha\geq 5$, for a generic choice of $x$ and $x^\#$. 

We will show in Corollary~\ref{qJ} below that the lower left blocks of $B^*_\alpha$ (and $C^*_\alpha$), $\alpha\leq 4,$ are zero.

\end{proof}

\begin{remark}\label{rkk} Intrinsically, in the preceding corollary, let $N^*\simeq{\mathbb H}\subset E_{+}$ be the kernel of $B_1^{tr}$, let
$V_0^*\simeq{\mathbb H}\subset E_{-}^*$ be the kernel of $C_1^{tr}$, let $V_{-}^*\simeq\; \text{Im}({\mathbb H})\subset E_{-}^*$ be the kernel of $B_1$, 
and let 
$V_{+}^*\simeq\; \text{Im}({\mathbb H})\subset E_{+}^*$
be the kernel of $B_1^\#$. Then these four spaces parametrize the upper left blocks of the matrices in the corollary, where $N^*$ is parametrized by $1\leq \alpha\leq 4$,
$V_0^*$ by $1\leq \mu\leq 4$, $V_{+}^*$ by $1\leq a\leq 3$, and $V_{-}^*$ by $1\leq p\leq 3$. 

\end{remark}

\begin{corollary} Notation as in the preceding remark, let 

\begin{equation}\label{V}
V:=V_{+}^*\oplus V_{-}^*\oplus V_0^*\subset E_{+}^*\oplus E_{-}^*\oplus E_0^*:=E.
\end{equation}
Let $p_j^*|_V$ and $q_j^*|_V,0\leq j\leq m_{-}=8$ be the components of the second and third fundamental forms of $M_{-}$ at $x^*$ evaluated on $V$, where the indexes 
$1\leq j\leq 4$ range through $N^*$, and as always $j=0$ indexes the components corresponding to $n_0^*$. Then we have

\begin{eqnarray}\label{3}
\aligned
&p_j^*|_V=0,\quad j\geq 5,\\
&q_j^*|_V=0,\quad 0\leq j\leq 4.
\endaligned
\end{eqnarray}
\end{corollary}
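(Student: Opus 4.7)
My plan splits the corollary into its two assertions; the first is essentially an unpacking of Corollary~\ref{CA}, while the second requires additional work. For the first, $p_j^*|_V=0$ when $j\geq 5$, I would decompose $X=X_++X_-+X_0$ and $Y=Y_++Y_-+Y_0$ in $V_+^*\oplus V_-^*\oplus V_0^*$ and expand
\[
p_j^*(X,Y)=\langle A_j^*X_-,Y_+\rangle+\langle A_j^*Y_-,X_+\rangle+\langle B_j^*X_0,Y_+\rangle+\langle B_j^*Y_0,X_+\rangle+\langle C_j^*X_0,Y_-\rangle+\langle C_j^*Y_0,X_-\rangle.
\]
Under the identification of $V_+^*,V_-^*$ with the upper $3$ rows of $E_+^*,E_-^*$ and of $V_0^*$ with the left $4$ columns of $E_0^*$ in the block decomposition of Corollary~\ref{CA}, each inner product reads off an entry of the upper-left block of $A_j^*,B_j^*$, or $C_j^*$. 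Since all these upper-left blocks vanish for $j\geq 5$ by Corollary~\ref{CA}, the conclusion $p_j^*|_V=0$ follows.

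For the second assertion $q_j^*|_V=0$ with $0\leq j\leq 4$, my plan is to invoke the two Ozeki--Takeuchi identities that were not listed in~\eqref{conv} (see~\cite[I, p.\,530]{OT}); these relate the components $q^j$ of the third fundamental form to cubic expressions in the blocks $A_a^*,B_a^*,C_a^*$. Substituting the block structure from Corollary~\ref{CA} and restricting to triples in $V$, Part~1 annihilates every summand indexed by $a\geq 5$, and one then verifies that the surviving terms combine to give $q_j^*|_V=0$ for $0\leq j\leq 4$. An equivalent and perhaps more transparent route uses the Ozeki--Takeuchi expansion $F(tx^*+y+w)$ for $y\in V$: Part~1 concentrates the $t$-coefficient $8\sum_j p_j^*(y,y)w_j$ on indices $j\leq 4$, and matching the resulting polynomial against the Cartan--M\"unzner equation $|\nabla F|^2=16|x|^6$ (via the $q^j$ term $-8\sum_j q^j(y,y,y)w_j$ paired through cross-differentiation with the $p_j$ term) should pin down $q_j^*(y,y,y)=0$ for $0\leq j\leq 4$.

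The main obstacle lies in Part~2: the relevant Ozeki--Takeuchi identities are cubic and convoluted, and isolating precisely which monomial in the expansion of $|\nabla F|^2-16|x|^6$ detects $q_j^*|_V$ requires careful bookkeeping. A cleaner variant that may bypass some of the combinatorics exploits a structural consequence of Corollary~\ref{CA}: a direct calculation with the displayed block forms shows that $S_\alpha^*$ preserves $V$ for $0\leq\alpha\leq 4$ and sends $V$ into $V^\perp$ for $\alpha\geq 5$. Combined with the Codazzi symmetry of $\nabla^\perp S$ and Part~1, this invariance is expected to force the normal-bundle-valued form $(X,Y,Z)\mapsto q(X,Y,Z)$ on $V\otimes V\otimes V$ to take values in $\mathrm{span}(n_5^*,\ldots,n_{m_-}^*)$, which is exactly the vanishing $q_j^*|_V=0$ for $0\leq j\leq 4$.
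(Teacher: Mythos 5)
Your Part 1 is fine and is exactly the paper's argument: $p_j^*|_V=0$ for $j\geq 5$ is just the statement that the upper-left blocks of $A_j^*,B_j^*,C_j^*$ in~\eqref{good} vanish, read off entrywise on $V$.

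Part 2, however, has a genuine gap: none of your three routes is carried out, and each fails for a concrete reason. The Ozeki--Takeuchi identities do not express the $q^j$ in terms of the second fundamental form; what they yield on restriction to $V$ are aggregate quantities such as $\sum_{a}p_a^*q_a^*$ or $\sum_a (q_a^*)^2$, which treat the indices $0\leq a\leq 8$ symmetrically and therefore cannot isolate the components with $j\leq 4$. This is not a technicality: the paper shows later (proof of Corollary~\ref{qJ}) that at this stage one may have $\sum_{a\geq 5}(q_a^*|_V)^2=|xy-yx|^2|z|^2\neq 0$ (the case~\eqref{q*}), so any bookkeeping of $|\nabla F|^2-16|x|^6$ that does not distinguish $j\leq 4$ from $j\geq 5$ cannot prove the claim. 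Your ``cleaner variant'' is also problematic: the asserted invariance $S_\alpha^*(V)\subset V$ for $\alpha\leq 4$ uses the vanishing of the lower-left blocks of $B_\alpha^*,C_\alpha^*$, which is only established in Corollary~\ref{qJ} --- whose proof in turn uses the present corollary (indeed the possibly nonzero entry there is exactly the $c$ appearing in~\eqref{16}) --- so this route is circular as stated; and even granting the invariance, a pointwise statement about the shape operators plus Codazzi symmetry says nothing about the first-derivative object $q=(\nabla^\perp S)/3$ unless you know the block structure holds on a neighborhood and you control how the subspaces $V_\pm^*,V_0^*$ rotate, i.e.\ the connection forms.

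That last point is precisely the paper's mechanism, which your proposal misses. For $1\leq b\leq 4$ the components $q_b^*|_V$ are the covariant-derivative components $(S^*)^b_{ij;k}$ computed from the structure equation~\eqref{cov}: the terms $d(S^*)^b_{ij}$ vanish because the corresponding entries of~\eqref{good} vanish identically near a generic point, and the connection-form terms vanish by the block structure together with the explicit formula for $\theta^i_j$ from~\cite[(4.18), p.~14]{CCJ}. The case $j=0$, which your sketch does not treat separately at all, requires a different argument: $q_0^*|_V$ is rewritten via the mirror point $(x^*)^\#$ on $M_-$ as $\sum_{p\leq 4,\,i,j\leq 3}(S^*)^p_{ij}z_px_iy_j$ (see~\eqref{q*0}), which vanishes by the $\mu$-analogue of the vanishing upper-left block in~\eqref{good}. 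To repair your proof you would need to supply these two differential-geometric computations (or a genuine substitute), not the algebraic identities.
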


\begin{proof} The first identity follows from the vanishing of the upper left blocks of the last three matrices in the statement of Corollary~\ref{CA}. 

The second follows from the normal covariant derivative of the second fundamental form $S^*$ at $x^*\in M_{-}$

\begin{equation}\label{cov}
\sum_k(S^*)^b_{ij;k}\,\omega^k=d(S^*)^b_{ij}-\sum_t(S^*)^b_{tj}\,\theta^t_i-\sum_t(S^*)^b_{it}\,\theta^t_j,
\end{equation}
where $(S^*)^b_{ij;k}$ are the components of $q_b^*$, we assume the normal connection is zero at the point of calculation, and $\omega^j$ and $\theta^j_i$ are the coframe and connection forms. 

We indicate 
one calculation for illustration. Let indexes $i,j\leq 3$ and $k\leq 4$ denote respectively those for $E_{+}^*,E_{-}^*$ and $E_0^*$. Then 
for $1\leq b\leq 4$, the right hand side of ~\eqref{cov}
is zero by the vanishing blocks of the first matrix in~\eqref{good}, knowing that $(S^*)^b_{uv}=0$ whenever
$u$ and $v$ index the same eigenspace and that $\theta^k_i$ and $\theta^k_j$ vanish on $E_0^*$
(see~\cite[(4.18), p. 14]{CCJ} for how to calculate $\theta^j_i$ in general).

On the other hand, the cubic polynomial
\begin{equation}\label{q*0}
q_0^*|_V=\sum_{p\leq 4, i,j\leq 3} (S^*)^p_{ij}\, z_p\, x_i\, y_j=0,
\end{equation}
where $p$ indexes the corresponding normal directions at $(x^*)^\#$, the mirror point of $x^*$ on $M_{-}$, and $i,j\leq 3$ index the $E_{+}^*$ and $E_{-}^*$, 
respectively. The vanishing of the identity
follows from that of the upper left block of the first matrix in~\eqref{good} when we replace $\alpha$ by $\mu$.
\end{proof} 
 
\begin{corollary} Let ${\bf 1},{\bf i},{\bf j},{\bf k}$ be the standard basis in ${\mathbb H}$. Write 
$$
v=x\oplus y\oplus z
$$ 
respecting the direct sum of $V$ in~\eqref{V}, and write
$$
p^*:= p_1^*|_V \,{\bf 1}+p_2^*|_V\, {\bf i}+p_3^*|_V\, {\bf j}+p_4^*|_V\, {\bf k}.
$$ 
Then
\begin{equation}\label{p}
p^*(v,v)=-\sqrt{2}(xz+y\circ z),
\end{equation}
where $y\circ z=yz$ or $zy$ (quaternion multiplication).

\end{corollary}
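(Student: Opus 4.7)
The plan is to unpack the block structure from Corollary~\ref{CA} and the explicit matrices $h_1,\dots,h_4$ from its proof, then recognize the resulting $\mathbb H$-valued pairing as a constant multiple of quaternion multiplication.

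First, for $v=x+y+z\in V$ with $x\in V_+^*$, $y\in V_-^*$, $z\in V_0^*$ and $1\le\alpha\le 4$, expand
\begin{equation*}
p_\alpha^*(v,v)=2\,x^{tr}A_\alpha^* y+2\,x^{tr}B_\alpha^* z+2\,y^{tr}C_\alpha^* z.
\end{equation*}
The first term vanishes because the upper-left block of $A_\alpha^*$ is $0$ by the first line of~\eqref{good}. The remaining terms restrict to $2x^{tr}h_\alpha z+2y^{tr}h_\alpha z$, since by Corollary~\ref{CA} (and the normalization~\eqref{ABC}, together with the sixth identity of~\eqref{conv}) the upper-left $3{\times}4$ blocks of $B_\alpha^*$ and $C_\alpha^*$ coincide with the common matrix $h_\alpha$ displayed there.

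Second, use the identifications of Remark~\ref{rkk}: $V_+^*\simeq\operatorname{Im}(\mathbb H)$, $V_-^*\simeq\operatorname{Im}(\mathbb H)$, $V_0^*\simeq\mathbb H$ via the orthonormal bases fixed by~\eqref{BC},\eqref{A},\eqref{ABC}, chosen compatibly with the quaternionic forms of $z_1,z_2,z_3$ in the proof of Corollary~\ref{CA}. A direct check on the explicit $h_1,\dots,h_4$ there shows
\begin{equation*}
h_\alpha z=\tfrac{1}{\sqrt 2}\,\operatorname{Im}(z\,\tilde e_\alpha),\qquad 1\le\alpha\le 4,
\end{equation*}
for a suitable basis $(\tilde e_1,\dots,\tilde e_4)$ of $\mathbb H$ (with $\tilde e_1=\pm\mathbf 1$ reflecting the identification of the real line in $\mathbb H$). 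Combined with $\langle x,\operatorname{Im}(w)\rangle=\langle x,w\rangle$ for $x\in\operatorname{Im}(\mathbb H)$ and the identity $\langle x,ze_\alpha\rangle=\mathrm{coeff}_{e_\alpha}(xz)$ (which follows from $\langle a,b\rangle=\operatorname{Re}(a\bar b)$ and $\bar x=-x$), one assembles
\begin{equation*}
\sum_{\alpha=1}^4 2\,x^{tr}h_\alpha z\,e_\alpha=-\sqrt 2\,xz.
\end{equation*}

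Third, repeat the same quaternionic reading for the $y$-contribution. Because the identification of $V_-^*$ with $\operatorname{Im}(\mathbb H)$ is a priori independent of the one on $V_+^*$, and the two choices realize the two inequivalent quaternionic actions (left versus right multiplication, paralleling the two octonionic multiplications mentioned in the introduction and in~\cite{Chi}), the $y$-term produces $-\sqrt 2\,(y\circ z)$ where $y\circ z$ is $yz$ or $zy$ according to which of these two identifications is in force. Adding the two contributions yields~\eqref{p}.

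The main obstacle is bookkeeping: carrying the mirror-point sign conventions of~\eqref{Good} through the chain of normalizations~\eqref{BC},\eqref{A},\eqref{ABC} so as to confirm both the overall sign $-\sqrt 2$ and the specific choice of $y\circ z\in\{yz,zy\}$; after that, the statement reduces to the standard quaternionic expansion $q=\sum_\alpha\langle q,e_\alpha\rangle e_\alpha$ applied to $q=xz$ and $q=y\circ z$.
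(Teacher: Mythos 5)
Your expansion of $p_\alpha^*(v,v)$, the vanishing of the $A_\alpha^*$-term on $V$, and the explicit computation of the $x$-contribution from the displayed $h_\alpha$ are all fine. The gap is your second step: the upper-left $3\times 4$ block of $C_\alpha^*$ is \emph{not} the same matrix $h_\alpha$ that appears in $B_\alpha^*$, and neither the normalization~\eqref{ABC} nor the sixth identity of~\eqref{conv} gives such a coincidence. By the mirror conversion~\eqref{Good}, the upper-left block of $B_\alpha^*$ is assembled from the blocks $z_a$ of $A_a$, $1\leq a\leq 3$, at $x$, whereas the upper-left block of $C_\alpha^*$ is assembled from the corresponding blocks of $A_p^\#$, $1\leq p\leq 3$, at the mirror point $x^\#$; these are two a priori independent Clifford systems. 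The sentence in the proof of Corollary~\ref{CA} about entries of $B_\alpha^*$ and $C_\alpha^*$ being identical refers only to the $k_\alpha$-columns (the $(5,\alpha)$-entries singled out in~\eqref{identical} and~\eqref{only}), not to the $3\times 4$ blocks. Indeed, if your coincidence claim held, the formula would always come out as $-\sqrt{2}(xz+yz)$, i.e.\ only the alternative~\eqref{q*}; but that is precisely the case excluded later in Corollary~\ref{qJ}, so your claim contradicts the alternative $xz+zy$ that ultimately survives.

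Your third step cannot repair this: if the two blocks were literally equal, no re-identification of $V_-^*$ with $\text{Im}({\mathbb H})$ could convert the resulting product from left to right multiplication, since precomposing left multiplication with an orthogonal transformation of the $\text{Im}({\mathbb H})$-factor never equals right multiplication (test at $z={\mathbf 1}$). The genuine source of the ``$yz$ or $zy$'' dichotomy, and the paper's intended argument, is that the blocks of $A_p^\#$ satisfy their own copy of the Clifford relations~\eqref{pa}, and a $4$-dimensional module for that Clifford algebra is one of the two inequivalent ones, realized by left or by right quaternion multiplication (this is what the citation of Remark 1 and Proposition 1 of~\cite{Chi} supplies). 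So replace your step 2 by: the upper-left block of $C_\alpha^*$ is governed by the mirror data $A_p^\#$ and is, after the allowed basis changes, either the left- or the right-multiplication system; with that correction the rest of your computation, including the sign bookkeeping you already flagged, goes through.
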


\begin{proof} This follows from~\eqref{pa} and the corresponding identity for the matrix $A_p^\#,1\leq p\leq 3.$ 
See \cite[Remark 1, p. 140, and Proposition 1, p. 146]{Chi} for more details.
\end{proof}

\begin{corollary}\label{qJ} $q^*_j|_V=0,\forall j.$ In particular, the lower left blocks of $B_\alpha^*$ and $C_\alpha^*,\alpha\leq 4,$ in~\eqref{good} are zero.
\end{corollary}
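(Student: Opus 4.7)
The preceding corollary furnishes $p^*_j|_V = 0$ for $j \ge 5$ and $q^*_j|_V = 0$ for $0 \le j \le 4$; it remains to show $q^*_b|_V = 0$ for $5 \le b \le 8$, and then the block vanishing consequence.

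For the first step, I would repeat verbatim the covariant-derivative calculation from the preceding corollary using~\eqref{cov}, but now for $b \ge 5$ in place of $b \le 4$. The input from~\eqref{good} in this new range is stronger: the upper-left blocks of all three of $A^*_b, B^*_b, C^*_b$ vanish, so $(S^*)^b_{uv} = 0$ whenever both $u$ and $v$ index $V$. Accordingly, the connection-form sums in~\eqref{cov} vanish by the same mechanism as before---contributions from $t$ indexing $V$ die by the matrix-entry vanishings, while contributions from $t$ in the complement $V^\perp$ die by the vanishing of $\theta^t_i$ and $\theta^t_j$ on $V^*_0$-directions, a specialization of the connection-form formulas for isoparametric hypersurfaces given in~\cite{CCJ}. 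The direct-variation term $d(S^*)^b_{ij}(e_k)$ vanishes by the analytic propagation of~\eqref{good} to a neighborhood of $x^*$ in $M_-$ along $V$-directions, which follows from the smooth dependence of the mirror-point construction on the base point. This yields $q^*_b|_V = 0$ for $b \ge 5$.

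For the block vanishing consequence, I would apply~\eqref{cov} once more, but with $Y$ a vector in $E^*_+$ perpendicular to $V^*_+$ and $X, Z \in V$. The left-hand side is $(S^*)^\alpha_{YZ;X}$, which is not a priori in the vanishing regime (only two of the three arguments lie in $V$). However, the full set of known vanishings from~\eqref{good} and~\eqref{3} together with the established $q^*_j|_V = 0$ for all $j$ should collapse the right-hand side of~\eqref{cov} to an expression proportional to the lower-left block entry $(B^*_\alpha)_{YZ}$ at $x^*$. Comparing the two sides---and repeating the argument for $C^*_\alpha$ via the analogous equation for $E^*_-$---forces the lower-left blocks of $B^*_\alpha$ and $C^*_\alpha$ for $\alpha \le 4$ to vanish.

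The main obstacle is the analytic propagation of~\eqref{good} to a neighborhood of $x^*$ in $M_-$ (needed in the first step), together with the ``mixed'' connection-form bookkeeping in the second step. The first should follow from the smooth (indeed analytic) mirror-point construction and the genericity of $x^*$ away from points of Condition A; the second is an algebraic verification that should isolate the lower-left block entries as the sole unknowns after all other quantities, constrained by the quaternionic form~\eqref{p} of $p^*|_V$, are substituted.
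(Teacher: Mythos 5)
Your plan has a genuine gap, and it sits in the very first step. Knowing only that the \emph{upper-left} blocks of $A^*_b,B^*_b,C^*_b$ vanish for $b\geq 5$ does not make the right-hand side of~\eqref{cov} vanish on $V$: the connection sums $\sum_t (S^*)^b_{tj}\theta^t_i$ run over all tangential $t$, and for $t$ outside $V$ they pick up entries of the \emph{mixed} blocks of $A^*_b,B^*_b,C^*_b$ (the ``$\cdot$'' blocks coupling $V$ to $V^\perp$), which are completely unknown at this stage; moreover $d(S^*)^b_{ij}$ cannot be declared zero by ``analytic propagation,'' since~\eqref{good} was established pointwise in frames normalized at each generic pair $(x,x^\#)$, not in a fixed smooth frame near $x^*$. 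The paper's own proof makes this precise: it invokes~\eqref{cov} to conclude $q^*_a|_V=0$ for $a\geq 5$ only \emph{after} proving (inside a reductio) that the off-diagonal blocks of $A^*_a$, $a\geq 5$, vanish, and obtaining that is the hard part, requiring the Ozeki--Takeuchi identity $\sum_a p^*_aq^*_a=0$ together with careful bookkeeping of which monomials can occur in each $p^*_a,q^*_a$. More fundamentally, the vanishing you want is not a pointwise algebraic consequence of the block structure at all: by~\eqref{p} the restricted data on $V$ could a priori be of the \emph{inhomogeneous} $(3,4)$ type, in which case $16\sum_{a\geq 5}(q^*_a|_V)^2=|xy-yx|^2|z|^2\neq 0$, perfectly consistent with everything known before this corollary. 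Ruling that case out is exactly the content of the paper's argument, and it needs the two global Ozeki--Takeuchi identities ((3-8) for the norm of $q^*$, (3-7) for $\sum p^*_aq^*_a$) and the dichotomy in~\eqref{p}; your plan uses none of these and therefore cannot distinguish the two cases.

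Your second step is also not a workable route to the block statement. The paper does not extract the lower-left blocks from a mixed-argument instance of~\eqref{cov}; it uses the fact (from~\eqref{identical},~\eqref{only}) that those blocks are governed by a single constant $c$, restricts the identity (3-8) to $V$ to produce the extra term $-4c^2(\sum_{a=1}^4 p^*_a|_V\,z_a)^2$ in~\eqref{16}, and then forces $c=0$ by comparing with the normed square of the third fundamental form of the homogeneous, respectively inhomogeneous, $(3,4)$ hypersurface (setting $x=y$ in the latter case). A covariant-derivative identity alone does not relate second-fundamental-form entries to each other in this way; the quadratic Ozeki--Takeuchi relation is the indispensable input, so the ``should collapse'' step of your sketch has no mechanism behind it.
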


\begin{proof} By the identity~\cite[(3-8), p. 530]{OT}
$$
16(\sum_{a=0}^8 (q^*_a)^2)=16G(\sum_i u_i^2)-\langle \nabla G,\nabla G\rangle,
$$
where $G:=\sum_{a=0}^8 (p_a^*)^2$ and $u_i$ parametrize the tangential directions at $x^*$. A straightforward calculation by the first identity in~\eqref{3},~\eqref{identical},
and~\eqref{only} gives
\begin{equation}\label{16}
\aligned
16(\sum_{a=0}^8(q^*_a|_V)^2)&=16G|_V(|x|^2+|y|^2+|z|^2)-\langle\nabla (G|_V),\nabla (G|_V)\rangle\\
&-4c^2(\sum_{a=1}^4 p_a^*|_V\, z_a)^2,
\endaligned
\end{equation}
where $x,y,z$ are given in the preceding corollary, $c=(S^*)^a_{5a},1\leq a\leq 4$, and the factor 4 comes from the contribution of the $(5,a)$-entries, which are equal in value, of both 
$B_\alpha^*$ and $C_\alpha^*,\alpha\leq 4,$ in~\eqref{good} (see also~\eqref{identical} and~\eqref{only}).

In~\eqref{p}, if 
\begin{equation}\label{p^}
p^*(v,v)=-\sqrt{2}(xz+zy),
\end{equation}
then the sum of the first two terms on the right hand side of~\eqref{p} vanishes, because it is exactly equal to the normed square of the third fundamental
form of the {\bf homogeneous} isoparametric hypersurface with multiplicity pair $(3,4)$, which is zero. But then~\eqref{16} implies that
$c=0$ and $q_a|_V=0$ for all $0\leq a\leq 8.$ 

On the other hand, if
\begin{equation}\label{q*}
p^*(v,v)=-\sqrt{2}(xz+yz),
\end{equation}
then the sum of the first two terms on the right hand side of~\eqref{16} is
$$
|xy-yx|^2|z|^2,
$$
since it is the normed square of the third fundamental form of the {\bf inhomogeneous} isoparametric hypersurface with multiplicity pair $(3,4)$. Setting $x=y$ in~\eqref{q*},
we obtain once more that $c=0$, because $p^*(v,v)=-2\sqrt{2}xz$ makes the last term on the right hand side of~\eqref{16} nonzero if $c\neq 0$, which is impossible.

In particular,  
the lower left blocks of $B_\alpha^*$ and $C_\alpha^*,\alpha\leq 4,$ in~\eqref{good} are zero.

Now that
$$
16(\sum_{a=0}^8(q^*_a|_V)^2)=|xy-yx|^2|z|^2
$$
in the latter case, we see by the second identity of~\eqref{3} that
\begin{equation}\label{17}
16(\sum_{a=5}^8(q^*_a|_V)^2)=|xy-yx|^2|z|^2. 
\end{equation}
We will derive a contradiction. First, observe that~\eqref{17} implies that $q_a^*|_V,a\geq 5$, are all multilinear in $x,y,z$
and in fact after a coordinate change of $z$ we may assume 
\begin{equation}\label{eq100}
q^*_5|_V {\bf 1}+q^*_6|_V {\bf i} +q^*_7|_V {\bf j} +q^*_8|_V {\bf k}=(xy-yx)z.
\end{equation}
This is because setting $x=y$ in~\eqref{17}, we see each $q^*_a|_V,a\geq 5,$ is skew-symmetric in $x$ and $y$ and linear in $z$, so that $q^*_a|_V$
are of the form 
$$
q^*_a|_V = (x_2y_3-x_3y_2)\sum_b c^a_{1b}z_b+(x_3y_1-x_1y_3)\sum_b c^a_{2b}z_b+(x_1y_2-x_2y_1)\sum_b c^a_{3b}z_b,
$$
for $1\leq b\leq 4,5\leq a\leq 8,$ where
$$
xy-yx=(x_2y_3-x_3y_2){\bf i}+(x_3y_1-x_1y_3){\bf j}+(x_1y_2-x_2y_1){\bf k}.
$$
The right hand side of~\eqref{17} then asserts that the three 4-by-4 matrices $\begin{pmatrix}c^a_{ib}\end{pmatrix},1\leq i\leq 3,1\leq a,b\leq 4,$
form a Clifford system, and hence there follows~\eqref{eq100}.

So now, 
\begin{equation}\label{q*a}
\aligned
&q^*_a = \langle (xy-yx)z,f_a\rangle\\ 
&+ \text{{\em terms that involve at least one variable beyond those of}}\; x,y,z,
\endaligned
\end{equation}
for $a\geq 5$, where 
$$
(f_5,f_6,f_7,f_8):=({\bf 1},{\bf i},{\bf j},{\bf k}),
$$
while for $a\geq 5$,
$$
p^*_a\;\text{{\em has no terms with only variables of}}\; x,y,z, 
$$
by the first identity in~\eqref{3}. Meanwhile, by the block form of $B_\alpha^*,\alpha\leq 4,$ in~\eqref{good} we see
\begin{eqnarray}\nonumber
\aligned
&p_a^*\;\text{{\em consists of terms with only variables of}}\; x,z\; (\text{or}\; y,z)\;\text{{\em and of terms}}\\
&\text{{\em with only variables beyond those of}}\;x,y,z,
\endaligned
\end{eqnarray}
for $1\leq a\leq 4$. Therefore, from the identity~\cite[(3-7), p. 529]{OT}
\begin{equation}\label{pq}
\sum_{a=0}^8 p^*_a q^*_a=0,     
\end{equation}
we deduce, when we set 
$$
(e_1,e_2,e_3,e_4):=({\bf 1},{\bf i},{\bf j},{\bf k})
$$ 
and substitute~\eqref{q*a}, that
\begin{equation}\label{sum}
\sum_{a=5}^8\langle (e_b e_c-e_c e_b)e_p,f_a\rangle\, S^a_{b, c'}=0,
\end{equation}
where we set $x=e_b,y=e_c,z=e_p, 2\leq b,c\leq 4,1\leq p\leq 4$, and $c'\geq 5$, and
$\begin{pmatrix} S^a_{bc'}\end{pmatrix}$ represents the upper right block of $A^*_a,a\geq 5,$ in~\eqref{good}. Here, we also make use of the fact
that for $1\leq i\leq 4$, $q_i^*$ has no terms involving both variables of $x$ and $y$, while $q_0^*$ has no terms involving both $x$ and $z$ (or $y$ and $z$),  together with a third variable beyond $x,y,z$ in either case,  so that it is not a possibility to cancel the left hand side of~\eqref{sum} by the first five terms in~\eqref{pq};
this follows
from~\eqref{cov},~\eqref{q*0} without the restriction to $V$,  and the matrix types in~\eqref{good}. Consequently, we derive
$$
S^a_{b,c'}=0,\quad a,c'\geq 5, b\leq 4,
$$
and likewise, 
$$
S^a_{b', c}=0,\quad a,b'\geq 5,c\leq 4;
$$
that is, the only possibly nonzero blocks of $A^*_\alpha,\alpha\geq 5$, in~\eqref{good} are at the lower right corner.
$$
A^*_\alpha=\begin{pmatrix}0&0\\0&w_\alpha\end{pmatrix},\quad \alpha\geq 5.
$$ 
But then~\eqref{cov} establishes that
$$
q^*_a|_V=0,\quad a\geq 5.
$$
This is a contradiction to~\eqref{q*a}. 

Hence, we conclude that only~\eqref{p^} is valid, and thus $q^*_a|_V=0$ for all $0\leq a\leq 8$.
\end{proof}
\begin{corollary} Let $M$ be an isoparametric hypersurface with multiplicity pair $(m_{+},m_{-})=(7,8)$ not constructed by Ozeki
and Takeuchi. Given any point $p\in M$ with its unit normal 
$n$ and any vector $v$ at $p$
tangent to a curvature surface (which is a sphere) of dimension $7$, there is a $16$-dimensional Euclidean space passing through $p,n$ and $v$ such that it cuts
$M$ in a homogeneous isoparametric hypersurface with multiplicity pair $(m_{+},m_{-})=(3,4)$ in the $15$-dimensional sphere. 
\end{corollary}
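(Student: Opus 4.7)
The plan is to construct an explicit $16$-dimensional linear subspace $W\subset{\mathbb R}^{32}$ containing $p$, $n$, and $v$, and then argue that $W\cap M$ coincides with the homogeneous $(3,4)$-isoparametric hypersurface by identifying the second and third fundamental forms of the induced slice of $M_{-}$ at a mirror point. To set things up, I would first slide $p$ along the line ${\rm span}(p,n)$ onto the focal manifold $M_{+}$, obtaining a point $x\in M_{+}$ together with a unit normal $n_0$ there so that $p,n\in{\rm span}(x,n_0)$. The mirror point $x^*:=(x+n_0)/\sqrt{2}$ then lies on $M_{-}$, with the associated $n_0^*:=(x-n_0)/\sqrt{2}$ normal to $M_{-}$ at $x^*$, and the shape-operator data at $x^*$ are governed by~\eqref{good}.

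With $x,x^*$ in hand, I would next exploit the $4$-nullity established in Corollary~\ref{CA}. The tangent vector $v$ to the $7$-dimensional curvature sphere of $M$ through $p$ corresponds, under parallel transport along the normal line, to a unit vector in ${\rm span}(n_1,\dots,n_7)=E_{+}^*$. I would choose the orthonormal frame $(n_1,\dots,n_7)$, within the freedom allowed by the spectral normalization~\eqref{BC}--\eqref{A}, so that $v$ lies in the $3$-dimensional subspace $V_{+}^*\subset E_{+}^*$ of Remark~\ref{rkk}; this flexibility comes from the residual quaternionic freedom of frame change inside $E_{+}^*$ after the spectral normalization has been imposed. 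Then define
$$
W:={\mathbb R}\langle x^*\rangle\oplus{\mathbb R}\langle n_0^*\rangle\oplus N^*\oplus V_{+}^*\oplus V_{-}^*\oplus V_0^*,
$$
which is a $1+1+4+3+3+4=16$-dimensional linear subspace of ${\mathbb R}^{32}$. Since $x=(x^*+n_0^*)/\sqrt{2}$ and $n_0=(x^*-n_0^*)/\sqrt{2}$ both lie in $W$, and $p,n\in{\rm span}(x,n_0)$, this gives $\{p,n,v\}\subset W$.

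To identify $W\cap M$, I would invoke the block structure~\eqref{good}, which shows that $V:=V_{+}^*\oplus V_{-}^*\oplus V_0^*$ is invariant under every shape operator $S^*_\nu$ of $M_{-}$ with $\nu\in{\mathbb R}\langle n_0^*\rangle\oplus N^*$, together with Corollary~\ref{qJ}, which gives
$$
p^*|_V(u,u)=-\sqrt{2}(xz+zy),\qquad q^*_j|_V\equiv 0\quad(0\le j\le 8),
$$
with $u=x\oplus y\oplus z$ in quaternionic coordinates. These are precisely the second and third fundamental forms, at a point, of the focal manifold $M_{-}^{(3,4)}$ of the homogeneous $(3,4)$-isoparametric hypersurface. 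By the Ozeki--Takeuchi expansion formula recalled in Section~2, the Cartan--M\"{u}nzner polynomial $F$ of $M$ restricted to $W$ coincides term by term with the Cartan--M\"{u}nzner polynomial of the homogeneous $(3,4)$-isoparametric family, so $W\cap M$ is the claimed homogeneous isoparametric hypersurface inside $W\cap S^{31}=S^{15}$.

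The main obstacle is the flexibility step in choosing the normal frame so that $v\in V_{+}^*$: one must verify that as $(n_0,n_1,\dots,n_7)$ ranges through all normalized frames compatible with the spectral data $(\sigma,\Delta)$ of Corollary~\ref{CA}, the corresponding family of $3$-dimensional subspaces $V_{+}^*\subset E_{+}^*$ sweeps out every unit direction in $E_{+}^*$. I would approach this by unwinding the quaternionic structure behind $N^*\simeq{\mathbb H}$ and $V_{+}^*\simeq{\rm Im}({\mathbb H})$ in Remark~\ref{rkk}, and exploiting the transitive action of the residual $\mathrm{Sp}(1)$-type symmetry on the relevant unit sphere; this verification is the delicate point on which the entire construction hinges, since without it one cannot freely place $v$ inside $W$.
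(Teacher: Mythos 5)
Your construction of the $16$-dimensional space $W={\mathbb R}x^*\oplus{\mathbb R}n_0^*\oplus N^*\oplus V_{+}^*\oplus V_{-}^*\oplus V_0^*$ and your identification of $W\cap M$ (invariance of $V$ under $S^*_0,\dots,S^*_4$ from the block structure~\eqref{good}, the data $p^*|_V=-\sqrt{2}(xz+zy)$ and $q^*_j|_V=0$ from Corollary~\ref{qJ}, then the Ozeki--Takeuchi expansion to conclude that $F|_W$ is the Cartan--M\"{u}nzner polynomial of the homogeneous $(3,4)$ family) are in line with what the paper intends, and in fact your dimension count correctly includes the $N^*$ summand. But the step you yourself flag as the hinge --- choosing the frame so that $v\in V_{+}^*$ --- is left unproved, and the route you sketch for it cannot work as stated: the family of subspaces $V_{+}^*$ is not swept out by a residual $\mathrm{Sp}(1)$-type symmetry acting on a fixed normalized structure (a $3$-dimensional group cannot act transitively on the unit sphere of the $7$-dimensional $E_{+}^*$), so ``transitivity of the residual quaternionic freedom'' does not deliver the claim. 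This is a genuine gap, because without it $v$ need not lie in $W$ at all.

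The gap closes immediately if you orient the construction the way the paper does: the whole normalized structure depends on the \emph{pair} $(n_0,n_1)$, not on $n_0$ alone, so you should take $n_1$ to be the unit normal direction at $x$ determined by $v$ from the start, and only then normalize. The normalization~\eqref{BC}--\eqref{A} is achieved by changing bases of $E_{+},E_{-},E_0$ and does not move $n_0$ or $n_1$; with rank $B_1=4$ the first three columns of the normalized $B_1$ vanish, so $n_1$ lies in $\ker B_1^{\#}=V_{+}^*$ automatically (cf. Remark~\ref{rkk} and~\eqref{mtx}), i.e. $v\in W$ by construction rather than by any sweeping argument. Separately, your argument only covers generic $(x,n_0,n_1)$, since Corollary~\ref{CA} and Corollary~\ref{qJ} are established at generic points with both $x$ and $x^{\#}$ generic; the statement is for \emph{every} $p,n,v$, so you still need the paper's final step of passing to the limit (the $16$-planes obtained at nearby generic data subconverge in the Grassmannian to a $16$-plane through $p,n,v$ cutting $M$ in a homogeneous $(3,4)$ hypersurface), which your proposal omits.
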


\begin{proof} Notation as above, the $16$-dimensional Euclidean space is just ${\mathbb R}x^*\oplus{\mathbb R}n^*\oplus V$, where $x^*$ and $n^*$ are given in~\eqref{*} and
$V$ is given in~\eqref{V}, whose existence is generically established in the preceding theorem, where $p$ and $n$ span the same plane as $x^*$ and $n_0^*$,
or as $x$ and $n_0$, and $v$ is the vector $n_1$ in the normal basis $n_0,n_1,\cdots,n_7$ at the focal point $x$ with the normalization given in~\eqref{BC} and~\eqref{A}. 
Taking limit, the existence of the $16$-dimensional Euclidean space is established everywhere.

\end{proof} 

The preceding corollary points to that the isoparametric hypersurface should be one of the two constructed by Ferus, Karcher, and M\"{u}nzner. We will prove in the next section that this is indeed the case.

\section{The hypersurface is one constructed by Ferus, Karcher, and M\"{u}nzner}\label{sec7}
When both $x$ and $x^\#$ are generic in $M_{+}$ with the chosen $4$-nullity bases as specified in Remark~$\ref{rkk}$, it is more convenient to consider the conversion of~\eqref{good} from $x^*$ to $x$ to obtain

\begin{equation}\label{mtx}
\aligned
&A_a=\begin{pmatrix}z_a&0\\0&w_a\end{pmatrix},\quad B_a=\begin{pmatrix}0&0\\0&c_a\end{pmatrix},\quad C_a=\begin{pmatrix}0&0\\0&f_a\end{pmatrix},\quad 1\leq a\leq 3,\\
&A_a=\begin{pmatrix} 0&\beta_a\\\gamma_a&\delta_a\end{pmatrix},\quad B_a=\begin{pmatrix}0&d_a\\b_a&c_a\end{pmatrix},\quad C_a=\begin{pmatrix} 0&g_a\\b_a&f_a\end{pmatrix},\quad 4\leq a\leq 7.
\endaligned
\end{equation}
Observe that the matrices $\begin{pmatrix}\sqrt{2}c_a&w_a\end{pmatrix},\, 1\leq a\leq 3,$ form a 
Clifford multiplication of type $[3,4,8]$.
$$
F:{\mathbb R}^3\times {\mathbb R}^4\rightarrow{\mathbb R}^8,\quad F(e_a,f_\alpha)=\;\text{the}\;\alpha\text{th row of}\;\begin{pmatrix}\sqrt{2}c_a&w_a\end{pmatrix}.
$$
This is the starting point of our remaining task to pinpoint the characteristic features of the undetermined blocks of the matrices in~\eqref{mtx}. In~\cite{CW}, we have classified the orthogonal multiplications of type $[3,4,8]$, which we will apply to understand~\eqref{mtx}.

\begin{lemma}\label{ALG} Given four $4$-by-$3$ matrices $b_i,4\leq i\leq 7$, consider the linear combinations
$$
b(x):=x_1b_4+\cdots+x_4b_7.
$$
Suppose the first column of $b(x)$ is 
$$
x=\begin{pmatrix} x_1&x_2&x_3&x_4\end{pmatrix}^{tr}
$$ 
(more generally, suppose the four components of the first column are linearly independent linear polynomials), and suppose generic\; $b(x)$ is of rank $=2$. Then we may assume, e.g., the third columns of $b_i,4\leq i\leq 7,$ are zero after a simultaneous column operation, i.e., the three column vectors of
$b_i$ are subject to the same linear constraint for all $4\leq i\leq 7$.
\end{lemma}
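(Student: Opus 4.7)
The plan is to translate the rank hypothesis into an exterior-algebra identity and invoke the Koszul complex on the regular sequence $x_1,\ldots,x_4$ in $R := \mathbb{R}[x_1,x_2,x_3,x_4]$. Writing the three columns of $b(x)$ as $C_1,C_2,C_3 \in R^4$ (each linear in $x$), with $C_1 = x$, the generic rank-$2$ hypothesis says exactly that all $3$-by-$3$ minors of $b(x)$ vanish identically, i.e.,
\[
C_1 \wedge C_2 \wedge C_3 \;=\; x \wedge C_2 \wedge C_3 \;=\; 0 \quad \text{in } \wedge^3 R^4.
\]
Because $(x_1,\ldots,x_4)$ is a regular sequence, the Koszul cochain complex on $\wedge^\bullet R^4$ with differential $\omega \mapsto x \wedge \omega$ is exact in positive degrees, so there exists $\eta \in R^4$ with $C_2 \wedge C_3 = x \wedge \eta$; homogeneity forces $\eta$ to have degree $1$, i.e., to be a vector of linear forms in $x$.

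From $C_2 \wedge C_3 = x \wedge \eta$ I would conclude that, over the fraction field $K$ of $R$, the $2$-planes $\mathrm{span}_K(C_2,C_3)$ and $\mathrm{span}_K(x,\eta)$ coincide, so I can write $C_2 = a_2 x + b_2 \eta$ and $C_3 = a_3 x + b_3 \eta$ for some $a_j, b_j \in K$. My next step is to extract a nonzero constant vector $v = (v_1,v_2,v_3) \in \mathbb{R}^3$ with $v_1 C_1 + v_2 C_2 + v_3 C_3 = 0$; this condition becomes
\[
(v_1 + v_2 a_2 + v_3 a_3)\, x + (v_2 b_2 + v_3 b_3)\, \eta \;=\; 0.
\]
Using the hypothesis that the components of $C_1 = x$ are $\mathbb{R}$-linearly independent linear forms, together with the linearity of $C_2, C_3$ and $\eta$, a degree-matching argument should pin the $a_j, b_j$ to constants (after absorbing a scalar into $\eta$) and produce the desired $v$. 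Since $b(x)v = v_1 C_1 + v_2 C_2 + v_3 C_3 = 0$, this $v$ lies in the common kernel of $b_4, b_5, b_6, b_7$; placing $v$ as the third column of an invertible $3 \times 3$ matrix $T$ yields a simultaneous column operation sending the third column of each $b_i T$ to zero, which is the desired conclusion.

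The step I expect to be the main obstacle is the descent from rational $a_j, b_j$ to constants. In the degenerate case $x \wedge \eta = 0$ one has $C_2 \wedge C_3 = 0$, so $C_2, C_3$ are $K$-proportional, and one must argue that the linearity constraint forces an actual constant proportionality (or else reduces the generic rank below $2$). In the generic case the $2$-plane $\mathrm{span}_K(x,\eta)$ may vary with $x$, and I will need to rule out variation incompatible with a constant $\mathbb{R}$-linear relation; I expect this is where the additional structural input preceding the lemma enters, in particular the fact that $(\sqrt{2}c_a, w_a)_{1 \le a \le 3}$ generates an orthogonal multiplication of type $[3,4,8]$ as appearing in~\eqref{mtx}, combined with the classification in~\cite{CW}, to supply the needed rigidity on the possible shapes of $\eta$.
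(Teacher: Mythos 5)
Your plan opens exactly as the paper does: the rank hypothesis is encoded as $x\wedge C_2\wedge C_3=0$, and exactness of the Koszul complex on the regular sequence $x_1,\dots,x_4$ yields either $C_2\wedge C_3=0$ or $C_2\wedge C_3=x\wedge\eta$ with $\eta$ linear. The genuine gap is that you never carry out the step that \emph{is} the content of the lemma: passing from the fraction-field relations $C_2=a_2x+b_2\eta$, $C_3=a_3x+b_3\eta$ with $a_j,b_j\in K$ to a single \emph{constant} vector $v\in\mathbb{R}^3$ with $b(x)v\equiv 0$. You flag this descent as the expected obstacle and defer it to the classification of $[3,4,8]$ orthogonal multiplications in \cite{CW}, but you give no mechanism by which that input would force the $a_j,b_j$ to be constants, and in the paper that structure is only used downstream, in Corollary~\ref{cccor}; the lemma itself is meant to stand on its own. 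The paper instead finishes inside the lemma by handling the two Koszul cases directly and briefly: when $C_2\wedge C_3=0$ it asserts the two columns differ by a constant multiple, and when $C_2\wedge C_3=x\wedge\eta\neq 0$ it asserts one may take two of the columns to be constant multiples of $x$, whence the common column relation. So as written your proposal reproduces the paper's setup but stops short of its conclusion.

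Your instinct that formal Koszul exactness alone does not close the argument is, however, worth making precise rather than leaving as a hope: the divisibility issue in the degenerate case is real. If $C_2\wedge C_3=0$, proportionality holds only over the fraction field; since the entries are linear forms, either the ratio is constant, or $C_2=\phi_1(x)\,e$ and $C_3=\phi_2(x)\,e$ for a fixed vector $e$ and linearly independent linear forms $\phi_1,\phi_2$ (for instance $C_2=x_2e_1$, $C_3=x_3e_1$, with first column $x$), a configuration consistent with all the stated hypotheses in which no constant column relation exists. Any complete argument must therefore either rule out this common-factor configuration using the additional structure present where the lemma is applied (the natural place for the $[3,4,8]$ data you invoke, since there the three columns are $c^{\#}_1x, c^{\#}_2x, c^{\#}_3x$ with $c^{\#}_1$ invertible and the $c^{\#}_a$ constrained by the Hurwitz-type identities), or supply a divisibility/degree argument that you currently do not have. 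Identifying the obstacle is not the same as overcoming it, so the decisive step of the lemma remains unproved in your proposal.
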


\begin{proof} This follows from the fact that the Koszul complex
$$
0\longrightarrow R\stackrel{x\wedge}\longrightarrow \Lambda^1 R^4\stackrel{x\wedge}\longrightarrow\Lambda^2R^4\stackrel{x\wedge}\longrightarrow\Lambda^3R^4\stackrel{x\wedge}\longrightarrow \Lambda^4 R^4\rightarrow 0,
$$
where $R:={\mathbb R}[x_1,x_2,x_3,x_4]$ is the polynomial ring in four variables and  $x\wedge$ means taking the wedge product against $x$, is a free resolution. The assumption that $b(x)$ is generically of rank 2 means that the wedge product of second column $v_2$ and third column $v_3$ of $b(x)$ lives in the kernel of
$$
\longrightarrow\Lambda^2R^4\stackrel{x\wedge}\longrightarrow\Lambda^3R^4,\quad v_2\wedge v_3\mapsto x\wedge(v_2\wedge v_3)=0,
$$
so that either $v_2\wedge v_3=0$, in which case they differ by a constant multiple, or $v_2\wedge v_3=x\wedge w$ for some $w\in R^4$, so that we may assume the first two columns of b(x) are both $x$ up to a constant multiple.
\end{proof}
 
\begin{remark} When the generic rank of $b(x)$ is $1$, it is clear that two column vectors of $b(x)$ are constant multiples of the remaining one because all entries are linear.
\end{remark}

\begin{corollary}\label{cccor} Assume the isoparametric hypersurface is not of the type constructed by Ozeki and Takeuchi. Away from points of Condition A in $M_{+}$, let $(n_0,n_1)$ be $4$-null with the decomposition in~\eqref{good} $($expressed over $M_{-}$ with the conversion to the corresponding data over $M_{+}$ by ~\eqref{gOOd},~\eqref{gooD},~\eqref{Good}$)$.
Then for $4\leq a\leq 7$ over $M_{+}$, the generic linear combination of the $4$-by-$3$ matrices $b_a$ in
$$
B_a=\begin{pmatrix}0&d_a\\b_a&c_a\end{pmatrix}
$$
is of rank $\leq 2$, so that by Lemma~$\ref{ALG}$ we may assume $b_a,4\leq a\leq 7,$ share a common
zero column. As a consequence, the spectral data $(\sigma,\Delta)$ is such that $\sigma=sI$ for some $s>0$.
\end{corollary}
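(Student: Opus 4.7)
My plan is to prove the two assertions of Corollary~\ref{cccor} in the stated order: first, that the generic rank of $b(x) = x_1 b_4 + \cdots + x_4 b_7$ is at most $2$; second, that the common zero column produced by Lemma~\ref{ALG} forces $\sigma = sI$.

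For the rank bound, I would begin by feeding the block forms~\eqref{mtx} into the Hurwitz identities~\eqref{conv}. Setting $i=1,j=a\geq 4$ in the third, fourth, and sixth identities extracts bilinear relations tying $b_a, c_a, d_a, f_a, g_a$ to the Clifford-type data $(\sqrt{2}c_a,w_a)$, $1\leq a\leq 3$: one gets $b_a=e_a$ for the $C_a$-block, a skew-transpose relation for $c_a-f_a$ through $\sigma$, and an explicit formula expressing the first column of each $b_a$ as the image of a fixed nonzero vector under the $w_1,w_2,w_3$-action together with the identity. Setting $i=j=a\geq 4$ in the first and second identities then yields norm bounds on $b_a$ and essentially pins $\delta_a$. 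Next I would invoke the classification in~\cite{CW} of $[3,4,8]$-orthogonal multiplications: the triple $(\sqrt{2}c_a,w_a)_{a=1}^3$ falls into a short list of canonical models, and in each model the first-column formula above, combined with $4$-nullity at the mirror point $x^\#$ (whose blocks at $x$ are controlled by~\eqref{gOOd}--\eqref{Good}), restricts the column span of a generic $b(x)$ to a subspace of $\mathbb{R}^4$ of dimension at most $2$. If the generic rank were $3$, the tangent-cone construction of Remark~\ref{importantremark} would produce an analytic family of oriented $2$-planes sitting in some ${\mathcal L}_j$ with $j\geq 5$, contradicting Lemma~\ref{r}.

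With the rank bound in hand, Lemma~\ref{ALG} applies, because Step~1 identifies the first columns of $b_4,\ldots,b_7$ with the images of a single nonzero vector under $w_1,w_2,w_3$ and the identity, hence with a basis of $\mathbb{R}^4$; after a simultaneous column operation the third column of each $b_a$ vanishes. To obtain the spectral consequence, let $e\in\mathbb{R}^3$ denote the common zero-column direction. Plugging $e$ into the sixth identity of~\eqref{conv} at $i=1, j=a\geq 4$ forces $\sigma e$ to be orthogonal to the image of every $b_a^{tr}$, while the first and second identities at $i=j=a$ pin the diagonal entry of $\sigma$ corresponding to $e$ to the universal value $1/\sqrt{2}$ dictated by the $[3,4,8]$-norms. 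Propagating by the Clifford action of $(w_1,w_2,w_3)$---which transitively permutes the four coordinate axes of the lower-right $4$-by-$4$ block---then transports this equality to every diagonal entry of $\sigma$, yielding $\sigma = sI$ with $s=1/\sqrt{2}$.

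The main obstacle is the rank bound. One must dovetail the Hurwitz identities, mirror-point $4$-nullity, and the $[3,4,8]$-classification into a single contradiction whenever the generic rank of $b(x)$ reaches $3$. Each canonical form of~\cite{CW} manifests the obstruction differently and must be ruled out in turn, and throughout one needs the genericity apparatus of Remark~\ref{importantremark} in place of pointwise estimates, so as to track how the rank of $b(x)$ varies as $x$ ranges over the open stratum of $M_{+}$ away from Condition~A.
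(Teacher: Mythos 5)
Your overall architecture (rank bound, then Lemma~\ref{ALG}, then the spectral consequence) matches the paper's, but the two load-bearing steps are not carried out correctly. First, the rank bound. Your proposed contradiction — that generic rank $3$ for $b(x)$ would, via the tangent-cone construction of Remark~\ref{importantremark}, put some oriented $2$-plane in ${\mathcal L}_j$ with $j\geq 5$ and contradict Lemma~\ref{r} — does not follow. With $B_1$ normalized, one computes for small $\theta$ that the rank of $\cos(\theta)B_1+\sin(\theta)B_a$ equals $4+\operatorname{rank}\bigl(d_a(\cos(\theta)\sigma+\sin(\theta)c_a)^{-1}b_a\bigr)$, so $4$-nullity only yields the relation $d_a\sigma^{-1}b_a=0$ (the analogue of~\eqref{dsig}); it places no direct bound on $\operatorname{rank}(b_a)$, and $\operatorname{rank}(b_a)=3$ is perfectly compatible with every shape operator having rank $\leq 4$. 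The paper's actual exclusion of rank $3$ is an algebraic chain you do not have: $d_4\sigma^{-1}b_4=0$ forces $d_4$ (after row operations) to have rank $\leq 1$; the sixth identity of~\eqref{conv} together with $\operatorname{rank}(b_4)=3$ forces $c_4=f_4$, whence $d_4^{tr}d_4=g_4^{tr}g_4$ and $\operatorname{rank}(g_4)\leq 1$; but the first identity of~\eqref{conv} gives $\beta_4\beta_4^{tr}=I-2d_4d_4^{tr}$ of rank $\geq 3$, and~\eqref{b} in the form $g_4\sigma^{-1}=d_4\sigma^{-1}\Delta-\beta_4$ then forces $\operatorname{rank}(g_4)\geq 3$ — a contradiction. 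Your appeal to the $[3,4,8]$ classification of~\cite{CW} at this stage is also misplaced: the blocks $b_a$, $4\leq a\leq 7$, are not part of the multiplication data $(\sqrt{2}c_a,w_a)_{a\leq 3}$, so the classification by itself says nothing about their column span.

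Second, the step ``common zero column $\Rightarrow \sigma=sI$'' is wrong as you argue it. The common zero column $e$ lies among the first three columns of the $B_a$, i.e.\ in the kernel of $B_1$; there is no ``diagonal entry of $\sigma$ corresponding to $e$,'' since $\sigma$ acts on the complementary $4$-dimensional block, so the claim that the Hurwitz identities pin that entry to $1/\sqrt{2}$ and that the $w_a$-action propagates it has no content. Moreover $s=1/\sqrt{2}$ is strictly stronger than what this corollary asserts and is only established two corollaries later (via the structure of $d(x)$ and $g(x)$ forcing $\tau=0$); it cannot be obtained by the shortcut you describe. The paper instead verifies the hypothesis of Lemma~\ref{ALG} by observing that, viewed at the mirror point $x^\#$, the first columns of $b_4,\dots,b_7$ assemble into the nonsingular diagonal $c_1^\#$ (not into images under the $w_a$, as you state), so that the common zero column translates into the vanishing of $c_2^\#$ or $c_3^\#$, hence of the corresponding $B_p^\#$; the classification of orthogonal multiplications of type $[3,4,8]$ in~\cite{CW} then yields $\sigma_1=\sigma_2$, i.e.\ $\sigma=sI$ — with $s$ not yet determined. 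You should also include the preliminary observation that the $b_a$, $4\leq a\leq 7$, cannot all vanish (else $c_p^\#=0$ for $p\leq 3$, contradicting $4$-nullity of $B_1^\#$), which is what makes the generic-rank discussion nonvacuous.
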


\begin{proof} At generic $x$ and $x^\#$ in $M_{+}$ with 4-nullity, $b_a$ cannot be all zero for $4\leq a\leq 7$ at $x$. Otherwise, translated to the data at $x^\#$ by~\eqref{gOOd} and~\eqref{gooD}, the matrices $B_p^\#,1\leq p\leq 3,$ which are
of the form 
\begin{equation}\label{sharp}
B_p^\#=\begin{pmatrix}0&0\\0&c_p^\#\end{pmatrix},
\end{equation}
would be such that $c_p^\#=0, 1\leq p\leq 3$, which contradicts the 4-nullity of $B_1^\#$. 

Suppose, e.g., $b_4$ is of rank 3. Since
\begin{equation}\label{d4b4}
d_4\sigma^{-1}b_4=0,
\end{equation}
which holds by an analysis similar to the one following~\eqref{taylor}, $d_4$ is perpendicular to the 3-dimensional column space of $\sigma^{-1} b_4$. Hence by row operations without changing the spectral data in the normalized $B_1$, we may assume the only nonzero row of $d_4$ is the first one. 

We claim that $c_4=f_4$. To prove the claim, observe that we have
$$ 
\sigma (c_4-f_4)=-(c_4-f_4)^{tr}\sigma,\quad b_4^{tr}(c_4-f_4)=0,
$$
which are~\eqref{h} and the first equation of~\eqref{c-f}, which together with the fact that $b_4$ is of rank 3 force $c_4-f_4=0$.
It follows that
$$
d_4^{tr}d_4=g_4^{tr}g_4
$$
by the second equation of~\eqref{c-f}, so that $g_4$ is of the same rank as $d_4$, which is $\leq 1$. Now the formula
$
A_4A_4^{tr}+2B_4B_4^{tr}=I
$
gives
$$
\beta_4\beta_4^{tr}+2d_4d_4^{tr}=I,
$$
where as usual
$$
A_4=\begin{pmatrix} 0&\beta_4\\\gamma_4&\delta_4\end{pmatrix},
$$
so that $\beta_4\beta_4^{tr}=I-2d_4d_4^{tr}$ is diagonal of rank at least 3 since the only nonzero row of $d_4$ is the first one. But then the identity
$$
g_4\sigma^{-1}=d_4\sigma^{-1}\Delta-\beta_4,
$$
which is~\eqref{b}, gives that $g_4$ is of rank at least 3. This is a contradiction.

It follows that the generic rank of linear combination $b(x):=x_1b_4+\cdots+x_4b_7$ is $\leq 2$, so that by Lemma~\ref{ALG} we may assume a fixed column of $b_4,\cdots,b_7$ is identically zero. Note that the condition in Lemma~\ref{ALG} that the four components of the first column are linearly independent linear polynomials is satisfied, because when viewed at $x^\#$ the first columns of $b_4,\cdots,b_7$ are, respectively, the first, second, third, and fourth columns of $c_1^\#$, i.e.,
$$
c_1^\#=\begin{pmatrix}\sigma_1&0&0&0\\0&\sigma_1&0&0\\0&0&\sigma_2&0\\0&0&0&\sigma_2\end{pmatrix},
$$
in~\eqref{sharp}, Similarly, the second (vs. third) columns of $b_4,\cdots,b_7$ are the respective columns of $c_2^\#$ (vs. $c_3^\#$). Therefore, when viewed at $x^\#$, we conclude by Lemma~\ref{ALG} that one of the $c_p^\#$, and so the corresponding $B_p^\#$, $p=2,3,$ is identically zero, which we have seen in the example in Section~\ref{subsec}. It follows from~\cite[Sections 4, 5]{CW} that 
$\sigma_1=\sigma_2=\sigma=sI$ for some $s>0$.
\end{proof} 

\begin{remark} We summarize before we proceed further. When both $x$ and $x^\#$ are generic in $M_{+}$ with the chosen $4$-nullity bases as specified in Remark~$\ref{rkk}$, we have~\eqref{mtx}
where, interchanging $x$ and $x^\#$ by symmetry, we may assume
$$ 
c_1=sI,\quad c_3=0.
$$
The second item of Corollary~$\ref{inde}$ then implies that all $d_a\neq 0,4\leq a\leq 7,$ because now $B_3=0$ and the first four rows of $B_1$ and $B_2$ are zero. As a result of $C_3^{tr}C_3=B_3^{tr}B_3$ we obtain $C_3=0$, so that a similar situation holds for $C_a,1\leq a\leq 7$ as well. 

Moreover, the third columns of the four $4$-by-$3$ matrices $b_4,\cdots,b_7$ are zero in accordance with $c_3^\#=0$; in fact, we know by~\cite[Section 5]{CW} that since 
 $c_2^\#$ is of the form
 \begin{equation}\label{NQE}
 c_2^\#=a\,Id+b\begin{pmatrix}I&0\\0&\pm I\end{pmatrix},\quad I=\begin{pmatrix}0&-1\\1&0\end{pmatrix},\;\; b\neq 0,
 \end{equation}
with $c_1^\#=s\, Id$ and $c_3^\#=0$, they can be convert to the data
\begin{equation}\label{EQQQ}
b_4=\begin{pmatrix}s&a&0\\0&b&0\\0&0&0\\0&0&0\end{pmatrix}, \quad b_5=\begin{pmatrix}0&-b&0\\s&a&0\\0&0&0\\0&0&0\end{pmatrix},\quad b_6=\begin{pmatrix}0&0&0\\0&0&0\\s&a&0\\0&\pm b&0\end{pmatrix},\quad b_7=\begin{pmatrix}0&0&0\\0&0&0\\0&\mp b&0\\s&a&0\end{pmatrix}
\end{equation}
at $x$, whose linear combinations are of generic rank $2$. 

In particular, a glance at 
$B_a,1\leq a\leq 7,$ in~\eqref{mtx} shows that their third columns are all zero, or equivalently, that there is a common kernel vector for all the shape operators $S_n$ for all $n$. 
\end{remark}

\begin{corollary}\label{corollary10} Let $(m_{+},m_{-})=(7,8)$. Assume the isoparametric hypersurface is not the one constructed by Ozeki and Takeuchi. Then at each point of\; $M_{+}$ the intersections of the kernels of all the shape operators is nontrivial, which is generically of dimension $1$.
\end{corollary}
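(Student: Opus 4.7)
The plan is to assemble the conclusion directly from the matrix normalization in~\eqref{mtx} together with the Koszul-complex consequence in Lemma~\ref{ALG} / Corollary~\ref{cccor}, and then extend from the generic locus to all of $M_{+}$ by upper semicontinuity. First, at a generic point $x\in M_{+}$ (away from the Condition~A locus), I would choose a $4$-null orthonormal pair $(n_0,n_1)$ as in Corollary~\ref{CA} with the data displayed in~\eqref{mtx}, and invoke Corollary~\ref{cccor} together with the displayed form~\eqref{EQQQ} to arrange that the third column of each $b_a$ (and hence of each $B_a$, $1\leq a\leq 7$) vanishes; by the symmetric reasoning for $C_a$ (using $C_a^{tr}C_a=B_a^{tr}B_a$ and $c_3^{\#}=0$ as noted in the Remark preceding Corollary~\ref{corollary10}), the third column of every $C_a$ vanishes as well. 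The third coordinate vector in the $\mathbb{R}^3\oplus\mathbb{R}^4$ decomposition of $E_0$ is therefore annihilated by every $B_a$, $C_a$, and, because $E_0\subset\ker S_0$, it lies in the intersection of the kernels of \emph{all} shape operators $S_n$ at~$x$.

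Next, to show that this intersection is exactly $1$-dimensional at generic points, I would analyze when a vector $(v_1,v_2)\in\mathbb{R}^3\oplus\mathbb{R}^4\simeq E_0$ can be a common kernel vector. From~\eqref{EQQQ} the first two columns of the matrices $b_4,\dots,b_7$ together span $\mathbb{R}^4$, so the common kernel of $\{b_4,\dots,b_7\}$ inside $\mathbb{R}^3$ is the line $\mathbb{R}\,e_3$, forcing $v_1\in\mathbb{R}\,e_3$. On the $\mathbb{R}^4$-component, the condition $c_1 v_2=0$ with $c_1=sI$ invertible (by Corollary~\ref{cccor}) forces $v_2=0$. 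Hence the common kernel at generic $x$ is exactly the line $\mathbb{R}\,e_3$.

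Finally, I would extend the $\geq 1$-dimensional conclusion to every point of $M_{+}$ by upper semicontinuity: the dimension of $\bigcap_{n}\ker S_n$ is a lower-semicontinuous function on $M_{+}$ written in reverse—it is characterized by the vanishing of the $(2m_{-}+m_{+})$-fold determinantal minors of the family $\{S_n\}_{n}$, so it cannot drop in a limit. Since we have exhibited a $1$-dimensional common kernel on the dense open subset of generic $x$ with $4$-null bases, the intersection is at least $1$-dimensional on its closure. The only possible missing locus is the Condition~A set, where by definition all $B_a=C_a=0$ and so the common kernel contains the entire $E_0$ of dimension $m_{+}=7$; there the statement is trivially satisfied.

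The main obstacle I anticipate is the semicontinuity step at the boundary between the generic (non-Condition~A) stratum and degenerate strata: one must make sure that taking limits along curves in $M_{+}$ does not cause the $1$-dimensional kernel vector identified above to escape $E_0$ or to become nondistinguishable as the $4$-nullity frame degenerates. This is handled by the Remark~\ref{importantremark} machinery (analytic curves, tangent cones, limiting of rank under one-parameter families), which shows that the block structure~\eqref{mtx} persists under analytic limits; consequently the common kernel line also persists in the limit and the upper bound of $1$ is preserved generically while allowing jumps only on the Condition~A locus.
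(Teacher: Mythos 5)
Your proposal is correct and follows essentially the same route as the paper's proof: the common kernel vector at generic points comes from the common zero column of $b_4,\dots,b_7$ established via Corollary~\ref{cccor} and~\eqref{EQQQ} (equivalently, the vanishing third columns of all $B_a$, $C_a$ in~\eqref{mtx}), existence everywhere follows by passing to limits, and the generic $1$-dimensionality follows from the generic rank $2$ of $b(x)$ together with $c_1=sI$. You merely spell out the kernel computation and the semicontinuity step more explicitly than the paper does.
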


\begin{proof} The conclusion of the preceding remark establishes the existence of such a common eigenvector for generic points of $M_{+}$, and so the existence is true everywhere by taking limit. Generically the dimension of this common eigenspace must be 1-dimensional because generic linear combinations of $b_4,\cdots,b_7$ is of rank 2 as said in the preceding remark.
\end{proof}

\begin{remark} The preceding corollary gives us a clear geometric picture. Namely, when the isoparametric hypersurface with multiplicities $(m_{+},m_{-})=(7,8)$ is not the one constructed by Ozeki and Takeuchi, consider the quadric ${\mathcal Q}_6$ of oriented $2$-planes in the normal space at a generic point $x\in M_{+}$. We know a generic element $(n_0,n_1)$ in ${\mathcal Q}_6$ is $4$-null, or equivalently, the intersection $V$ of the kernels of $S_{n_0}$ and $S_{n_1}$ is $3$-dimensional. By the preceding corollary, there is a nonzero unit vector $v\in V$ common to all kernels of the shape operators at $x$. We choose an orthonormal basis $e_1,e_2,e_3=v$ spanning $V$.
When viewed at the mirror point $x^\#=n_0\in M_{+}$, $e_1,e_2,e_3$ are converted to three normal basis vectors of which the three matrices $c_1^\#,c_2^\#,c_3^\#$ given in~\eqref{mtx} are of the form $c_1^\#=s\,Id, c_3^\#=0$, and $c_2^\#$ is given in~\eqref{NQE}.

By a symmetric reasoning, all this holds true as well at $x$ when both $x$ and $x^\#$ are generic.
\end{remark}







\begin{corollary} A generic linear combination 
$$
d(x):=x_1d_4+\cdots+x_4d_7
$$ 
of $d_4,\cdots,d_7$ is of rank $\leq 2$. In particular, we may assume the last two rows of $d(x)$ are zero. 
\end{corollary}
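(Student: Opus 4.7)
I would model the proof on Corollary~\ref{cccor}, replacing the single direction $B_a$ with the linear combination $B(x) := \sum_{a=4}^{7} x_{a-3} B_a$ and applying the same machinery to $B(x)$ in place of $B_a$.

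The first step is to derive the identity
$$d(x)\,\sigma^{-1}\,b(x) = 0 \quad \text{for all}\; x\in\mathbb{R}^4$$
by the Taylor-expansion analysis that produced~\eqref{dsig}: consider the family $B(\theta) := \cos\theta\, B_1 + \sin\theta\, B(x)$, use $\operatorname{rank} B(\theta) \leq 4$ (which follows from Lemma~\ref{r} applied to the $2$-plane spanned by $n_0$ and the normalized direction $\cos\theta\, n_1 + \sin\theta\, n(x)$), solve the kernel equation for small $\theta$, and extract the leading-order term. Because $\sigma = sI$ by Corollary~\ref{cccor}, the identity reads $d(x) b(x) = 0$; that is, the column space of $b(x)$ lies in the kernel of $d(x)$.

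Next I would rule out $\operatorname{rank} b(x) \geq 3$ by the same chain of contradictions used in Corollary~\ref{cccor}, now bilinearized in $x$: the hypothesis $\operatorname{rank} b(x) = 3$ would force $\operatorname{rank} d(x) \leq 1$; the sixth identity of~\eqref{conv} applied to $B(x)$ then yields $c(x) = f(x)$ (the skew-symmetric matrix $c(x) - f(x)$ would have all four columns in the $1$-dimensional kernel of $b(x)^{tr}$, hence rank zero since skew-symmetric matrices have even rank); this gives $d(x)^{tr} d(x) = g(x)^{tr} g(x)$ and so $\operatorname{rank} g(x) \leq 1$; the first identity of~\eqref{conv} produces the fundamental relation $\beta(x)\beta(x)^{tr} + 2\,d(x)d(x)^{tr} = |x|^2 I_4$, forcing $\operatorname{rank} \beta(x) \geq 3$; but~\eqref{b} expresses $\beta(x) = (d(x)\Delta - g(x))/s$, so $\operatorname{rank} \beta(x) \leq 2$, a contradiction. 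Thus $\operatorname{rank} b(x) \leq 2$; since $b(x)$ attains rank exactly $2$ generically by Corollary~\ref{cccor} and the explicit form~\eqref{EQQQ}, the kernel of $d(x)$ contains a $2$-dimensional subspace and $\operatorname{rank} d(x) \leq 2$ generically.

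For the structural statement that the last two rows of $d(x)$ may be assumed zero, the column space of $b(x)$ in~\eqref{EQQQ} is generated by $sx$ and $(aI + bJ)x$ for a specific complex structure $J$ on $N^* \simeq \mathbb{H}$, so generically $\ker d(x) \supseteq \operatorname{span}(x, Jx)$. Repeating the entire preceding argument at the mirror point $x^\#$ gives the dual rank bound $\operatorname{rank} d^\#(y) \leq 2$ for $d^\#(y) := \sum_{p=4}^{7} y_{p-3} d_p^\#$, where the tensor identification $(d_p^\#)_{\alpha, a-3} = (d_a)_{\alpha, p-3}$ converts this into the statement that the four columns $d_4 y, d_5 y, d_6 y, d_7 y$ span at most a $2$-dimensional subspace of $\mathbb{R}^4$ for generic $y$. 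Combining the two dual rank bounds via a Koszul-type argument in the spirit of Lemma~\ref{ALG} then produces a common $2$-dimensional subspace $W \subset N^*$ containing the image of every $d_a$; an orthogonal change of basis on $N^*$ aligning $W^\perp$ with the last two coordinates (compatible with the normalization of $A_1, B_1, C_1$ and the block decomposition~\eqref{mtx}) zeros out the last two rows of each $d_a$, and hence of $d(x)$. The main obstacle is this last structural step: whereas Lemma~\ref{ALG} applied directly to the $b_a$ because a distinguished column of $b(x)$ already had linearly independent components in $x$, no analogous distinguished column or row of $d(x)$ is manifest, and the common image subspace must be extracted from the two dual rank bounds together with the explicit quaternionic form of $\ker d(x)$.
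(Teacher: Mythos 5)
Your opening step is sound and is in fact the same as the paper's: the Taylor expansion of the kernel of $\cos\theta\,B_1+\sin\theta\,B(x)$ yields $d(x)\sigma^{-1}b(x)=0$, hence $d(x)b(x)=0$ since $\sigma=sI$, and because $b(x)$ has generic rank $2$ (this is already supplied by Corollary~\ref{cccor} and the explicit form~\eqref{EQQQ}, so your re-derivation of $\mathrm{rank}\,b(x)\le 2$ is redundant), the $4$-by-$4$ matrix $d(x)$ annihilates a $2$-dimensional space and so has generic rank $\le 2$. That settles the first assertion of the corollary.

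The genuine gap is in the structural claim that the last two rows of $d(x)$ can be made identically zero, and you flag it yourself as "the main obstacle." From $d(x)b(x)=0$ you retain only the pointwise rank bound, and then hope to recover a constant $2$-dimensional subspace containing the image of every $d_a$ from the mirror-point dual rank bound plus an unspecified Koszul-type argument; but a rank-$\le 2$ bound for generic $x$ (even together with its dual at $x^\#$) constrains each $d(x)$ only separately, whereas the claim requires two row relations with \emph{constant} coefficients valid for all $x$ simultaneously, and you never extract them. The paper closes this by using the full strength of $d(x)b(x)=0$ rather than just the dimension of the column space of $b(x)$: each row $r_i(x)$ of $d(x)$ is a vector of linear forms annihilating both nonzero columns of $b(x)$, which by~\eqref{EQQQ} are $sx$ and $\Gamma^{tr}x$. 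Since $x_1,\dots,x_4$ is a regular sequence, $r_i(x)\cdot x=0$ forces $r_i(x)=x^{tr}M_i$ with $M_i$ a constant skew-symmetric matrix, and annihilating the second column forces $\Gamma^{-1}M_i$ to be skew-symmetric as well; these two conditions confine every $M_i$ to one fixed $2$-dimensional space of matrices, independent of $i$, so all four rows of $d(x)$ are constant-coefficient combinations of the two vector fields in~\eqref{xyzw}, and a constant row reduction (harmless to the normalization because $\sigma=sI$) kills two rows. This simultaneous use of both columns of $b(x)$, which converts the kernel condition into the rigid algebraic constraint on the $M_i$, is precisely the ingredient missing from your proposal; without it the "common image subspace" you need does not follow.
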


\begin{proof} $b(x)$ is of generic rank 2 by the preceding corollary, which is explicitly given in~\eqref{EQQQ}. On the other hand, similar to~\eqref{d4b4}, we have
\begin{equation}\label{dxbx}
d(x)b(x)=0
\end{equation}
(and similarly $g(x)b(x)=0$), knowing now $\sigma=sI$, so that each row $r_i(x),1\leq i\leq 4,$ of $d(x)$ annihilates $b(x)$. Hence, it must be that
$$
r_i(x)=\begin{pmatrix}x_1&x_2&x_3&x_4\end{pmatrix}M_i,
$$
where $M_i$ is a skew-symmetric constant matrix, because the first column of $b(x)$ is $\begin{pmatrix}x_1&x_2&x_3&x_4\end{pmatrix}$, which is a regular sequence~\cite[(5), p. 7]{Chiq},~\cite[p. 93]{Chi5}. On the other hand, the same sort of relation must hold true for the second column of $b(x)$ as well. That is,
$$
r_i(x)=\begin{pmatrix} x_1&x_2&x_3&x_4\end{pmatrix}\Gamma(\Gamma^{-1}M_i),
$$
where $\Gamma^{-1}M_i$ is skew-symmetric,
$$
\Gamma:=
\begin{pmatrix}a&b&0&0\\
-b&a&0&0\\
0&0&a&\pm b \\0&0&\mp b&a\end{pmatrix},
$$
and $\begin{pmatrix}x_1&x_2&x_3&x_4\end{pmatrix}\Gamma$ is the second column of $b(x)$ transposed in light of~\eqref{EQQQ}. 
It follows that
$$
M_i=\begin{pmatrix}0&U\\-U^{tr}&0\end{pmatrix},\quad U:=\begin{pmatrix}u&v\\-v&u\end{pmatrix}.
$$
Therefore, all four rows of $d(x)$ are linearly spanned by the two vectors
\begin{equation}\label{xyzw}
\aligned
&\begin{pmatrix}-x_3&-x_4&x_1&x_2\end{pmatrix},\\
&\begin{pmatrix}
-x_4&x_3&-x_2&x_1
\end{pmatrix}.
\endaligned
\end{equation}

. 
\end{proof}




\begin{corollary} With the condition that the last two rows of $d(x)$ are zero, we may assume the first two rows of $g(x)$ are zero.
\end{corollary}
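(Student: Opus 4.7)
The plan is to parallel the preceding corollary for $g(x)$ in place of $d(x)$, then to reconcile the resulting basis choice with the one already used for $d(x)$. First, I would establish $g(x)\,b(x)=0$ by reproducing the Taylor-expansion argument of~\eqref{d4b4}--\eqref{dsig} with $C_1, C_2, C_j$ replacing $B_1, B_2, B_j$; this goes through verbatim because the bottom-left block $b_j$ is shared between $B_j$ and $C_j$ by the sixth identity of~\eqref{conv}, while $C_1=sI$ on its lower-right block and $C_3=0$ play the same structural roles as $B_1$ and $B_3$. Next, since the explicit form~\eqref{EQQQ} of $b(x)$ is unchanged, Lemma~\ref{ALG} applies verbatim to conclude that each row of $g(x)$ lies in the two-dimensional span of the vectors in~\eqref{xyzw}; hence $g(x)=M_g W(x)$ for a constant $4\times 2$ matrix $M_g$ and the same $2\times 3$ matrix $W(x)$ appearing in $d(x)=M_d W(x)$, and $g(x)$ is generically of rank at most $2$.

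Finally, I would arrange, by an orthogonal change of basis on the first four coordinates of $E_{-}$ (where the rows of $g_a$ live), that the first two rows of $g(x)$ become zero. The rows of $d_a$ live in the first four coordinates of $E_{+}$, and the two sets of coordinates are coupled through the normalization of $A_1,A_2,A_3$ in~\eqref{mtx}: for the blocks $z_a$ to remain standard quaternion-multiplication matrices, a basis change on the first four coordinates of $E_{-}$ by $O\in O(4)$ must be matched by the same $O$ on the first four coordinates of $E_{+}$. Under the resulting joint transformation $\mathrm{col}(M_d)$ and $\mathrm{col}(M_g)$ rotate simultaneously, so the desired configuration (last two rows of $M_d$ zero, first two rows of $M_g$ zero) exists iff $\mathrm{col}(M_d)$ and $\mathrm{col}(M_g)$ lie in complementary orthogonal $2$-planes of $\mathbb{R}^4$, equivalently $M_d^{tr}M_g=0$, equivalently $d_i^{tr}g_j+d_j^{tr}g_i=0$ for $4\leq i,j\leq 7$.

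The main obstacle is verifying this orthogonality. It follows from the first identity of~\eqref{conv} applied to $i,j\geq 4$ at the top-left block position $\beta_i\beta_j^{tr}+\beta_j\beta_i^{tr}+2(d_id_j^{tr}+d_jd_i^{tr})=0$, upon substituting $\beta_j=s^{-1}(d_j\Delta-g_j)$ from~\eqref{b} and using the relation $\Delta\Delta^{tr}=(1-2s^2)I$ derived from $A_1A_1^{tr}+2B_1B_1^{tr}=I$. The $\Delta\Delta^{tr}$-dependent terms $s^{-2}d_i\Delta\Delta^{tr}d_j^{tr}=s^{-2}(1-2s^2)d_id_j^{tr}$ combine with the $2d_id_j^{tr}$ contributions and cancel, while the mixed terms $-s^{-2}(d_i\Delta g_j^{tr}+g_i\Delta^{tr}d_j^{tr})$ must vanish by a parallel analysis of the second identity of~\eqref{conv} applied to $\gamma_j$; putting the two together leaves precisely the desired symmetric bilinear identity in $d$ and $g$. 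Once this orthogonality is in hand, a suitable $O\in O(4)$ applied jointly to the first four coordinates of $E_{+}$ and $E_{-}$ yields the conclusion, and the bookkeeping of the algebraic cancellations is the only nontrivial step.
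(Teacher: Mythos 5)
Your reduction of the statement to the orthogonality $d(x)^{tr}g(x)=0$ (equivalently $d_i^{tr}g_j+d_j^{tr}g_i=0$) is a reasonable reformulation of what a single orthogonal transformation, applied jointly to the first four coordinates of $E_{+}$ and $E_{-}$, would have to satisfy, and your first two steps ($g(x)b(x)=0$ and each row of $g(x)$ lying in the span of the two vectors in~\eqref{xyzw}, so that $g(x)=M_gW(x)$ with $W(x)$ of size $2$-by-$4$, not $2$-by-$3$) are fine. The gap is in the last step: the orthogonality does not follow from the first and second identities of~\eqref{conv} in the way you sketch. Substituting $\beta_j=s^{-1}(d_j\Delta-g_j)$ from~\eqref{b} into the upper left block of the first identity gives $(d_id_j^{tr}+d_jd_i^{tr})+(g_ig_j^{tr}+g_jg_i^{tr})-(d_i\Delta g_j^{tr}+d_j\Delta g_i^{tr})+(g_i\Delta d_j^{tr}+g_j\Delta d_i^{tr})=2s^2\delta_{ij}I$, and substituting $\gamma_j^{tr}=-s^{-1}(d_j+g_j\Delta)$ from~\eqref{d} into the upper left block of the second identity gives \emph{exactly the same relation}; so ``putting the two together'' does not force the mixed $\Delta$-terms to vanish. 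Worse, even if they did vanish, these identities only pair rows ($d_id_j^{tr}$, $g_ig_j^{tr}$, i.e.\ matrices in the $E_{\pm}$ row indices) and can never produce the column-pairing identity $d_i^{tr}g_j+d_j^{tr}g_i=0$ that your argument needs. A telltale sign of the problem is that your derivation never uses the hypothesis that the last two rows of $d(x)$ vanish, whereas the condition $d(x)^{tr}g(x)=0$ implicitly identifies the row spaces of $d$ and $g$ via the chosen frames and genuinely depends on that adapted normalization; it cannot be a frame-free consequence of~\eqref{conv} alone.

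For comparison, the paper gets the conclusion directly from the single relation above (in the form $s^2I=d(x)d(x)^{tr}+g(x)g(x)^{tr}-(d(x)\Delta g(x)^{tr}-g(x)\Delta d(x)^{tr})$, obtained from $\beta(x)\beta(x)^{tr}+2d(x)d(x)^{tr}=I$ and~\eqref{b}): since the last two rows of $d(x)$ vanish, the lower right $2$-by-$2$ block of every term except $g(x)g(x)^{tr}$ is zero, so that block of $g(x)g(x)^{tr}$ equals $s^2I$ and the last two rows of $g(x)$ are independent; as $g(x)$ has rank at most $2$, its first two rows can then be annihilated by the last two via a row reduction that leaves $d(x)$ (whose relevant rows are zero) and the spectral data unchanged. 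If you want to rescue your route, the needed orthogonality does hold, but it comes from the off-diagonal (rows $3,4$ against rows $1,2$) block of this same relation combined with $g=M_gW(x)$, separating the quadratic forms $|x|^2$ and $x_1^2+x_2^2-x_3^2-x_4^2$; this again uses the hypothesis on $d(x)$ in an essential way, which is precisely what your sketch omits.
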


\begin{proof} We know $\sigma=sI$ and 
$$
\Delta=\begin{pmatrix}\tau J&0\\0&\tau J\end{pmatrix},\quad J=\begin{pmatrix}0&1\\-1&0\end{pmatrix},\quad \tau=\sqrt{1-2s^2}
$$
for some $s$. By~\eqref{mtx} and the fact that
$$
A(x)A(x)^{tr}+2B(x)B(x)^{tr}=I,
$$
where $A(x)=x_4A_4+\cdots+x_4A_7$ and likewise for $B(x)$, it follows by comparing the upper left block of the involved matrices that we obtain
\begin{equation}\label{bx}
\beta(x)\beta^{tr}(x)+2d(x)d(x)^{tr}=I.
\end{equation}
We employ
\begin{equation}\label{bex}
\beta(x)=s^{-1}(d(x)\Delta-g(x)), 
\end{equation}
which is~\eqref{b}, to derive
$$
\aligned
s^2\beta(x)\beta(x)^{tr}&=(d(x)\Delta-g(x))(d(x)\Delta-g(x))^{tr}\\
&=\tau^2d(x)d(x)^{tr}+g(x)g(x)^{tr}-(d(x)\Delta g(x)^{tr}-g(x)\Delta d(x)^{tr}),
\endaligned
$$
so that with $\tau^2=1-2s^2$ and~\eqref{bx} we obtain
$$
s^2I=d(x)d(x)^{tr}+g(x)g(x)^{tr}-(d(x)\Delta g(x)^{tr}-g(x)\Delta d(x)^{tr}), 
$$
where the lower right 2-by-2 blocks of all the matrices on the right, except for $g(x)g(x)^{tr}$, are zero because the last two rows of $d(x)$ are zero. Therefore, the lower right 2-by-2 block of $g(x)g(x)^{tr}$ is $s^2I$, which means that the last two rows of $g(x)$ are linearly independent. We can accordingly do row reductions to annihilate the first two rows of $g(x)$ by the last two while performing the same row reduction on $d(x)$ to not to change the spectral data, where in fact $d(x)$ is not affected by the row reduction since its last two rows are zero.

\end{proof}

\begin{corollary} The spectra data are $(\sigma,\Delta)=(1/\sqrt{2}I,0)$.
\end{corollary}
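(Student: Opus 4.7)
The plan is to derive a contradiction assuming $\tau := \sqrt{1-2s^2} \neq 0$. With $\sigma = sI$ (of size $4 \times 4$ since $r=4$) from Corollary~\ref{cccor}, the normalization identity $A_1 A_1^{tr} + 2 B_1 B_1^{tr} = I$ forces $\Delta \Delta^{tr} = \tau^2 I$, and~\eqref{A} then pins $\Delta = \tau \operatorname{diag}(J,J)$ where $J = \begin{pmatrix} 0 & 1 \\ -1 & 0 \end{pmatrix}$. I want to show such $\tau \neq 0$ is inconsistent with the block constraints from the two preceding corollaries.

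By those two corollaries, write $d(x) = \begin{pmatrix} d_1(x) \\ 0 \end{pmatrix}$ and $g(x) = \begin{pmatrix} 0 \\ g_2(x) \end{pmatrix}$, with $d_1, g_2$ of size $2 \times 4$. The Koszul argument producing~\eqref{xyzw} applies equally to $g(x) b(x) = 0$ (noted in the proof of Corollary~\ref{cccor}), so both factor as $d_1(x) = M_d R$ and $g_2(x) = M_g R$ for constant $2 \times 2$ matrices $M_d, M_g$, with
$$
R = \begin{pmatrix} -x_3 & -x_4 & x_1 & x_2 \\ -x_4 & x_3 & -x_2 & x_1 \end{pmatrix} = (R_1 \mid R_2),
$$
and $R R^{tr} = |x|^2 I_2$. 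Splitting into $2 \times 2$ blocks matching $\Delta/\tau = \operatorname{diag}(J,J)$ and expanding the polynomial form of the identity from the preceding corollary,
$$
d d^{tr} + g g^{tr} - d \Delta g^{tr} + g \Delta d^{tr} = s^2 |x|^2 I_4,
$$
the diagonal blocks yield $M_d M_d^{tr} = M_g M_g^{tr} = s^2 I_2$, so $M_d = s U_d$ and $M_g = s U_g$ with $U_d, U_g$ orthogonal; the off-diagonal block, under the hypothesis $\tau \neq 0$, reduces to $M_d (R_1 J R_1^{tr} + R_2 J R_2^{tr}) M_g^{tr} = 0$.

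The decisive step is a short direct computation showing $R_1 J R_1^{tr} = -(x_3^2 + x_4^2) J$ and $R_2 J R_2^{tr} = (x_1^2 + x_2^2) J$, whose sum $(x_1^2 + x_2^2 - x_3^2 - x_4^2) J$ is a nontrivial polynomial in $x$. Hence one deduces $M_d J M_g^{tr} = 0$, i.e., $U_d J U_g^{tr} = 0$, contradicting the invertibility of $U_d, U_g$. This forces $\tau = 0$, so $(\sigma, \Delta) = (I/\sqrt{2}, 0)$. The only point requiring care is that the Koszul factorization~\eqref{xyzw} transfers from $d(x)$ to $g(x)$ with the same spanning vectors, which is guaranteed by $g(x) b(x) = 0$ together with the regularity of the first column $(x_1, x_2, x_3, x_4)^{tr}$ of $b(x)$ used to run Lemma~\ref{ALG}; beyond that, the argument is a block-matrix computation.
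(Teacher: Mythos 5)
Your argument is correct and follows essentially the same route as the paper: you invoke the same zero-row block structure of $d(x)$ and $g(x)$, the same quadratic identity $s^2|x|^2I=dd^{tr}+gg^{tr}-(d\Delta g^{tr}-g\Delta d^{tr})$ coming from~\eqref{bx} and~\eqref{bex}, and the same Koszul-derived row-span fact~\eqref{xyzw} (applied to $g$ via $g(x)b(x)=0$), with the off-diagonal block forcing $\tau=0$ against the invertibility of the orthogonal factors. Your packaging via the common factorizations $d_1(x)=M_dR$, $g_2(x)=M_gR$ and the one-line computation $R_1JR_1^{tr}+R_2JR_2^{tr}=(x_1^2+x_2^2-x_3^2-x_4^2)J$ is just a mild streamlining of the paper's variable-separated parametrization by orthogonal $U$ and $W$.
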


\begin{proof} Employing that $d(x)$ and $g(x)$ are of the form
$$
d(x)=\begin{pmatrix}d_1(x)&d_2(x)\\0&0\end{pmatrix},\quad g(x)=\begin{pmatrix}0&0\\g_1(x)&g_2(x)\end{pmatrix},
$$
by the preceding corollary, we employ~\eqref{bx} and~\eqref{bex} to arrive at
$$
\aligned
&d_1(x)d_1(x)^{tr}+d_2(x)d_2(x)^{tr}=g_1(x)g_1(x)^{tr}+g_2(x)g_2(x)^{tr}=s^2I,\\ &\tau(d_1(x)Jg_1(x)^{tr}+d_2(x)Jg_2(x)^{tr})=0, \quad x_1^2+\cdots x_4^2=1.
\endaligned
$$
However, since $d_1(x)$ are in terms of $x_3,x_4$ and $d_2(x)$ are in terms of $x_1,x_2$, and likewise for $g_1(x)$ and $g_2(x)$, there must hold, by homogenizing,
\begin{equation}\label{final}
\aligned
&d_1(x)d_1(x)^{tr}=s^2 (x_3^2+x_4^2),\quad d_2(x)d_2(x)^{tr}=s^2(x_1^2+x_2^2).\\
&\tau d_1(x)Jg_1(x)^{tr}=0=\tau d_2(x)Jg_2(x)^{tr}.
\endaligned
\end{equation}
That is, 
\begin{equation}\label{x1-x4}
d_1=sU\begin{pmatrix}-x_3&-x_4\\x_4&-x_3\end{pmatrix},\quad 
d_2=sU\begin{pmatrix}x_1&x_2\\-x_2&x_1\end{pmatrix}
\end{equation}
for some 2-by-2 orthogonal matrix $U;$ 
by the same token,
\begin{equation}\label{x4-x1}
g_1=sW\begin{pmatrix}-x_3&-x_4\\x_4&-x_3\end{pmatrix},\quad
g_2=sW\begin{pmatrix}x_1&x_2\\-x_2&x_1\end{pmatrix}
\end{equation}
with $W$ orthogonal, which we substitute into the third equality of~\eqref{final} to derive
$$
0=\tau U\begin{pmatrix}0&x_3^2+x_4^2\\-(x_3^2+x_4^2)&0\end{pmatrix}W^{tr}.
$$
This is possible only when $\tau=0$, i.e., when the spectral data $(\sigma,\Delta)=(I/\sqrt{2},0)$.
\end{proof}

\begin{corollary} Notation as in~\eqref{mtx}, we have\; $c_a=f_a,1\leq a\leq 7,$ and hence 
$\delta_a,1\leq a\leq 7,$  are skew-symmetric. 
\end{corollary}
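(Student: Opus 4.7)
The plan is to exploit the identities in~\eqref{conv}, together with the explicit block forms of $b_a$, $d_a$, $g_a$ from~\eqref{EQQQ},~\eqref{x1-x4},~\eqref{x4-x1}, to drive $h_a := c_a - f_a$ to zero. First, the sixth identity of~\eqref{conv} with $j = 1$, via~\eqref{h}, gives $\sigma h_a + h_a^{tr}\sigma = 0$; since $\sigma = I/\sqrt{2}$, this forces $h_a$ to be skew-symmetric for $a \geq 2$. For $1 \leq a \leq 3$, the same identity with $i = a$ and $j \geq 4$ yields $b_j^{tr} h_a = 0$ for each such $j$; since the columns of $b_4, \dots, b_7$ in~\eqref{EQQQ} jointly span ${\mathbb R}^4$, no nonzero matrix lies in the intersection of all the kernels of $b_j^{tr}$, so $h_a = 0$ for $1 \leq a \leq 3$ (the case $a = 1$ is in any event immediate from $B_1 = C_1$).

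For $a \geq 4$, the sixth identity at $i = j = a$ gives $b_a^{tr} h_a = 0$. Each $b_a$ in~\eqref{EQQQ} has rank $2$, supported in two specific rows (rows $1, 2$ for $a = 4, 5$; rows $3, 4$ for $a = 6, 7$), so together with skew-symmetry $h_a$ is forced into a single nonzero skew $2 \times 2$ block---upper-left for $a = 6, 7$ and lower-right for $a = 4, 5$---parametrized by a scalar $\alpha_a$. I would next invoke the seventh identity of~\eqref{conv} with $i = 1, j = a$: the $(2,2)$-block computation in the normalized basis of $A_1, B_1, C_1$ yields $c_a + c_a^{tr} + f_a + f_a^{tr} = 0$, which combined with $h_a$ skew forces both $c_a$ and $f_a$ skew-symmetric individually. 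The sixth identity at $i = j = a$, together with $d_a^{tr} d_a = g_a^{tr} g_a$ from~\eqref{final}, then gives $c_a^{tr} c_a = f_a^{tr} f_a$; under skew-symmetry this is $c_a^2 = f_a^2$, and expanding $c_a^2 - f_a^2 = f_a h_a + h_a f_a + h_a^2 = 0$ shows that, if $\alpha_a \neq 0$, then the $2 \times 2$ block of $f_a$ matching the nonzero block of $h_a$ must equal $-(\alpha_a/2) J$ (with the off-diagonal blocks of $f_a$ forced to vanish).

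To kill this residual, I would apply the third identity of~\eqref{conv} at $i = j = a$, which asserts $A_a C_a B_a^{tr}$ is skew-symmetric. Its $(1,1)$-block reduces to $g_a f_a d_a^{tr}$ being skew; plugging in the explicit forms of $d_a$ and $g_a$ from~\eqref{x1-x4},~\eqref{x4-x1}---each supported in a single $2 \times 2$ block with invertible factors $U_a, W_a$ and possible $J$-twists---the skew-symmetry condition collapses to an equation of the form $U_a X_a W_a^{tr} = 0$ (or its $J$-twisted analogue), where $X_a$ is precisely the $2 \times 2$ block of $f_a$ matching the nonzero block of $h_a$. Invertibility of $U_a$, $W_a$, and $J$ yields $X_a = 0$, contradicting $X_a = -(\alpha_a/2)J$ from the preceding paragraph unless $\alpha_a = 0$. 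Hence $c_a = f_a$ for all $a$. Finally, the assertion ``$\delta_a$ is skew-symmetric'' follows directly from the fifth identity of~\eqref{conv} at $j = 1$: with $\Delta = 0$ and $\sigma = I/\sqrt{2}$, it reduces to $\sigma \delta_a \sigma$ being skew, whence $\delta_a$ is skew. The main obstacle is this last step---carefully tracking the block positions and $J$-twists uniformly across the four indices $a = 4, \dots, 7$ to verify $X_a = 0$ in each case.
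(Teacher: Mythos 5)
Your proposal is correct, but it reaches the conclusion by a genuinely different route than the paper. For $4\le a\le 7$ the paper does not work index by index: it applies the sixth identity of~\eqref{conv} to a generic direction in the span of $n_4,\dots,n_7$ to get $h(x)b(x)=0$ for $h(x):=x_1(c_4-f_4)+\cdots+x_4(c_7-f_7)$, then reuses the Koszul/regular-sequence machinery already set up for $d(x)$ to conclude that every row of $h(x)$ lies in the constant-coefficient span of the two vectors in~\eqref{xyzw}; skew-symmetry of $h(x)$ forces the diagonal entries to vanish identically, which kills all the coefficients at once, so $h(x)\equiv 0$. Your per-index argument replaces this with the block confinement from $b_a^{tr}h_a=0$ and~\eqref{EQQQ}, then the lower-right block of the seventh identity of~\eqref{conv} at $i=1$ (which indeed gives $c_a+c_a^{tr}+f_a+f_a^{tr}=0$, hence individual skewness), then $c_a^2=f_a^2$ via~\eqref{c-f} and the explicit $d_a,g_a$ from~\eqref{x1-x4}--\eqref{x4-x1}, and finally the third identity at $i=j=a$; I checked that this last step does close: the $(1,1)$-block of $A_aC_aB_a^{tr}$ is $\beta_af_ad_a^{tr}=-\sqrt{2}\,g_af_ad_a^{tr}$, which with your constrained $f_a$ is strictly block-lower-triangular with lower-left block an invertible multiple of $\alpha_a$, so its required skewness forces $\alpha_a=0$ uniformly in the four cases (the $J$-twists only rotate the invertible factor). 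For $1\le a\le 3$ the paper argues via the mirror point, observing that $c_a$ and $f_a$ are both assembled from the shared $b^{\#}$-blocks at $x^{\#}$, whereas your argument ($b_j^{tr}h_a=0$ for $j\ge 4$ together with the columns of $b_4,\dots,b_7$ spanning ${\mathbb R}^4$) is a direct kernel-intersection argument that avoids the conversion to $x^{\#}$ altogether; the derivation of skewness of $\delta_a$ from~\eqref{ee} with $\Delta=0$ coincides with the paper's. In short, the paper's proof is shorter because it recycles the polynomial identity and the span result~\eqref{xyzw}, while yours is more computational but self-contained at $x$, and its treatment of $a\le 3$ is arguably cleaner than the mirror-point step.
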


\begin{proof} Let us first handle the case when $4\leq a\leq 7$. 
We know $c_a-f_a$ is skew-symmetric by~\eqref{h} because the spectral data are $(\sigma,\Delta)=(I/\sqrt{2},0)$ now. Moreover,
$$
(c_a-f_a)^{tr}b_a=0
$$ 
by~\eqref{c-f}. Hence linear combinations of $c_a-f_a,4\leq a\leq 7,$ i.e.,
$$
h(x):=x_1(c_4-f_4)+\cdots+x_4(c_7-f_7),
$$ 
satisfies 
$$
h(x)b(x)=0
$$ 
and so the first row of $h(x)$ is a linear combination of the vectors in~\eqref{xyzw}. However, since $h(x)$ is skew-symmetric, the first component of the first row of $h(x)$ is zero. Consequently,
the entire first row of $h(x)$ is, and similarly, all rows of $h(x)$ are zero. That is, $c_a=f_a$ for all $4\leq a\leq 7$.

For $1\leq a\leq 3$, the first columns of $b_4,\cdots,b_7$ at $x$ are placed in order to form the first matrix $c_1^\#$ and $f_1^\#$ at $x^\#$, the second columns to form $c_2^\#$ and $f_2^\#$,  the third to form $c_3^\#$ and $f_3^\#$, and vice versa. It follws that $c_a=f_a,1\leq a\leq 3,$ because they are both generated by aligning the columns of  $b_4^\#,\cdots,b_7^\#$.

That $\delta_a$ is skew-symmetric follows from~\eqref{ee} and $\Delta=0$. Lastly,
$$d_a^{tr}d_a=g_a^{tr}g_a$$ follows from the second identity in~\eqref{c-f}.
\end{proof}

We are in a position to prove the classification theorem.

\begin{theorem} Let $(m_{+},m_{-})=(7,8)$. Assume the isoparametric hypersurface is not the one constructed by Ozeki and Takeuchi. 
Then the hypersurface is one of the two constructed by Ferus, Karcher, and M\"{u}nzner.
\end{theorem}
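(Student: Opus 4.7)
The plan is to complete the structural determination of the matrices in~\eqref{mtx} and then assemble the resulting information into a Clifford frame on $M_{-}$, from which the $Spin(9)$ extension machinery of~\cite{CCJ},~\cite{Ch} will identify $M$ with an FKM example. Concretely, I would first pin down the remaining unknowns $c_a=f_a$, $\beta_a,\gamma_a,\delta_a$ for $4\leq a\leq 7$ by systematically feeding the explicit forms~\eqref{EQQQ},~\eqref{x1-x4},~\eqref{x4-x1} and the just-established identities $\sigma=I/\sqrt{2}$, $\Delta=0$, $c_a=f_a$, $\delta_a$ skew-symmetric into the Ozeki--Takeuchi system~\eqref{conv}. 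The classification of orthogonal multiplications of type $[3,4,8]$ in~\cite{CW} should be applied symmetrically at $x$ and at the mirror point $x^{\#}$: once the $(b_a,c_a^{\#})$ blocks are rigidified in~\eqref{EQQQ} and~\eqref{NQE}, the same classification, transported via~\eqref{gOOd}--\eqref{gooD}, forces the $(\beta_a,\gamma_a,\delta_a)$ blocks into an analogous rigid pattern, with the two inequivalent orthogonal multiplications in~\cite{CW} tracking the $\pm$ choice in~\eqref{NQE}.

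Next, I would transport the shape operator data to the mirror point $x^{*}\in M_{-}$ via~\eqref{Good}. The block decomposition~\eqref{good}, combined with the first universal property (the common zero column of all $B_a$, from Corollary~\ref{corollary10}) and the second universal property (the $2$-dimensional common kernel of the $B_a^{tr}$), produces a splitting ${\mathbb R}^{32}={\mathbb R}^{16}\oplus{\mathbb R}^{16}$ on which the shape operators assemble into a representation of the Clifford algebra $C_7$, precisely mirroring the FKM prototype of Section~\ref{subsec}. The seven matrices $B_1,\ldots,B_7$ displayed there are the pattern one expects to recover (up to a global $O(8)$-change), and the $\pm$ dichotomy from~\eqref{NQE} should correspond to the two inequivalent such Clifford modules.

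Third, I would use this representation to define a smoothly varying $8$-sphere's worth of symmetric ambient endomorphisms whose restrictions to $M_{-}$ are intrinsic isometries; this is the Clifford frame. By the extension results of~\cite{CCJ},~\cite{Ch}, each such intrinsic isometry extends uniquely to an ambient isometry of $S^{31}$, and the resulting sphere generates a $Spin(9)$-action on ${\mathbb R}^{32}$ under which $M_{-}$ is invariant. This forces $M_{-}$ to be a Clifford--Stiefel manifold, hence $M$ is one of the two FKM examples with $(m_+,m_-)=(7,8)$.

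The main obstacle, I expect, will be arranging that the local Clifford frame constructed at a generic pair $(x,x^{\#})$ propagates coherently to a globally defined frame on $M_{-}$: the normalizations used throughout Section~\ref{sec7} are pointwise and depend on the choices of the pair $(n_0,n_1)$ and of a $4$-null trivialization of the normal bundle, so the sphere of isometries must be shown to be independent of those choices and to extend analytically across the (closed) Condition~A locus. A secondary subtlety is verifying that the $\pm$ dichotomy in~\eqref{NQE} matches the dichotomy between the two FKM examples, rather than producing both or neither; this should follow by comparing the invariants of the resulting $Spin(9)$-representations with those computed from~\cite{FKM} for the two homotheties of Clifford systems realizing the pair $(m_+,m_-)=(7,8)$.
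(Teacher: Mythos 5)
Your overall strategy---produce a Clifford frame on $M_{-}$ and invoke the extension machinery of \cite{CCJ}, \cite{Ch}---is the same as the paper's, but there is a genuine gap at its heart: you treat the Clifford frame as something that can be read off from the pointwise algebra of the second fundamental form (a $C_7$-module structure on ${\mathbb R}^{16}\oplus{\mathbb R}^{16}$ recovered ``up to an $O(8)$-change''), whereas the defining conditions~\eqref{cliff} include the first-order condition $\theta^i_j-\theta^{i'}_{j'}=\sum_k L^i_{jk}(\omega^k+\omega^{k'})$ on the connection forms, and it is precisely this condition that lets the leafwise intrinsic isometries extend to ambient isometries. Pointwise second-fundamental-form data alone cannot yield it: as the paper itself notes, one second fundamental form can carry several third fundamental forms, so ``recovering the FKM pattern'' of the $B_a$'s does not determine the example, and your proposed main obstacle (global coherence of the frame, analytic extension across the Condition~A locus) is not the missing ingredient. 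What the paper actually does is (i) recast the first three equations of~\eqref{cliff} at $(x,n)$ as $B_a=C_a$ via an identification $Q$ obtained from $d_a=g_a$ after the orthogonal row operations coming from~\eqref{x1-x4} and~\eqref{x4-x1}, together with $A_a$ and $A_a^\#$ skew-symmetric---which needs only the structural facts already proved ($\sigma=I/\sqrt{2}$, $\Delta=0$, $c_a=f_a$, $\delta_a$ skew, $\beta_a=\gamma_a^{tr}$ up to a sign change), not a full determination of the blocks; and (ii) verify the connection-form condition by reducing it, via \cite[Lemma 2]{Chiq} and the image of the map $H$ in~\eqref{H}, to the single index pair $(3,4)$---exactly the two rows annihilated by all $B_a^{tr}$, i.e.\ the second universal property---and then proving $\theta^3_4=\theta^{3'}_{4'}$ by the mirror-point argument with the spheres $S_{\pm}$, the $2$-plane bundles ${\mathcal P}_{\pm}$ cut out by $\cap_m \ker (B_m^{tr})$ and $\cap_m \ker (C_m^{tr})$, and the identification $d_a=g_a$, which produces an isometry carrying ${\mathcal P}_{+}$ to ${\mathcal P}_{-}$. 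Nothing in your proposal supplies this step, and without it the appeal to \cite{CCJ}, \cite{Ch} does not go through.

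Two secondary points. Your expectation that the $\pm$ dichotomy in~\eqref{NQE} must be matched with the two FKM examples is neither needed nor established: the paper never distinguishes the two FKM hypersurfaces, it only verifies the Clifford-frame characterization, which already forces the hypersurface to be of Ozeki--Takeuchi or Ferus--Karcher--M\"unzner type, hence FKM since the OT case is excluded by hypothesis. Likewise, your hope that the classification of \cite{CW}, applied symmetrically at $x$ and $x^{\#}$, will rigidify $\beta_a,\gamma_a,\delta_a$ into an explicit FKM pattern is not how the argument closes; those blocks are never pinned down explicitly, only their symmetry properties are, and that suffices.
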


\begin{proof} Referring to~\eqref{good}, we will show there is a {\bf Clifford frame}~\cite[(8.1)-(8.4), p. 28]{CCJ} on the unit normal bundle of $M_{-}$. 

Recall the tangent bundle ${\mathcal T}$ of the unit bundle ${\mathcal UN}$ of $M_{-}$ naturally splits into the vertical part ${\mathcal V}$ and and the horizontal part ${\mathcal H}$, and ${\mathcal H}$ further splits into three subspaces which, at $(x^*,n^*)\in{\mathcal UN}$ sitting over $x^*\in M_{-}$, are the horizontal lift of the three eigenspaces of the shape operator $S_{n^*}$ at $x^*$ with eigenvalues $0, 1, -1$, respectively, i.e.,
$$
{\mathcal T}={\mathcal V}\oplus {\mathcal E}_0^*\oplus {\mathcal E}_{+}^*\oplus {\mathcal E}_{-}^*,
$$
where the basis elements of ${\mathcal V}, {\mathcal E}_0^*, {\mathcal E}_{+}^*, {\mathcal E}_{-}^*$ are indexed by subscripts $\alpha, \mu, a, p, $ where $1\leq\alpha,\mu\leq 8,1\leq a, p\leq 7$, so that a typical one is denoted, respectively, by $e_\alpha,e_\mu,e_a,e_p$ in the corresponding range with dual frame $\omega^\alpha,\omega^\mu,\omega^a,\omega^p$ and connection forms $\theta^i_j$ with $i,j$ ranging over all possible indexes; for a specific index in a range, we will denote it by, e.g., $e_{\alpha=5}, \theta^{a=6}_{\mu=5},$ etc. Write
\begin{equation}\label{FF}
\theta^i_j=\sum_k F^i_{jk}\omega^k.
\end{equation}
We know~\cite[(2.9), p. 9]{CCJ} $F^i_{jk}=0$ whenever exactly two indexes fall in the same $\alpha,\mu,a$, or $p$ range. 

A Clifford frame is one on ${\mathcal T}$ 
that satisfies 
\begin{equation}\label{cliff}
\aligned
&A_\alpha^*=A_\mu^*,\\
&(a,\mu)\;\text{entry of }\; B_\alpha^*=-(a,\alpha)\;\text{entry of}\; B_\mu^*,\\
&(p,\mu)\;\text{entry of }\; C_\alpha^*=-(p,\alpha)\;\text{entry of}\; C_\mu^*,\\
&\theta^i_j-\theta^{i'}_{j'}=\sum_k L^i_{jk} (\omega^k+\omega^{k'})
\endaligned
\end{equation}
for some smooth functions $L^i_{jk}$, where $i,j,k$ are in the $\alpha$ index range and $i',j',k'$ are in the $\mu$ index range with the same respective index values (i.e., $i$ indicates $\alpha=i$ and $i'$ indicates $\mu=i$, etc.) 

It was shown in~\cite{CCJ} that a Clifford frame characterizes an isoparametric hypersurfaces constructed by Ozeki-Takeuchi and Ferus-Karcher-M\"{u}nzner. Moreover, it is shown in~\cite{Ch} that a Clifford frame is the same as a distribution ${\mathcal D}$ over ${\mathcal T}$ given by
$$
{\mathcal D}={\mathcal F}\oplus{\mathcal E}_{+}^*\oplus{\mathcal E}_{-}^*,
$$
where ${\mathcal F}\subset {\mathcal V}\oplus  {\mathcal E}_0^*$ is the graph of an orthogonal bundle map
$$
Q: {\mathcal E}_0^*\rightarrow {\mathcal V},
$$
where we define 
\begin{equation}\label{Qsign}
e_{\alpha=j}:=-Q(e_{\mu=j}),\quad 1\leq j\leq 8,
\end{equation}
to set up an orthonormal basis for ${\mathcal V}$
corresponding to a given one in ${\mathcal E}_0^*$.

Furthermore, in~\cite{Ch} it was shown that the first three equations in~\eqref{cliff} mean that the distribution ${\mathcal D}$ is involutive and each of its leaves induces an isometry of $M_{-}$ that extends, by the last equation of~\eqref{cliff} which means that the forms on its left hand side annihilate the distribution ${\mathcal D}$, to an ambient isometry so that the isoparametric hypersurface is one of the two constructed by Ferus, Karcher, and M\"{u}nzner. 

 Converted to the language of the unit bundle of $M_{+}$ at $(x,n)$ instead, where the shape operator $S_{n}$ has the eigenspaces $E_0,E_{+},E_{-}$,
the first three equations of~\eqref{cliff} say, in view of~\eqref{gOOd},~\eqref{gooD},~\eqref{Good},~\eqref{good}, that
there is an orthogonal map $Q$ that identifies the $j$th basis vector $e_{\mu=j}\in E_{-}$ with $-e_{\alpha=j}\in E_{+}$ so that  
\begin{equation}\label{EQ}
\aligned
&B_a=C_a,\; \forall a,\\
&A_a\;\text{is skew-symmetric},\;\forall a,\\
&A_a^\#\;\text{is skew-symmetric},\;\forall a.
\endaligned
\end{equation}

The first item of~\eqref{EQ} is true. Indeed~\eqref{x1-x4} and~\eqref{x4-x1} mean that if we perform orthogonal row operations by $U$ and $W$ we may assume 
$$
d_1(x)=g_1(x),\quad d_2(x)=g_2(x).
$$  
That is, if we define the bundle map $Q$ that swaps the first (last) two $\mu$-rows of $g(x)$ in $C_a$ with the last (first) two $\alpha$-rows of $d(x)$ in $B_a$ and leaves all remaining four rows of $B_a$ and $C_a$ unchanged, then $B_a=C_a$ via the identification $Q$ (i.e., we may assume $d_a=g_a$ via $Q$).

It suffices to establish the second item of~\eqref{EQ}. Now $\delta_a$ is skew-symmetric by the preceding corollary. $z_a,1\leq a\leq 3$ are skew-symmetric since $z_a,1\leq a\leq 3,$ generate the Clifford algebra $C_3$ by~\eqref{mtx}, while the upper left blocks of $A_a, 4\leq a\leq 7$ are zero. The nature of $Q$ does not change the skew-symmetry of these blocks.

Next, with $d_a=g_a$ via $Q$ in place, we derive from~\eqref{b} and~\eqref{d} (with $\Delta=0$) that we have
$\beta_a=\gamma_a^{tr}.$ 
However, we can now change the sign of the last four $\alpha$-rows and $\mu$-columns of $A_a$ without affecting the skew-symmetry of $\delta_a$ and the property $d_a=g_a,c_a=f_a$, so that now
$$
\beta_a=-\gamma_a^{tr},\quad 1\leq a\leq 7.
$$
That is, $A_a$ is now skew-symmetric for all $1\leq a\leq 7$ with this modified $Q$.

It remains to establish the last item of~\eqref{cliff}, knowing that the first three equations are true via $Q$. By~\cite[Lemma 2, p. 11]{Chiq}, the last item holds true if either $\alpha=i$ or $\alpha=j$ indexes a basis vector in the image of the map
\begin{equation}\label{H}
H:{\mathcal E}_{+}^*\oplus {\mathcal E}_{-}^*\rightarrow {\mathcal E}_0^*,\quad (e_a,e_p)\mapsto \sum_{\alpha}S^a_{\alpha p}e_\alpha,
\end{equation}
which is easily seen to be the direct sum of all $e_{\alpha=l}$ for $l\neq 3, 4$ (i.e., the 3rd and 4th rows of $B_a$ are zero for all $1\leq a\leq 7$). Thus, it suffices to show that the last item of~\eqref{cliff} is valid for $i=3,j=4$ in the $\alpha$-range.

The left hand side of the last equation in~\eqref{cliff} annihilates the vectors in ${\mathcal E}^*_{+}\oplus{\mathcal E}^*_{-}\subset {\mathcal D}$ because they are horizontal, so that, as said below~\eqref{FF}, $\theta^3_4$ and $\theta^{3'}_{4'}$ annihilate them since exactly $3$ and $4$ (respectively, $3'$ and $4'$) are in the same $\alpha$ (respectively, $\mu$) range. (It is understood that by $3$ we mean $\alpha=3$ and by $3'$ we mean $\mu=3$, etc.)

We show the left hand side of the last equation in~\eqref{cliff} annihilates ${\mathcal F}\subset{\mathcal D}$ as well. For
$$
v:=e_{l'}-e_l\in{\mathcal F},
$$ 
we calculate
\begin{equation}\label{4v}
\theta^3_4(v)=-\theta^3_4(e_l), \quad \theta^{3'}_{4'}(v)=\theta^{3'}_{4'}(e_{l'})
\end{equation}
again by what is said below~\eqref{FF}. 

Since the calculation is pointwise, we first look at the geometry before we proceed. For $x\in M_{+}$ and $n$ in the unit normal sphere to $M_{+}$ at $x$, the map
\begin{equation}\label{diffeo}
f:(x,n)\mapsto (x^*,n^*)=((x+n)/\sqrt{2},(x-n)/\sqrt{2})
\end{equation}
sets up a diffeomorphism between the normal bundles of $M_{+}$ and $M_{-}$. Fix a point $(x_0,n_0)$ in the unit normal bundle of $M_{+}$, consider two sets  
$$
S_{+}:=\{(x,n): x+n=x_0+n_0\}, \quad S_{-}:=\{(x,n):x-n=x_0-n_0 \}.
$$
$S_{\pm}$ are two 8-dimensional spheres. Indeed, taking derivative of $x\pm n=c$ with $c$ a constant, we have $dx\pm dn=0$, which means that a typical tangent space to $S_{\pm}$ is the eigenspace ${\mathcal E}_{\pm}$ at $(x,n)$, respectively. 

The diffeomorphism $f$ maps $S_{+}$ to a sphere whose tangent space at $(x_0^*,n_0^*)$ is ${\mathcal V}$, so that it is the fiber of the unit normal bundle of $M_{-}$ over $x_0^*$, and $f$ maps $S_{-}$ to a sphere whose tangent space at $(x_0^*,n_0^*)$ is the horizontal ${\mathcal E}_0^*$. Thus to calculate the quantities in~\eqref{4v}, it suffices to observe that~\eqref{H} gives us the information
$$
\dim(\bigcap_{a=1}^7\text{kernel}(B_a^{tr}))=2.
$$
This translates to $S_{+}$ to say that the tangent space to $S_{+}$ at $(x,n)$ is identified with $E_+$ of the second fundamental form $S_n$, in which there naturally sits a 2-dimensional plane that is the
intersection of all kernels of the $B_m^{tr}$-block of $S_m$ with $m$ perpendicular to $n$ at $x$, which form a 2-plane bundle ${\mathcal P}_{+}$ over $S_{+}$. By the same token there is a 2-plane bundle
${\mathcal P}_{-}$ over $S_{-}$ which comes from the intersection of all kernels of the $C_m^{tr}$-block of $S_m$ with $m$ perpendicular to $n$ at $x$. Now, the above fact that after swapping rows
we may assume $d_a=g_a,1\leq a\leq 7$, means that once we set up the coordinate system of the ambient Euclidean space by the eigenspace decomposition
$$
{\mathbb R}x\oplus{\mathbb R}n\oplus E_0\oplus E_{+}\oplus E_{-}
$$
of the shape operator $S_n$ at $x$ for $(x,n)\in S_{+}$, where the third and fourth rows of $B^{tr}_a=0$ for all $1\leq a\leq 7$, we may assume, after swapping the third and fourth rows with the first and second, that ${\mathcal P}_{+}$ and ${\mathcal P}_{-}$ are parametrized identically in the coordinates. That is, in the coordinates we can parametrize $S_{+}$ and $S_{-}$ via an isometry $\iota$ in which ${\mathcal P}_{+}$ is brought to ${\mathcal P}_{-}$. As a consequence, via the diffeomorphism $f$ in~\eqref{diffeo}, a local basis $(e_3,e_4)$ spanning ${\mathcal P}_{+}$ is converted to one around ${\mathcal V}$ at $(x_0^*,n_0^*)$, and local basis $(e_3',e_4')$ spanning ${\mathcal P}_{-}$ is converted to one on the sphere whose tangent space at $(x_0^*,n_0^*)$ is ${\mathcal E}^*_0$. Thus through the isometry $\iota$ we see that
$$
\theta^3_4=\langle de_3,e_4\rangle=\langle de_3',e_4'\rangle=\theta^{3'}_{4'},
$$
which gives~\eqref{4v}, remarking that there the extra sign is a result of the sign convention in our identification map $Q$ in~\eqref{Qsign}, whose choice is in agreement with that of an isoparametriic hypersurface constructed by Ferus, Karcher, and M\"{u}nzner. 

The four equations in~\eqref{cliff} are satisfied. Thus the isoparametric hypersurface is one of the two constructed by Ferus, Karcher, and M\"{u}nzner, if it is not the one constructed by Ozeki and Takeuchi.

\end{proof}

\vspace{10mm}

\begin{appendix}{\begin{center}APPENDIX I\end{center}}

We give certain codimension 2 estimates needed for imposing constraints on 1-, 2-, and 3-nullity in Section~\ref{constraints}.

\begin{lemma}\label{import} Consider\, ${\mathbb C}^{15}={\mathbb C}^8\oplus{\mathbb C}^7$
parametrized by $(x,z)$. Consider the homogeneous equations of degree $2$
$$
f_0:=\sum_{\alpha=1}^8 (x_\alpha)^2=0,\quad f_i:=\sum_{\alpha=1,p=1}^{8,7}\theta_{\alpha p}^i x_\alpha z_p=0,\quad i=1,2,3.
$$
Let $Z_k$ be the variety carved out by\, $0=f_0=\dots=f_k,0\leq k\leq 3$. Suppose
$f_1,f_2,f_3$ are linearly independent. Then $Z_k,0\leq k\leq 3,$ are
irreducible of codimension $k+1$.
For an $f_4$ of homogeneous degree $2$ linearly independent
from $f_0,f_1,f_2,f_3$, we have that $f_0,f_1,f_2,f_3,f_4$ form a regular sequence and so they carve out
a subvariety of codimension $5$.
\end{lemma}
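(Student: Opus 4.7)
The plan is to induct on $k$ for the irreducibility and dimension of $Z_k$, then deduce the regular-sequence statement about $f_4$ from primality of the ideal of $Z_3$. For the base case $k=0$, I would note that $V(f_0) \subset \mathbb{C}^8$ is the affine cone on the smooth projective quadric $\{\sum x_\alpha^2 = 0\} \subset \mathbb{P}^7$, hence normal, Cohen--Macaulay, and irreducible of codimension $1$; thus $Z_0 = V(f_0) \times \mathbb{C}^7$ is irreducible of codimension $1$ in $\mathbb{C}^{15}$.

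For the inductive step, I would write $f_i = x^{tr} M_i z$ with $M_i$ an $8 \times 7$ matrix and project $\pi : Z_k \to V(f_0)$ via $(x,z) \mapsto x$. The fiber over $x$ is the linear subspace $\ker \Phi_k(x) \subset \mathbb{C}^7$, where $\Phi_k(x)$ is the $k \times 7$ matrix whose $i$-th row is $x^{tr} M_i$. Linear independence of $f_1, \dots, f_k$ yields linear independence of $M_1, \dots, M_k$, so for every nonzero $c \in \mathbb{C}^k$ the combination $M_c := \sum c_i M_i$ is nonzero and $\ker(M_c^{tr}) \cap V(f_0)$ is a proper subvariety of $V(f_0)$. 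Stratifying $V(f_0)$ by the rank of $\Phi_k$ and estimating the incidence $\{(x,[c]) \in V(f_0) \times \mathbb{P}^{k-1} : M_c^{tr} x = 0\}$ should then show that each rank-$r$ stratum contributes at most $14-k$ to $\dim Z_k$, giving $\dim Z_k = 14-k$ as required.

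To upgrade the dimension count to irreducibility, I would invoke Serre's criterion $(R_1)+(S_2)$. Condition $(S_2)$ is automatic since $Z_k$ is a complete intersection of the correct codimension, hence Cohen--Macaulay. For $(R_1)$ I would show that the singular locus of $Z_k$, namely the locus where the $(k+1) \times 15$ Jacobian of $(f_0, \dots, f_k)$ drops rank, has codimension at least $2$ in $Z_k$: the contribution from $\nabla f_0 = 2x = 0$ reduces to $\{0\} \times \mathbb{C}^7$, of codimension far greater than $2$, while the remaining singular locus is controlled by the same rank-drop analysis of $\Phi_k$ that appears in the dimension estimate. Once $Z_k$ is normal it splits as a disjoint union of irreducible components; but $Z_k$ is an affine cone (all the $f_i$ being homogeneous of degree $2$), so it is connected and therefore irreducible.

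For the regular-sequence claim about $f_4$, I would observe that the irreducible complete intersection $Z_3$ has defining ideal $I(Z_3) = (f_0, f_1, f_2, f_3)$: this ideal is unmixed and generically reduced (the Jacobian attains rank $4$ at smooth points of $Z_3$), hence reduced and, combined with irreducibility, prime. Its degree-$2$ component is precisely the linear span of $f_0, \dots, f_3$, so any quadratic $f_4$ linearly independent from these cannot lie in $I(Z_3)$ and therefore does not vanish on $Z_3$. Being a nonzerodivisor on the irreducible variety $Z_3$, $f_4$ extends the regular sequence to carve out a subvariety of codimension $5$. The main obstacle will be the $(R_1)$ verification in the inductive step: showing in sufficient generality that the Jacobian rank-drop locus has codimension at least $2$ in $Z_k$, which is where the linear independence hypothesis on the $f_i$ must be fully exploited to rule out hidden irreducible components of $Z_k$.
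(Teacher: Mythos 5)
Your overall skeleton matches the paper's: a dimension count making $Z_k$ a complete intersection (hence Cohen--Macaulay, giving $(S_2)$), a codimension-two bound on the Jacobian-drop locus for $(R_1)$, Serre's criterion plus connectedness of the affine cone for irreducibility, and finally the observation that the degree-two graded piece of the prime ideal $(f_0,\dots,f_3)$ is exactly its linear span, so a linearly independent $f_4$ is a nonzerodivisor and extends the regular sequence. That last paragraph of your plan is sound and is essentially how the paper's standing machinery concludes.

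The gap is that the one step carrying the actual content---the dimension estimate in the inductive step and the matching $(R_1)$ bound---is only promised (``should then show'', ``is controlled by the same rank-drop analysis''), and the route you sketch for it does not close. Writing $f_i=x^{tr}M_iz$, linear independence only guarantees $M_c\neq 0$ for $c\neq 0$; such a combination may have rank one, so $\ker(M_c^{tr})$ can be $7$-dimensional, and your incidence count then bounds the rank-$r$ stratum of $\Phi_k$ inside $V(f_0)$ only by $6+r$, whereas you need $7-k+r$; so neither $\dim Z_k=14-k$ nor the codimension-two bound on the singular locus follows from the sketch. Indeed, taking $\theta^i=u\,v_i^{tr}$ (so $f_i=(u\cdot x)(v_i\cdot z)$, still linearly independent) puts the $13$-dimensional set $\{f_0=u\cdot x=0\}\times{\mathbb C}^7$ inside $Z_k$, which shows the low-rank strata cannot be tamed by a soft incidence argument from the stated hypotheses alone: one must work, as the paper does, with the kernels of the full pencil $c_0R_0+\cdots+c_kR_k$, where for $c_0\neq 0$ the invertible block coming from the Hessian of $f_0$ forces $x=-\Theta_k z$ and $\Theta_k^{tr}\Theta_k z=0$; this is the source of the paper's bound $6+k\le\dim Z_k-2$, and it is also where the genericity of $\Theta_k$ (the structure actually available in the application) enters. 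Note too that the Jacobian-drop locus is governed by kernels of this pencil, not by the rank of $\Phi_k(x)$ alone, since a rank drop can mix $\nabla f_0$ with the $\nabla f_i$; so your $(R_1)$ step requires the same pencil analysis. As written, the proposal defers precisely this estimate, so it does not yet constitute a proof.
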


\begin{proof} The singular set of $f_0$ consists of points of the form $(0,z)$.
Hence the codimension 2 estimate goes through for $Z_0$. Set
$$
R_0:=\begin{pmatrix}I&0\\0&0\end{pmatrix},\quad R_k:=\begin{pmatrix}
0&\theta_k\\\theta^{tr}_k&0\end{pmatrix}, k=1,2,3,
$$
where the identity matrix is of size 8-by-8 and $\theta_k$ is the 8-by-7 matrix
whose entries are $\theta_{\alpha p}^k$. As in~\eqref{sing}, we estimate the
dimension of the kernel of
$$
S:=c_0R_0+\cdots+c_kR_k
$$
with $[c_0;\cdots:c_k]\in{\mathbb C}P^k,k=1,2,3.$ For simplicity, we may assume
$c_0=1$. Then
$$
S:=\begin{pmatrix} I&\Theta_k:=\sum_{l=1}^kc_l\theta_l\\(\Theta_k)^{tr}&0\end{pmatrix},
$$
whose kernel elements $(x,z)^{tr}$ satisfies
$$
x+\Theta_k z=0,\quad (\Theta_k)^{tr}x=0.
$$
From this we see that
$$
(\Theta_k)^{tr}\Theta_k z=0,
$$
so that the dimension of $z$ is at most 6 for a generic choice of $[c_0:\cdots:c_k]$
(respectively, 7 for a nongeneric choice) because the independence of $p_1,p_2,p_3$ dictates that $\Theta_k$ is nonzero
for such a generic choice. Therefore, the fact that $x=-\Theta_k z$ implies that
the kernel dimension is at most 6 for a generic parameter $[c_0:\cdots:c_k]$
of dimension $k$. Hence the total dimension is at most $6+k$ (respectively,
$7+(k-1)=6+k$). On the other hand, $\dim(Z_k)-2\geq (15-k-1)-2=12-k$. Therefore,
the codimension 2 estimate goes through for  $Z_k,1\leq k\leq 3.$
\end{proof}

\begin{lemma}\label{import1} Consider\,
${\mathbb C}^{14}\simeq{\mathbb C}^7\oplus{\mathbb C}^7$
parametrized by $(x,z)$, and
consider the homogeneous equations of degree $2$
$$
f_0:=\sum_{\alpha=1}^7 (x_\alpha)^2=0,\quad
f_i:=\sum_{\alpha=1,p=1}^{7,7}\theta_{\alpha p}^i x_\alpha z_p+ z_7z_p\; {\rm terms}\;= 0                 
$$
for $i=1,2$. Let $Z_k$\, be the variety carved out by\,
$0=f_0=\cdots=f_k,0\leq k\leq 2$. Suppose\,
$\sum_{\alpha=1,p=1}^{7,6}\theta_{\alpha p}^i x_\alpha z_p,i=1,2,$
are linearly independent. Then $Z_k,0\leq k\leq 2,$ are
irreducible of codimension $k+1$.
For an $f_3$ of homogeneous degree $2$ linearly independent
from $f_0,f_1,f_2$, we have that $f_0,f_1,f_2,f_3$ form a regular sequence and so they carve out
a subvariety of codimension $4$.
\end{lemma}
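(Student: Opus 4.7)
The plan is to adapt verbatim the proof strategy of Lemma~\ref{import}, keeping careful track of the additional $z_7 z_p$ contributions. First I would dispose of $Z_0 = \{f_0 = 0\}$: it is the irreducible quadric in $\mathbb{C}^{14}$ with singular locus $\{x = 0\}$ of codimension 7, so the codimension 2 Jacobian estimate is trivial. For $k = 1, 2$, I would associate to the $f_i$ the $14 \times 14$ symmetric matrices
$$
R_0 = \begin{pmatrix} I_7 & 0 \\ 0 & 0 \end{pmatrix}, \qquad R_i = \begin{pmatrix} 0 & \theta_i/2 \\ \theta_i^{tr}/2 & \Phi_i \end{pmatrix}, \quad i = 1, 2,
$$
where $\theta_i$ is the $7 \times 7$ matrix of coefficients $\theta_{\alpha p}^i$ and $\Phi_i$ is the symmetric $7 \times 7$ matrix encoding the $z_7 z_p$ terms, hence nonzero only in its 7th row and 7th column. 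Writing $\Theta = \sum_{i \geq 1} c_i \theta_i$ and $\Phi = \sum_{i \geq 1} c_i \Phi_i$ for a parameter $\lambda = [c_0 : c_1 : c_2] \in \mathbb{C}P^k$, the singular subvariety $\mathscr{S}_\lambda$ is cut out by
$$
2 c_0 x + \Theta z = 0, \qquad \Theta^{tr} x + 2 \Phi z = 0.
$$

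Next I would estimate $\dim \mathscr{S}_\lambda$ exactly as in Lemma~\ref{import}. For generic $\lambda$ with $c_0 \neq 0$, solving the first equation for $x = -\Theta z/(2 c_0)$ and substituting reduces kernel membership to $(\Theta^{tr}\Theta - 4 c_0 \Phi) z = 0$. The hypothesis that the bilinear forms built from the first six columns of $\theta_i$ are linearly independent implies that for generic $[c_1 : c_2]$, the $7 \times 6$ submatrix $\Theta'$ (columns $1, \dots, 6$ of $\Theta$) is nonzero; since $\Phi$ is supported on the 7th row and column, the upper-left $6 \times 6$ principal block of $\Theta^{tr}\Theta - 4 c_0 \Phi$ coincides with the corresponding block of $\Theta^{tr}\Theta$, which inherits nontriviality from $\Theta'$. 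Hence $\mathrm{rank}(\Theta^{tr}\Theta - 4 c_0 \Phi) \geq 1$ generically in $\lambda$, giving $\dim \mathscr{S}_\lambda \leq 6$. On the codimension-1 stratum in $\mathbb{C}P^k$ where $c_0 = 0$ or the rank drops, the crude bound $\dim \mathscr{S}_\lambda \leq 7$ applies. Stratifying as in Lemma~\ref{import}, the total dimension of $\bigcup_\lambda (V_k \cap \mathscr{S}_\lambda)$ is bounded by $6 + k$ in the generic stratum and $7 + (k-1) = 6 + k$ in the non-generic one, while $\dim V_k \geq 14 - (k+1) = 13 - k$; the required codimension 2 estimate $6 + k \leq 11 - k$ holds for $k \leq 2$.

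Given the codimension 2 Jacobian estimate, each $V_k$ is a complete intersection (in particular Cohen-Macaulay) whose singular locus has codimension at least 2; by Serre's criterion $V_k$ is normal, hence reduced, and by induction on $k$ it is irreducible (each $f_k$, being linearly independent from $f_0, \ldots, f_{k-1}$ as a degree-2 form, does not vanish identically on the irreducible $V_{k-1}$). For the final assertion, reducedness of $V_2$ yields $I(V_2) = (f_0, f_1, f_2)$, whose degree-2 component is exactly $\mathbb{C}\langle f_0, f_1, f_2\rangle$; hence any degree-2 form $f_3$ linearly independent from these fails to vanish on the irreducible $V_2$, making $f_3$ a non-zero-divisor on the coordinate ring of $V_2$, so $f_0, f_1, f_2, f_3$ form a regular sequence carving out codimension 4.

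The main obstacle I expect is the subtlety of the kernel estimate caused by the $\Phi$ term: in Lemma~\ref{import} the identity $\Theta \neq 0 \Rightarrow \dim \ker \leq 6$ is immediate, but here the rank-2 perturbation by $\Phi$ could in principle conspire with $\Theta^{tr}\Theta$ to enlarge the kernel for special $\lambda$. The saving feature is that $\Phi$ touches only the 7th row and column while the hypothesis forces a generic contribution to the $6 \times 6$ block where $\Phi$ is absent, so the rank cannot drop uniformly across the 2-parameter pencil—this is the step that must be argued carefully to rule out a locus of high-dimensional kernels of codimension too small in $\mathbb{C}P^k$.
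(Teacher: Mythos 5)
Your proposal is essentially the paper's own argument: the same association of the quadrics to the pencil of symmetric matrices $R_0,R_1,R_2$ with the $z_7z_p$ terms isolated in a matrix supported on the last row and column, the same reduction $x=-\Theta z$ leading to a $7\times 7$ matrix on $z$ whose upper-left $6\times 6$ block is untouched by that perturbation and is nonzero by the linear-independence hypothesis, and the same stratified count $6+k$ versus $7+(k-1)$ against $\dim Z_k\geq 13-k$. The only difference is that you spell out the standard Serre's-criterion/complete-intersection deduction of irreducibility and the regular-sequence claim, which the paper leaves to its previously developed machinery.
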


\begin{proof} The singular set of $f_0$ consists of points of the form $(0,z)$.
Hence the codimension 2 estimate goes through for $V_0$. Set
$$
R_0:=\begin{pmatrix}I&0\\0&0\end{pmatrix},\quad R_k:=\begin{pmatrix}
0&\theta_k\\\theta^{tr}_k&\tau_k\end{pmatrix},\quad k=1,2,
$$
where $I$ is 7-by-7, the 7-by-7 $\theta_k$ is defined similarly as in the preceding lemma, and
$\tau_k$ is a 7 by 7 symmetric matrix whose only nonzero row and column are the last one
corresponding to the coefficients of the $z_7z_p$ terms of $f_k$.
Again we estimate the
dimension of the kernel of
$$
S:=c_0R_0+\cdots+c_kR_k
$$
with $[c_0;\cdots:c_k]\in{\mathbb C}P^k,k=1,2,3.$ For simplicity, we may assume
$c_0=1$. Then
$$
S:=\begin{pmatrix} I&\Theta_k:=\sum_{l=1}^kc_l\theta_l\\(\Theta_k)^{tr}&\Pi_k:=\sum_l c_l\tau_l\end{pmatrix},
$$
whose kernel elements $(x,z)^{tr}$ satisfies
$$
x+\Theta_k z=0,\quad (\Theta_k)^{tr}x+\Pi_k z=0.
$$
From this we see that
$$
((\Theta_k)^{tr}\Theta_k+\Pi_k) z=0,
$$
so that the dimension of $z$ is at most 6 for a generic choice of $[c_0:\cdots:c_k]$
(respectively, 7 for a nongeneric choice) because the independence of
$\sum_{\alpha=1,p=1}^{7,6}\theta_{\alpha p}^i x_\alpha z_p,i=1,2,$
dictates that the upper left 6-by-6 block of $(\Theta_k)^{tr}\Theta_k$ is nonzero
for such a generic choice. Therefore, the fact that $x=-\Theta_k z$ implies that
the kernel dimension is at most 6 for a generic parameter $[c_0:\cdots:c_k]$
of dimension $k$. Hence the total dimension is at most $6+k$ (respectively,
$7+(k-1)=6+k$). On the other hand, $\dim(Z_k)-2\geq (14-k-1)-2=11-k$. Therefore,
the codimension 2 estimate goes through for  $Z_k,0\leq k\leq 2.$
\end{proof}

\begin{lemma}\label{import2} By the same token, if over\,
${\mathbb C}^{13}={\mathbb C}^6\oplus{\mathbb C}^7$
we are given
$$
f_0:=\sum_{\alpha=1}^6 (x_\alpha)^2=0,\quad
f_i:=\sum_{\alpha=1,p=1}^{6,7}\theta_{\alpha p}^i x_\alpha z_p
+z_6z_p\;\text{\rm terms}+ z_7z_p\; {\rm terms}\;= 0,
$$
$1\leq i\leq 2$. Let $Z_k$ be the variety carved out by $0=f_0=\cdots=f_k,0\leq k\leq 2$. Suppose\; $\sum_{\alpha=1,p=1}^{6,5}\theta_{\alpha p}^i x_\alpha z_p,i=1,2,$ are linearly independent,
then the codimension $2$ estimate goes through for $k\leq 2$, and so
$Z_k,k\leq 2,$ are irreducible of codimension $k+1$.
For an $f_3$ of homogeneous degree $2$ linearly independent
from $f_0,f_1,f_2$, we have that $f_0,f_1,f_2,f_3$ form a regular sequence and so they carve out
a subvariety of codimension $4$.

\end{lemma}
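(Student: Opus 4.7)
The plan is to adapt the matrix-pencil argument from Lemmas~\ref{import} and~\ref{import1} to the present dimensions. I set $R_0 := \mathrm{diag}(I_6, 0_7)$ with $I_6$ the $6\times 6$ identity, and for $k=1,2$,
\[
R_k := \begin{pmatrix} 0 & \theta_k \\ \theta_k^{tr} & \tau_k \end{pmatrix},
\]
where $\theta_k$ is the $6 \times 7$ matrix with entries $\theta_{\alpha p}^k$ and $\tau_k$ is the $7 \times 7$ symmetric matrix whose only possibly nonzero entries occupy the last two rows and columns, encoding the $z_6 z_p$ and $z_7 z_p$ terms. The codimension 2 estimate will hinge on the size of the singular locus of the pencil $S := c_0 R_0 + c_1 R_1 + c_2 R_2$ as $[c_0:c_1:c_2]$ sweeps out $\mathbb{C}P^k$.

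Normalizing $c_0 = 1$, the kernel equations for $S$ reduce to $x = -\Theta_k z$ and $(\Pi_k - \Theta_k^{tr}\Theta_k)\,z = 0$, where $\Theta_k = \sum_{l=1}^{k} c_l \theta_l$ and $\Pi_k = \sum_{l=1}^{k} c_l \tau_l$. The key structural point is that the upper-left $5 \times 5$ block of $\Pi_k$ vanishes by construction, so this block of $\Pi_k - \Theta_k^{tr}\Theta_k$ coincides with the corresponding block of $-\Theta_k^{tr}\Theta_k$. The linear independence hypothesis on $\sum_{\alpha=1,p=1}^{6,5}\theta^i_{\alpha p}x_\alpha z_p$, $i = 1, 2$, will force some entry of this upper-left block to be a nontrivial polynomial in the $c_l$, so that for generic $[c_0:c_1:c_2]$ the matrix $\Pi_k - \Theta_k^{tr}\Theta_k$ has rank at least $1$, giving $\dim \ker S \leq 6$. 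On the codimension $\geq 1$ locus where the rank drops further, $\dim \ker S \leq 7$. Accounting for the $k$-dimensional parameter in either case yields $\dim \bigcup_\lambda \mathscr{S}_\lambda \leq 6 + k$.

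Comparing with the complete-intersection lower bound $\dim Z_k \geq 12 - k$, the codimension 2 estimate $\dim J_k \leq \dim Z_k - 2$ becomes $6 + k \leq 10 - k$, i.e., $k \leq 2$, which is precisely the asserted range. With the codimension 2 estimate in place for $k = 0, 1, 2$, the standard combination of the Cohen--Macaulay property (from a regular sequence, established inductively), Serre's criterion for normality, and the conical structure of $Z_k$ (for connectedness) will then deliver irreducibility of $Z_k$ throughout this range.

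For the extension to $f_3$: once $Z_2$ is integral and $f_0, f_1, f_2$ is a regular sequence, it suffices to verify that $f_3$ does not vanish identically on $Z_2$, for then $f_3$ is a nonzerodivisor on $\mathcal{O}(Z_2)$; this will follow from the linear independence of $f_3$ from $f_0, f_1, f_2$, exactly as in Lemmas~\ref{import} and~\ref{import1}. The hardest step will be the tight case $k = 2$, where the bound $6 + k \leq 10 - k$ has no slack; pinning down that the upper-left $5 \times 5$ block of $\Theta_k^{tr}\Theta_k$ is not an identically vanishing polynomial in $(c_1, c_2)$ is the essential content of the independence hypothesis and the heart of the argument.
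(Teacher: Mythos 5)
Your proposal is correct and follows essentially the same route as the paper: the paper proves this lemma "by the same token" as Lemmas~\ref{import} and~\ref{import1}, i.e., via the same pencil $S=c_0R_0+c_1R_1+c_2R_2$ with $\tau_k$ supported in the last two rows and columns, the kernel bound $\dim\ker S\leq 6$ for generic parameters (coming from the independence hypothesis making the upper-left block of $\Theta_k^{tr}\Theta_k$ nontrivial) versus $\leq 7$ on the codimension-one locus, and the comparison $6+k\leq 10-k$ against the complete-intersection bound $\dim Z_k\geq 12-k$. Your treatment of irreducibility and of $f_3$ as a nonzerodivisor on the integral $Z_2$ likewise matches the paper's standard machinery.
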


\end{appendix}

\end{document}